\documentclass[a4paper,12pt,reqno]{amsart}
\usepackage[english]{babel}
\usepackage{amsmath}
\usepackage{amscd}
\usepackage{amsgen}
\usepackage{amsfonts}
\usepackage{amssymb}
\usepackage{amsthm}
\usepackage{comment}
\usepackage{url}

\setlength{\textheight}{9.1in} \setlength{\textwidth}{6.3in}
\setlength{\oddsidemargin}{0.0in} \setlength{\evensidemargin}{0.0in}

\setlength{\topmargin}{0.2in}
\setlength{\headheight}{0.2in}
\setlength{\parindent}{0.2in}

\catcode`\@=11
\@namedef{subjclassname@2010}{%
\textup{2010} Mathematics Subject Classification} \catcode`\@=12

\newtheorem{theorem}{Theorem}
\newtheorem*{theorem*}{Theorem}

\newcounter{prop}
\newtheorem{prop}[theorem]{Proposition}

\newtheorem*{prop*}{Proposition}

\newtheorem{thm}{Theorem}
\newtheorem{lem}[theorem]{Lemma}
\newtheorem{cor}[theorem]{Corollary}

\newtheorem{defn}[theorem]{Definition}
\newtheorem{remark}[theorem]{Remark}
\newtheorem{example}[theorem]{Example}

\newcommand{\R}{\mathbb{R}}
\newcommand{\C}{\mathbb{C}}
\newcommand{\N}{\mathbb{N}}
\newcommand{\Z}{\mathbb{Z}}
\newcommand{\HS}{\mathbb{H}}
\renewcommand{\P}{\mathcal{P}}

\newcommand{\Ric}{\operatorname{Ric}}
\newcommand{\id}{\operatorname{Id}}
\newcommand{\tr}{\operatorname{tr}}
\newcommand{\hyp}{\operatorname{hyp}}
\newcommand{\Conf}{\operatorname{Conf}}
\newcommand{\ad}{\operatorname{ad}}

\newcommand{\abs}[1]{\lvert#1\rvert}
\newcommand{\grad}{\text{grad}}

\newcommand{\G}{{\mathcal G}}

\newcommand{\Res}{\operatorname{Res}}
\newcommand{\ID}{(I\!\cdot\!D)}
\newcommand{\IDD}{I\!\cdot\!D}

\def\S{{\Sigma}}            
\def\Sc{{\mathcal S}}           
\def\T{{\mathcal T}}            
\def\H{{\mathcal H}}            
\def\D{{\mathcal D}}            
\def\M{{\mathcal M}}            
\def\U{{\mathcal U}}            

\def\Rho{{\sf P}}               
\def\J{{\sf J}}                 

\def\m{m}
\def\st{\stackrel{\text{def}}{=}}

\numberwithin{theorem}{section} \numberwithin{equation}{section}
\numberwithin{prop}{section}

\begin{document}

\title{Shift operators, residue families and degenerate Laplacians}
\author[Andreas Juhl and Bent {\O}rsted]{Andreas Juhl and Bent {\O}rsted}

\thanks{This paper would not be without the major ideas and crucial
contributions of Matthias Fischmann; we thank him heartily.}

\address{Department of Mathematics of {\AA}rhus University, Ny Munkegade 118, 8000
{\AA}rhus, Denmark}


\email{ajuhl@math.hu-berlin.de}
\email{orsted@math.au.dk}

\keywords{Poincar\'e metrics, ambient metrics, GJMS operators, symmetry
breaking operators, residue families, shift operators, $Q$-curvature}

\allowdisplaybreaks


\begin{abstract}
In this paper, we introduce new aspects in conformal geometry of some very natural
second-order differential operators. These operators are termed shift operators. In
the flat space, they are intertwining operators which are closely related to
symmetry breaking differential operators. In the curved case, they are closely
connected with ideas of holography and the works of Fefferman-Graham, Gover-Waldron
and one of the authors. In particular, we obtain an alternative description of the
so-called residue families in conformal geometry in terms of compositions of shift
operators. This relation allows easy new proofs of some of their basic properties.
In addition, we derive new holographic formulas for $Q$-curvatures in even
dimension. Since these turn out to be equivalent to earlier holographic formulas,
the novelty here is their conceptually very natural proof. The overall discussion
leads to a unification of constructions in representation theory and conformal
geometry.
\end{abstract}

\subjclass[2010]{Primary 35J30 53A30 53B20; Secondary 35Q76 53C25 58J50}

\maketitle

\begin{center} \today \end{center}

\tableofcontents

\section{Introduction and formulation of the main results}\label{intro}

Conformal differential geometry has seen spectacular developments in recent
years, both from a perspective of pure mathematics, and from a mathematical
physics point of view. The construction of ambient metrics and Poincar\'e
metrics by Fefferman and Graham \cite{FG} gave rise to many important
applications and fundamental insights. This is closely connected to the idea of
holography.

Attempts to extend these ideas to a conformal submanifold theory were one
source for the notion of symmetry breaking operators. This recent notion in
representation theory is central in studies of, for instance, the interplay
between the representation theory of the conformal group (the M\"{o}bius group) of
Euclidian space and the corresponding group for a hyperplane. In particular, it
plays a basic role in the study of branching laws of representations. The works
\cite{koss, KS1, KS2, KKP, FJS, MO} reflect recent progress in this area.
Curved analogs of symmetry breaking operators in conformal geometry deal with
conformally covariant differential operators $C^\infty(X) \to C^\infty(M)$,
where $M$ is a hypersurface of a Riemannian manifold $X$ (see \cite{J1} and
references therein). Residue families (introduced in \cite{J1}) are curved
versions of symmetry breaking operators which are defined in a setting where
$X$ is a tubular neighborhood of $M$ and where the metric on $X$ is determined
by the metric on $M$. For recent substantial progress in the general case we
refer to \cite{GP}.

In this paper, we shall develop a theory of some second-order differential
operators, originally found via representation theory as {\em shift operators}
between symmetry breaking operators. These operators turn out to be very
natural and admit generalizations within a framework defined by Riemannian
metrics. Remarkably, these generalizations did appear in earlier work from a
number of different perspectives, in particular from the point of view of
tractor calculus, a powerful tool in conformal geometry.

Shift operators recently appeared in the theory of symmetry breaking operators.
The latter operators are generalizations of Knapp-Stein intertwining operators
which intertwine principal series representations of semi-simple Lie group.
Symmetry breaking operators map between functional spaces on a given flag
variety to functional spaces on a subvariety and are equivariant only with
respect to the symmetry group of the subvariety. This loss of symmetry is the
origin of the notion. A typical situation is that of the round sphere $S^{n+1}$
with an equatorially embedded subsphere $S^n \hookrightarrow S^{n+1}$. The
non-compact model of that situation is a standard embedding $\R^n
\hookrightarrow \R^{n+1}$. In these cases, the relevant groups are the
conformal groups of the respective submanifolds. Conformal symmetry breaking
{\em differential} operators acting on functions in that setting are of
particular importance for conformal differential geometry \cite{J1}.

Shift operators shift the spectral parameter in the distributional Schwartz
kernels of conformal symmetry breaking operators. Such results in the setting
$\R^n \hookrightarrow \R^{n+1}$ first appeared in \cite{FOS} and will be
recalled in Section \ref{FlatShiftOperator}. The basic shift operator in that
theory is given by the $1$-parameter family \cite[(3.5)]{FOS}
\begin{equation}\label{flat-shift}
    P(\lambda) = r \Delta - (2\lambda-n-3) \partial_r: C^\infty(\R^{n+1}) \to
    C^\infty(\R^{n+1}), \; \lambda \in \C
\end{equation}
of second-order differential operators on $\R^{n+1}$. Here $\Delta$ denotes the
non-positive Laplacian of the flat metric on the space $\R^{n+1}$ with
coordinates $(r,x)$. We regard $r$ as a defining function of the subspace
$\R^n$. For any $\lambda \in \C$, the operator $P(\lambda)$ is equivariant with
respect to principal series representations restricted to the conformal group
$\Conf(\R^n)$ of the subspace $\R^n$ with the flat metric regarded as the
subgroup of the conformal group $\Conf(\R^{n+1})$ of $\R^{n+1}$ leaving $\R^n$
invariant. More precisely, $P(\lambda)$ satisfies the intertwining relation
\begin{equation}\label{flat-shift-equivariance}
    \left( \frac{\gamma_*(r)}{r}\right)^{n-\lambda+2} \circ \gamma_* \circ P(\lambda)
    = P(\lambda) \circ \left( \frac{\gamma_*(r)}{r}\right)^{n-\lambda+1} \circ \gamma_*
\end{equation}
for all $\gamma \in \Conf(\R^n) \subset \Conf(\R^{n+1})$. Here $\gamma_* =
(\gamma^{-1})^*$ denotes the push-forward operator on functions induced by
$\gamma$.

In the present paper, we generalize these results to a framework defined by
Riemannian metrics and discuss some applications. One of the main applications
concerns the residue families of \cite{J1}. As noted above, they can be
regarded as curved analogs of symmetry breaking differential operators. More
precisely, residue families are $1$-parameter families of conformally covariant
differential operators
\begin{equation}\label{res-f}
    D_N^{res}(h;\lambda): C^\infty(M_+) \to C^\infty(M), \; \lambda \in \C
\end{equation}
of order $N \in \N$. These are defined in the following setting. We consider a
general Riemannian manifold $(M,h)$. Let $M_+$ be an open neighborhood of
$\{0\} \times M$ in $[0,\infty) \times M$. On the open interior $M_+^\circ =
(0,\varepsilon) \times M$ of $M_+$, let $g_+ = r^{-2}(dr^2+h_r)$ be an even
Poincar\'e metric in normal form relative to $h$. Here $h_r$ is a $1$-parameter
family of metrics on $M$ with $h_0=h$. The metric $\bar{g} = dr^2+h_r$ is a
conformal compactification of $g_+$. The relevant concepts were developed in
\cite{FG} and will be recalled in Section \ref{prel}. Although the Poincar\'e
metric $g_+$ is not completely determined by the metric $h$, residue families
only depend on the Taylor coefficients of the family $h_r$ at $r=0$ which are
uniquely determined by $h$. More precisely, the even-order family
$D_{2N}^{res}(h;\lambda)$ involves $2N$ derivatives by the variable $r$ and
depends on the Taylor coefficients of $h_r$ of order $\le 2N$. The conformal
covariance of residue families describes their behavior under conformal changes
$h \to e^{2\varphi} h$ of the metric on the submanifold $M \hookrightarrow M_+
= [0,\varepsilon) \times M$.

Our generalizations of the shift operator $P(\lambda)$ are differential
operators which act on smooth functions on $M_+$ and are defined in terms of an
even Poincar\'e metric $g_+$ on $M_+^\circ$. In fact, we define a curved
version of the shift operator $P(\lambda)$ by the formula\footnote{The shift by
$2$ in the parameter $\lambda$ is a matter of conventions.}
\begin{equation}\label{shift-curved}
    S(g_+;\lambda) = r \Delta_{\bar{g}} - (2\lambda\!-\!n\!+\!1) \partial_r
    - \frac{1}{2}(\lambda\!-\!n\!+\!1) \tr(h_r^{-1}\dot{h}_r).
\end{equation}
Here the dot denotes derivatives with respect to $r$ and $\bar{g} = r^2 g_+$.
This definition can also be written in the form
\begin{equation*}\label{shift-curved-v}
   S(g_+;\lambda) = r\Delta_{\bar{g}} - (2\lambda\!-\!n\!+\!1) \partial_r
   - (\lambda\!-\!n\!+\!1) \dot{v}(r)/v(r),
\end{equation*}
where the function $v(r,\cdot) \in C^\infty(M)$ is defined by the relation
$dvol(h_r) = v(r) dvol(h)$ of volume forms. We note that, in contrast to
residue families, the shift operators are not defined only by the Taylor
coefficients of $h_r$ at $r=0$.\footnote{However, in the analytic category, the
family $h_r$ is completely determined by $h$.}

The operator $S(g_+;\lambda)$ is a second-order differential operator which
degenerates for $r=0$, i.e., on the submanifold $M$. Theorem \ref{CurvedBS}
establishes the shift property of $S(g_+;\lambda)$. This property describes its
action on functions of the form $r^{\lambda} u \in C^\infty(M_+^\circ)$, where
$u$ is an eigenfunction of the Laplacian $\Delta_{g_+}$ of the Poincar\'e
metric $g_+$ on $M_+^\circ$.

The following result describes the behavior of $S(g_+;\lambda)$ under conformal
changes of the boundary metric $h$ (Proposition \ref{ConformalCovarianceP}).

\begin{thm}\label{TA} Assume that $(M^n,h)$ is a manifold of dimension $n$.
Let $\hat{h} = e^{2\varphi}h$ be a metric in the conformal class of $h$. Let
$g_+$ be an even Poincar\'e metric in normal form relative to $h$ on
$M_+^\circ$. Let $\kappa$ be a diffeomorphism of $M_+$ which restricts to the
identity on $M$ and for which the Poincar\'e metric $\hat{g}_+ = \kappa^*
(g_+)$ is in normal form relative to $\hat{h}$. In these terms, we have
\begin{equation}\label{CCS}
   S(\hat{g}_+;\lambda) = \kappa^* \circ
   \left(\frac{\kappa_*(r)}{r}\right)^{\lambda-n} \circ S(g_+;\lambda) \circ
   \left(\frac{\kappa_*(r)}{r}\right)^{n-\lambda-1} \circ \kappa_*.
\end{equation}
\end{thm}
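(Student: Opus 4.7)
My strategy is to reformulate $S(g_+;\lambda)$ so that its dependence on the Poincar\'e structure is packaged entirely inside the Poincar\'e Laplacian $\Delta_{g_+}$, and then to exploit the fact that $\Delta_{g_+}$ transforms tautologically under the isometry $\kappa:(M_+^{\circ},\hat g_+)\to(M_+^{\circ},g_+)$ which is guaranteed by $\hat g_+=\kappa^{*}g_+$. Concretely, I would first establish the operator identity
\begin{equation*}
S(g_+;\lambda) \;=\; r^{\lambda-n}\circ \Delta_{g_+}\circ r^{n-1-\lambda} \;-\; (\lambda+1)(\lambda-n+1)\,r^{-1}.
\end{equation*}
To prove this, I apply the conformal change formula $\Delta_{g_+}=r^{2}\Delta_{\bar g}-(n-1)r\partial_r$ (valid because $g_+=r^{-2}\bar g$ on the $(n+1)$-dimensional $M_+^{\circ}$) together with the coordinate expression $\Delta_{\bar g}=\partial_r^{2}+(\dot v/v)\partial_r+\Delta_{h_r}$, and expand $\Delta_{g_+}(r^{\mu}u)$ by the product rule. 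Collecting powers of $r$ gives
\begin{equation*}
\Delta_{g_+}(r^{\mu}u) \;=\; \mu(\mu-n)\,r^{\mu} u \;+\; r^{\mu+1}\bigl[\,r\Delta_{\bar g}u + (2\mu-n+1)\partial_r u + \mu(\dot v/v) u\,\bigr];
\end{equation*}
the choice $\mu=n-1-\lambda$ makes the bracket precisely $S(g_+;\lambda)u$ (matching the coefficients against \eqref{shift-curved}), and rearrangement yields the claimed identity.

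Once this reformulation is in hand, the same identity applied to $\hat g_+$ gives the analogous expression with $\Delta_{\hat g_+}$ in place of $\Delta_{g_+}$. Set $\Omega := \kappa_{*}(r)/r$; this is smooth and positive on $M_+$ because $\kappa$ restricts to the identity on $\{r=0\}$, so $\kappa_{*}r$ is a defining function of $M$ of the same order as $r$. Substituting the reformulation into the right-hand side of \eqref{CCS} rewrites it as
\begin{equation*}
\kappa^{*}\,\Omega^{\lambda-n}\bigl[\,r^{\lambda-n}\Delta_{g_+}r^{n-1-\lambda} - (\lambda+1)(\lambda-n+1)r^{-1}\,\bigr]\Omega^{n-\lambda-1}\,\kappa_{*}.
\end{equation*}
Since multiplication operators mutually commute and $\Omega r=\kappa_{*}r$ by definition, the factors regroup as $(\kappa_{*}r)^{\lambda-n}$ on the left of $\Delta_{g_+}$, $(\kappa_{*}r)^{n-1-\lambda}$ on the right, and $(\kappa_{*}r)^{-1}$ in the multiplicative term. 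Inserting $\kappa_{*}\kappa^{*}=\mathrm{id}$ between the three resulting factors and using the elementary identities $\kappa^{*}\circ M_{f}\circ\kappa_{*}=M_{\kappa^{*}f}$, $\kappa^{*}(\kappa_{*}r)=r$, and $\Delta_{\hat g_+}=\kappa^{*}\Delta_{g_+}\kappa_{*}$ (from $\kappa$ being an isometry between the two Poincar\'e metrics) collapses the expression to $r^{\lambda-n}\Delta_{\hat g_+}r^{n-1-\lambda}-(\lambda+1)(\lambda-n+1)r^{-1}$, which is $S(\hat g_+;\lambda)$ by the reformulation applied to $\hat g_+$.

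The one non-formal step is the algebraic calculation of $\Delta_{g_+}(r^{\mu}u)$ underlying the reformulation; everything else is pure operator bookkeeping. The conceptual advantage of this route over a direct brute-force comparison is that the two lower-order terms $\partial_r$ and $\tr(h_r^{-1}\dot h_r)=2\dot v/v$ in $S(g_+;\lambda)$ never need to be transformed individually under $\kappa$ --- their combined behavior is absorbed into the manifest transformation law of $\Delta_{g_+}$. I expect no serious obstacle: once the reformulation is written down, the conformal covariance is essentially forced.
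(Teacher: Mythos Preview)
Your proof is correct and follows essentially the same approach as the paper. The paper establishes the same reformulation (recorded as \eqref{NewForm}, namely $S(g_+;\lambda)=r^{\lambda-n}(\Delta_{g_+}+(\lambda+1)(n-\lambda-1))r^{n-\lambda-1}$, which is equivalent to yours) and then carries out the identical bookkeeping with $\Delta_{\hat g_+}=\kappa^*\Delta_{g_+}\kappa_*$ and the multiplicative factors; the only cosmetic difference is that the paper derives the reformulation via the intermediate operator $\D_\lambda(g_+)$ rather than by directly expanding $\Delta_{g_+}(r^\mu u)$.
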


This result may be regarded as a version of conformal covariance. Although the
transformation law \eqref{CCS} formally resembles the equivariance property
\eqref{flat-shift-equivariance}, the former law is {\em not} a generalization
of the latter one. In fact, the diffeomorphisms $\kappa$ should not be confused
with the conformal maps $\gamma$: $\kappa$ leaves the submanifold $M$ pointwise
fixed. However, the formal similarity between the intertwining property
\eqref{flat-shift-equivariance} and the conformal transformation law
\eqref{CCS} can be explained by recognizing $P(\lambda)$ and $S(g_+;\lambda)$
(for Einstein $g_+$) both as special cases of the degenerate Laplacian $\IDD$
introduced in \cite{GW} (this concept will be recalled in Section
\ref{Laplace-deg}). Indeed, we note that
$$
   P(\lambda) = S(g_{\hyp};\lambda\!-\!2)
$$
and
\begin{equation*}
    S(g_{\hyp};\lambda) = \ID[r^2 g_{\hyp};r,\lambda\!-\!n\!+\!1] \quad \mbox{and}
    \quad S(g_+;\lambda) = - \ID[r^2 g_+;r,\lambda\!-\!n\!+\!1],
\end{equation*}
where $g_{\hyp}$ denotes the hyperbolic metric in the upper half-space and
$g_+$ is Einstein (see \eqref{PID} and \eqref{SandID}). Since $\gamma$
preserves the hyperbolic metric $g_{\hyp}$, we have
$$
   \gamma^*(r^2 g_{\hyp}) = \left(\frac{\gamma^*(r)}{r}\right)^2 (r^2 g_{\hyp})
$$
and the conformal transformation law for $\IDD$ (Proposition \ref{CTDL})
implies
\begin{align*}
    & \left( \frac{r}{\gamma^*(r)}\right)^{\lambda-n-2} \circ \gamma^* \circ P(\lambda)
    \circ \gamma_* \circ \left( \frac{r}{\gamma^*(r)}\right)^{n-\lambda+1} \notag \\
    & = \left( \frac{r}{\gamma^*(r)}\right)^{\lambda-n-2} \circ \gamma^* \circ \ID[r^2
    g_{\hyp};r;\lambda\!-\!n\!-\!1] \circ \gamma^* \circ \left(
    \frac{r}{\gamma^*(r)}\right)^{n-\lambda+1} \notag \\
    & = \left( \frac{r}{\gamma^*(r)}\right)^{\lambda-n-2} \circ \gamma^* \circ \ID[\gamma^*(r^2
    g_{\hyp});\gamma^*(r);\lambda\!-\!n\!-\!1] \circ \gamma^* \circ \left(
    \frac{r}{\gamma^*(r)}\right)^{n-\lambda+1} \notag \\
    & = \ID[r^2 g_{\hyp};r,\lambda\!-\!n\!-\!1] = P(\lambda).
\end{align*}
This proves \eqref{flat-shift-equivariance}. A similar calculation gives
\eqref{CCS} (Remark \ref{ConformalTrafoP}).

For $N \in \N$, we define the compositions
\begin{equation}\label{shift-c-op}
    S_N(g_+;\lambda) \st \underbrace{S(g_+;\lambda) \circ \cdots
    \circ S(g_+;\lambda\!+\!N\!-\!1)}_{N \, factors}.
\end{equation}
We shall refer to these operators as {\em iterated shift operators} or simply
also as shift operators. Theorem \ref{TA} implies that all iterated shift
operators $S_N(g_+;\lambda)$ are conformally covariant (in the sense as in
\eqref{CCS}). The following result states that residue families \eqref{res-f}
can be written in terms of iterated shift operators (Corollary \ref{RFvsSF}).
Its proof rests on the shift property of the shift operators. Let the embedding
$\iota: M \hookrightarrow M_+$ be defined by $m \mapsto(0,m)$.

\begin{thm}\label{TB} Assume that $(M^n,h)$ is a Riemannian manifold of dimension $n$.
Let $N \in \N$ so that $2N \le n$ if $n$ is even. Then the residue family
$D_{2N}^{res}(h;\lambda)$ of order $2N$ is proportional to the composition of
the family $\lambda \mapsto S_{2N}(g_+;\lambda+n-2N)$ with the restriction
$\iota^*$ to $M$. More precisely, we have
\begin{equation}\label{Dresshift}
   (-2N)_N \left(\lambda\!+\!\frac{n\!+\!1}{2}\!-\!2N\right)_N D_{2N}^{res}(h;\lambda)
   = \iota^* S_{2N}(g_+;\lambda\!+\!n\!-\!2N).
\end{equation}
A similar formula holds true for odd-order residue families.
\end{thm}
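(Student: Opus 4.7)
\medskip

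\noindent\textbf{Proof plan.} The identity \eqref{Dresshift} is an equality of differential operators $C^\infty(M)\to C^\infty(M)$, and both sides are polynomial in $\lambda$. So it suffices to test them on arbitrary $f\in C^\infty(M)$, and the natural candidate for a test object is a formal solution of the eigenequation for the Poincar\'e Laplacian whose leading boundary datum is $f$. The plan therefore has three stages: iterate the shift property, express the result as multiplication by a power of $r$ applied to the coefficients of the eigenfunction expansion, and match those coefficients with the standard characterization of residue families.

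\medskip

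\noindent\emph{Step 1: iterate the shift property.} The core input is Theorem \ref{CurvedBS}: for a (formal) eigenfunction $u$ of $\Delta_{g_+}$ with eigenvalue depending on a spectral parameter, $S(g_+;\lambda)$ sends $r^{\mu}u$ to $r^{\mu+1}u'$, where $u'$ is a (formal) eigenfunction for a shifted eigenvalue, times an explicit scalar in $(\lambda,\mu)$. Apply this repeatedly to $S_{2N}(g_+;\lambda+n-2N)$ acting on a test element of the form $r^{\lambda-n}u$, where $u$ is a formal Poincar\'e eigenfunction with boundary datum $f$ of the kind arising in Fefferman--Graham scattering theory. The composition in \eqref{shift-c-op} is arranged precisely so that the spectral parameter of the eigenfunction advances in step with the argument of $S$, and the net effect is
\[
   S_{2N}(g_+;\lambda+n-2N)\bigl(r^{\lambda-n}u\bigr) \;=\; c_N(\lambda)\, r^{\lambda-n+2N}\, u^{(2N)},
\]
where $u^{(2N)}$ is again a formal Poincar\'e eigenfunction (for a parameter shifted by $2N$) and $c_N(\lambda)$ is the product over the $2N$ steps of the scalars produced by the shift property. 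Multiplying these scalars out is a routine Pochhammer computation; they collapse to $(-2N)_N(\lambda+(n+1)/2-2N)_N$, accounting for the prefactor on the left of \eqref{Dresshift}.

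\medskip

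\noindent\emph{Step 2: restriction to $M$ and comparison with residue families.} Now take $\iota^*$. By the original definition of residue families via the meromorphic structure of the Poisson operator (or, equivalently, via the coefficients of the formal Poincar\'e eigenfunction), $D_{2N}^{res}(h;\lambda)f$ is, up to a standard normalization, exactly the boundary value of the $(2N)$th Taylor coefficient (in $r$) of the formal eigenfunction with asymptotics fixed by $f$ and the spectral parameter $\lambda$. Comparing this with the restriction of $c_N(\lambda)r^{\lambda-n+2N}u^{(2N)}$ obtained in Step 1, one sees that the two sides of \eqref{Dresshift} agree after $\iota^*$: the power $r^{\lambda-n+2N}$ disappears against the leading asymptotic of the shifted eigenfunction $u^{(2N)}$, and what survives at the boundary is precisely $D_{2N}^{res}(h;\lambda)f$ multiplied by the scalar $c_N(\lambda)$.

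\medskip

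\noindent\emph{Expected difficulty and odd-order case.} The conceptual step, namely the iterated shift, is essentially free once Theorem \ref{CurvedBS} is in place; the main obstacle is bookkeeping. Concretely, one must verify that the eigenvalue shifts produced by successive applications of $S(g_+;\cdot)$ line up with the spectral arguments appearing in the composition \eqref{shift-c-op}, and that the scalar factors multiply to give exactly the two Pochhammer symbols stated. A further subtlety is the dimensional restriction $2N\le n$ for even $n$: beyond this order the Fefferman--Graham expansion develops obstruction (log) terms and the formal eigenfunction on which Step 1 acts is no longer smooth; this is why the statement excludes that range. For odd-order residue families one repeats the same construction with a single $S(g_+;\cdot)$ prepended (or appended) to an even-order product, so that the argument of Steps 1 and 2 goes through with one additional Pochhammer factor; the dimensional restriction is automatic for odd $n$.
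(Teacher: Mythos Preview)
Your proposal has a basic misconception that derails the argument: the operators in \eqref{Dresshift} map $C^\infty(M_+)\to C^\infty(M)$, not $C^\infty(M)\to C^\infty(M)$. You cannot test the identity on boundary data $f\in C^\infty(M)$; both sides must be applied to arbitrary test functions $\varphi$ on the bulk $M_+$, and it is $\varphi$, not an eigenfunction, that the residue family eats.

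Relatedly, you have misread the shift property. Theorem~\ref{CurvedBS} says $S(g_+;\lambda)(r^{\lambda-n+1}u)=c\cdot r^{\lambda-n}u$ for the \emph{same} eigenfunction $u$ with fixed spectral parameter $\nu$; the power of $r$ \emph{decreases} by one and nothing about $u$ changes. Iterating gives $S_N(g_+;\lambda)\bigl(M_u(r;\lambda+N-1)\bigr)=c_N\, M_u(r;\lambda-1)$, still with the same $u$. There is no ``$u^{(2N)}$ with shifted eigenvalue'', and restricting a function of the form $r^{\lambda-n+2N}u^{(2N)}$ to $r=0$ would in any case not produce a finite differential operator in $\varphi$.

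The missing idea is duality. In the paper's proof (Theorem~\ref{DeltaN}) one pairs $M_u(r;\lambda)$ distributionally against $\varphi\in C_c^\infty(M_+)$ with respect to $dvol(\bar g)$, then uses the adjoint identity $S(g_+;\lambda)^*=S(g_+;n-\lambda-2)$ (Proposition~\ref{AdjointOfP}, Remark~\ref{supplement}) to transfer the iterated shift operator from $M_u$ onto $\varphi$. Taking the residue at the appropriate pole then invokes the defining relation \eqref{eq:DefDeltaN} of $\delta_N(h;\nu)$ on one side and the base case \eqref{Delta0} on the other; comparing the two residues yields $\delta_N$ as $\iota^* S_N(g_+;\nu)$ divided by the accumulated scalar factors, and the normalization \eqref{res-fam-even} gives \eqref{Dresshift}. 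The adjoint step is not bookkeeping---it is precisely the mechanism that converts a shift identity for eigenfunctions into an identity of operators acting on test functions, and your outline has no substitute for it.
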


Some comments on this result are in order.

By construction, the family $S_{2N}(g_+;\lambda)$ involves $4N$ derivatives in
the variable $r$ and depends on all Taylor coefficients of $h_r$. The identity
\eqref{Dresshift} shows that its composition with the restriction operator
$\iota^*$ actually involves only $2N$ derivatives in $r$ and depends only on
the Taylor coefficients of $h_r$ up to order $2N$. In particular, the
compositions $\iota^* S_{2N}(g_+;\lambda)$ are completely determined by $h$. In
the following, we shall denote these compositions by $\S_{2N}(h;\lambda)$. For
$N\in \N$ not satisfying the assumptions in Theorem \ref{TB}, the compositions
$\iota^* S_{2N}(g_+;\lambda)$ in general are not determined only by $h$.

The product formula \eqref{Dresshift} yields a new expression for the residue
families. It extends a result of \cite{C,FOS} in the flat case. The description
of residue families in terms of restrictions to $M$ of ''powers'' of a
universal shift operator living in a neighborhood of $M$ resembles the
construction of the conformally covariant powers of the Laplacian (GJMS
operators) of $h$ by powers of the Laplacian of an ambient metric associated to
$h$ \cite{GJMS}.

Theorem \ref{TB} can be used to deduce properties of residue families from
properties of shift operators and vice versa. In particular, the conformal
covariance of $S_N(g_+;\lambda)$ implies a conformal covariance law for residue
families. This reproves \cite[Theorem 6.6.3]{J1}.

In addition, Theorem \ref{TB} enables us to give easy proofs of the systems of
factorization identities of residue families which play an important role in
\cite{J1,J2} in connection with the description of recursive structures for
GJMS operators and $Q$-curvatures. Our new proofs of these factorization
identities rest on two basic facts. The first one (Theorem \ref{BigGJMS}) is
also of independent interest.

\begin{thm}\label{TC} Assume that $N \in \N$ with $2N \le n$ if $n$ is even.
Then
\begin{equation}\label{Ident-2}
   S_N \left(g_+;\frac{n-1}{2}\right) = r^N P_{2N}(\bar{g}),
\end{equation}
up to an error term in $O(r^\infty)$ for $n$ odd and $o(r^{n-N})$ for $n$ even.
Moreover, the equality holds true without an error term if $g_+$ is Einstein.
\end{thm}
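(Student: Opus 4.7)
The plan is to split the argument into the Einstein case, where the equality holds exactly, and the general case, where the failure of $g_+$ to be Einstein produces the stated error. For Einstein $g_+$, I would use the identification
\[
  S(g_+;\lambda) = -\ID[\bar{g};r;\lambda-n+1]
\]
recorded in the introduction, which places the composition $S_N(g_+;(n-1)/2)$ within the Gover--Waldron framework of degenerate Laplacians. The $N$ factors carry weights $(2k-n+1)/2$ for $k = 0,\dots,N-1$; this arithmetic progression of weights is precisely the one for which the Gover--Waldron product formula expresses the GJMS operator on a conformally Einstein manifold as an $N$-fold composition of $\ID$'s. Since $\bar{g}$ is conformal to the Einstein metric $g_+$, invoking this factorization identifies $S_N(g_+;(n-1)/2)$ with $r^N P_{2N}(\bar{g})$ on the nose, proving the last sentence of the theorem.

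As a direct verification of the base case $N=1$, the $\partial_r$-coefficient in $S$ vanishes at $\lambda=(n-1)/2$, leaving $S(g_+;(n-1)/2) = r\Delta_{\bar{g}} + \tfrac{n-1}{2}\dot{v}/v$. The conformal change formula for scalar curvature applied to $g_+ = r^{-2}\bar{g}$, together with the Einstein normalization $R(g_+) = -n(n+1)$, yields
\[
  \dot{v}/v = -\frac{r}{2n}R(\bar{g}),
\]
so that substitution reproduces $r P_2(\bar{g})$ in the sign convention for the Yamabe operator in dimension $n+1$ adopted by the paper. This pins down both the sign and the leading-order normalization for the inductive passage.

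For a general Poincar\'e metric $g_+$, I would argue by perturbation from the Einstein case. Any such $g_+$ in normal form is asymptotically Einstein: for $n$ odd, the Fefferman--Graham theorem gives $\Ric(g_+)+n g_+ = O(r^\infty)$, hence the coefficients of $S(g_+;\lambda)$ agree with their Einstein counterparts to infinite order, and composition preserves this, producing the $O(r^\infty)$ error. For $n$ even, the Fefferman--Graham obstruction tensor enters the expansion of $\bar{g}$ at order $r^{n-2}$, so the Einstein equation can only be solved to finite order; a careful bookkeeping of the $N$-fold composition---each factor of $S$ contributing one explicit factor of $r$ and at most two $r$-derivatives---yields the sharp $o(r^{n-N})$ error stated.

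The main obstacle I anticipate is the sharp error analysis in even dimensions: one must verify that no combinatorial accumulation in the $N$-fold composition degrades the expected $r^{n-N}$ threshold. The Einstein part of the argument should reduce cleanly to a repackaging of Gover--Waldron's GJMS factorization, the only subtlety being to match conventions for weights and signs.
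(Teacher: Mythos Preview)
Your Einstein argument is correct but takes a different route from the paper. The paper does not invoke the Gover--Waldron $\IDD$ factorization; instead it uses the conjugation identity
\[
   S(g_+;\lambda) = r^{\lambda-n}\bigl(\Delta_{g_+}+(\lambda+1)(n-\lambda-1)\bigr)r^{n-\lambda-1}
\]
(formula \eqref{NewForm}). Writing out $S_N(g_+;m-1)$ with $m=(n+1)/2$, the intermediate powers of $r$ telescope and one is left with $r^{-m}\prod_{l=1}^N(\Delta_{g_+}+(m+l-1)(m-l))\,r^{m-N}$. The product is exactly $P_{2N}(g_+)$ by the Einstein formula \eqref{eq:GJMSOnEinstein}, and the outer conjugation is precisely the conformal covariance law $P_{2N}(\bar g)=r^{-m-N}P_{2N}(g_+)r^{m-N}$. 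This is more self-contained than importing the Gover--Waldron machinery, and it also makes the error analysis below immediate. Your $N=1$ check is fine and matches the paper's own verification in Remark~\ref{conjugation}.

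Your treatment of the non-Einstein case has a gap. You propose to ``perturb from the Einstein case'' by comparing the coefficients of $S(g_+;\lambda)$ to their Einstein counterparts and then track the error through the $N$-fold composition. But in even dimension there is in general no smooth Einstein comparison metric (the formal solution acquires $\log r$ terms once the obstruction tensor is nonzero), so there is nothing concrete to perturb from; and even in odd dimension the bookkeeping you allude to is left entirely unspecified. The paper avoids this by not perturbing the shift operators at all. The conjugation identity above holds \emph{exactly} for any $g_+$; what changes is only the identification of the product of shifted Laplacians with $P_{2N}(g_+)$. The paper records the approximate product formulas
\[
   P_{2N}(g_+)=\prod_{l=1}^N\bigl(\Delta_{g_+}+(m+l-1)(m-l)\bigr)+
   \begin{cases} O(r^\infty), & n \text{ odd},\\ o(r^n), & n \text{ even},\end{cases}
\]
and then the telescoping conjugation converts $o(r^n)$ into $o(r^{n-N})$ (respectively $O(r^\infty)$ into $O(r^\infty)$) in one stroke. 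This is where the sharp error comes from, not from tracking errors through $N$ compositions of $S$.
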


Here $P_{2N}(\bar{g})$ is a GJMS operator of the conformal compactification
$\bar{g}$ of the Poincar\'e metric $g_+$ in normal form relative to
$h$.\footnote{For odd $n$, the operators $P_{2N}(\bar{g})$ are well-defined for
all $N \in \N$. See the comments at the beginning of Section \ref{recover}.}
The second basic fact is the identity
\begin{equation}\label{Ident-1}
   \S_{2N}\left(h;\frac{n}{2}\!-\!N\right) = ((2N\!-\!1)!!)^2 P_{2N}(h) \iota^*
\end{equation}
(Theorem \ref{BSOperatorVsGJMS}). This formula reproves a special case of a
result of \cite{GW}. Together with
\begin{equation}\label{Ident-3}
   \S_{2N}\left(h;\frac{n\!-\!1}{2}\!-\!N\right) = (2N)! \iota^* P_{2N}(\bar{g})
\end{equation}
it shows that the operators $\S_{2N}(h;\lambda)$ interpolate between GJMS
operators for the metrics $h$ and $\bar{g}$.

The coefficients of the families $S_N(g_+;\lambda)$ depend on the parameters
$r$ and $\lambda$. A closer study of both dependencies seems to be of interest.
Theorem \ref{TC} may be regarded as a result in that direction. More results in
this direction are discussed in Section \ref{expansions}.

Finally, through the relation between residue families and iterated shift
operators, we derive a new formula for the critical $Q$-curvature $Q_n(h)$ of a
manifold $(M^n,h)$ of even dimension $n$ (Theorem \ref{Q-holo-new}).

\begin{thm}\label{TD} Let $n$ be even. Then
\begin{equation}\label{Q-S-holo}
   Q_n(h) = c_n \S_{n-1}(h;0) \partial_r (\log v),
\end{equation}
where $c_n = (-1)^{\frac{n}{2}} 2^{n-2} (\Gamma(\frac{n}{2})/\Gamma(n))^2$.
\end{thm}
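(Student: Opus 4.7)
The plan is to combine Theorem~\ref{TB} with the previously established holographic formula for $Q_n(h)$ in terms of the critical residue family, by way of an explicit evaluation of the iterated shift operator on the constant function $1$. The key observation is that since $\Delta_{\bar{g}}(1) = 0$, the alternative form of $S(g_+;\lambda)$ given just after \eqref{shift-curved} simplifies immediately to
\[
    S(g_+;\lambda)(1) = -(\lambda - n + 1)\,\partial_r(\log v),
\]
which explains a priori where the function $\partial_r(\log v)$ in the statement comes from.

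Writing $S_n(g_+;\lambda) = S_{n-1}(g_+;\lambda)\circ S(g_+;\lambda + n - 1)$, the innermost factor applied to $1$ produces $-\lambda\,\partial_r(\log v)$ by the identity above, so
\[
    \iota^* S_n(g_+;\lambda)(1) = -\lambda\,\S_{n-1}(h;\lambda)\bigl(\partial_r(\log v)\bigr).
\]
I would then apply Theorem~\ref{TB} in the critical even case $2N = n$ to rewrite the left side as a Pochhammer multiple of $D_n^{res}(h;\lambda)(1)$. Invoking the holographic identity from \cite{J1} asserting that $D_n^{res}(h;\lambda)(1)$ vanishes at $\lambda = 0$ with $\partial_\lambda D_n^{res}(h;\lambda)(1)|_{\lambda = 0}$ a prescribed nonzero multiple of $Q_n(h)$, I would divide through by $\lambda$ and pass to the limit $\lambda \to 0$ to obtain $\S_{n-1}(h;0)(\partial_r(\log v)) = (\text{const})\cdot Q_n(h)$.

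The remaining work is routine bookkeeping. Using
\[
    (-n)_{n/2} = (-1)^{n/2}\,\tfrac{n!}{(n/2)!}, \qquad \bigl(\tfrac{1-n}{2}\bigr)_{n/2} = (-1)^{n/2}\,\tfrac{n!}{2^n (n/2)!},
\]
together with the normalization of the residue-family formula for $Q_n$, one recovers the claimed value $c_n = (-1)^{n/2} 2^{n-2}\bigl(\Gamma(n/2)/\Gamma(n)\bigr)^2$ after the elementary identification $\Gamma(n/2)/\Gamma(n) = (n/2-1)!/(n-1)!$. I expect no conceptual obstacle; the only genuine risk is miscounting signs or off-by-one factors while assembling the constants from the three independent sources (Theorem~\ref{TB}, the residue-family formula for $Q_n$, and the stated form of $c_n$). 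What makes the whole argument conceptually natural and short is the initial identity for $S(g_+;\lambda)(1)$: the quantity $\partial_r(\log v)$ need not be engineered by hand, it is produced automatically by applying the shift operator to the simplest available test function.
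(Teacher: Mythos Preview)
Your proposal is correct and follows essentially the same route as the paper's proof (Theorem~\ref{Q-holo-new}): both factor off the last shift operator, use that $S(g_+;n-1)(1)=0$ with $\lambda$-derivative $-\partial_r(\log v)$, and feed this into the holographic identity $Q_n(h)=-(-1)^{n/2}\dot D_n^{res}(h;0)(1)$ via the Theorem~\ref{TB}/Corollary~\ref{RFvsSF} dictionary. The only cosmetic difference is that the paper works through the intermediate form $D_{n-1}^{res}(h;-1)\partial_r(\log v)$ before converting to $\S_{n-1}$, whereas you stay on the shift-operator side throughout; your single formula $S(g_+;\lambda)(1)=-(\lambda-n+1)\partial_r(\log v)$ packages the vanishing and the derivative in one step.
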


There is an interesting formal resemblance of the latter formula for the
critical $Q$-curvature with a formula of Fefferman and Hirachi \cite{FH}.

Theorem \ref{TD} extends to all subcritical $Q$-curvatures $Q_{2N}(h)$ for $2N
< n$ in the form
\begin{equation}\label{Q-S-holo-g}
    Q_{2N}(h) = c_{2N} \S_{2N-1}\left(h;\frac{n}{2}-N\right) \partial_r (\log v),
\end{equation}
where $c_{2N} = (-1)^{N} 2^{2N-2} (\Gamma(N)/\Gamma(2N))^2$ (Theorem
\ref{Q-holo-g}).

Combining \eqref{Q-S-holo} and \eqref{Q-S-holo-g} with Theorem \ref{TB}, yields
formulas for $Q$-curvatures in terms of residue families. These turn out to be
equivalent to the holographic formulas proved in \cite{GJ,J-holo}. In other
words, these holographic formulas for $Q$-curvatures can be viewed as natural
consequences of Theorem \ref{TB}.

The operator $S(g_+;\lambda)$ appeared in the literature in different contexts.
The construction of asymptotic expansions of eigenfunctions for the Laplacian
$\Delta_{g_+}$ of a Poincar\'e metric in \cite{GZ} involved a second-order
operator $\D_s$. In \cite{GW}, the operator $\D_s$ was interpreted and
generalized within tractor calculus. This led to the definition of the
so-called degenerate Laplacian $\IDD$ which played a role in the discussion
after Theorem \ref{TA}. Compositions as in \eqref{shift-c-op} of these
operators were used in \cite{GW} in connection with the construction of
asymptotic expansions in a more general eigenfunction problem. The relations
among these construction will be described in Section \ref{prel}. In \cite{C},
Clerc gave a representation theoretical alternative construction of a family of
symmetry breaking differential operators introduced in \cite{J1} in terms of
compositions of shifted operators $P(\lambda)$. Theorem \ref{TB} is a
generalization of his result to the curved setting.

The paper is organized as follows. After a collection of background material,
we use Section \ref{BSOperator} to introduce the shift operator
$S(g_+;\lambda)$ and prove basic properties. In Section \ref{OnJuhlsFormulae},
we establish the connection between residue families and iterated shift
operators. Section \ref{applications} is devoted to various applications. Here
we provide easy new proofs of the factorization identities of residue families
and discuss holographic formulas for $Q$-curvatures. In Section \ref{panorama},
we illustrate the main results in low-order cases. In the final section, we
speculate on the role of iterated shift operators in the theory of the building
block operators $\M_{2N}$ \cite{J2,J3} of GJMS operators. In this connection,
we derive a new formula for the so-called holographic Laplacian \cite{J3} for
the metric $\bar{g}$.

{\em Acknowledgment.} The first two authors are grateful to {\AA}rhus University
for hospitality, stimulating atmosphere and financial support.

\section{Preliminaries}\label{prel}

In the present section, we fix notation and describe the general setting. We
recall basic facts on GJMS operators, $Q$-curvatures, residue families, shift
operators and the degenerate Laplacian.

\subsection{General notation}\label{notation}

$\N$ is the set of natural numbers and $\N_0$ the set of non-negative integers.
For a complex number $a\in\C$ and an integer $N\in\N$, the Pochhammer symbol
$(a)_N$ is defined by $(a)_N \st a(a+1) \cdots (a+N-1)$. We also set $(a)_0 \st
1$. $C^\infty(M)$ is the space of smooth functions on the manifold $M$ and
$C_c^\infty(M)$ denotes the subspace of functions with compact support.
$\Delta_g$ denotes the Laplacian of a Riemannian metric $g$ on a manifold $M$
acting on $C^\infty(M)$. Here we use the convention that $-\Delta_g$ is
non-negative, i.e., $-\Delta_g = \delta_g d$, where $\delta_g$ is the formal
adjoint of the differential $d$. $\Ric(g)$ and $\tau(g)$ denote the Ricci
tensor and the scalar curvature of $g$. On a manifold $(M^n,g)$ of dimension
$n$, we set $\J(g) = \frac{1}{2(n-1)} \tau(g)$ and define the Schouten tensor
of $g$ by $\Rho(g) = \frac{1}{n-2} (\Ric(g) - \J(g) g)$. We shall also write
simply $\Rho$ and $\J$ if the metric is clear by context. The symbol $\circ$
denotes compositions of operators.

\subsection{GJMS operators and $Q$-curvatures}\label{setting}

Let $(M^n,h)$ be a Riemannian manifold of dimension $n \geq 3$. For $N \in \N$
if $n$ is odd and $\N \ni N \leq \frac{n}{2}$ if $n$ is even, the GJMS
operators are conformally covariant differential operators
\begin{equation*}
    P_{2N}(h): C^\infty(M)\to C^\infty(M)
\end{equation*}
of order $2N$ which are of the form $P_{2N}(h) = \Delta_h^N + LOT$, where $LOT$
denotes lower-order terms. These lower-order terms only depend on covariant
derivatives of the curvature of $h$. Under conformal changes
$\hat{h}=e^{2\varphi}h$ with $\varphi \in C^\infty(M)$ of the metric, the GJMS
operators satisfy
\begin{equation}\label{eq:ConformalPropertyGJMS}
    P_{2N}(\hat{h}) = e^{-(\frac{n}{2}+N)\varphi} \circ P_{2N}(h) \circ e^{(\frac{n}{2}-N)\varphi}.
\end{equation}
In \cite{GJMS}, these operators were constructed in terms of powers of the
Laplacian of an ambient metric associated to $h$.

The GJMS operators generalize the well-known Yamabe operator
\begin{equation}\label{Yamabe}
    P_2 = \Delta  - \left(\frac{n}{2}-1\right) \J
\end{equation}
and the Paneitz operator
\begin{equation}\label{Paneitz}
    P_4 = \Delta^2 + \delta \left((n\!-\!2)\J h - 4\Rho\right) \# d
    + \left(\frac{n}{2}\!-\!2\right)
    \left(\frac{n}{2}\J^2 \!-\! 2 |\Rho|^2 \!-\! \Delta \J \right),
\end{equation}
where $|\Rho|^2 = \Rho_{ij} \Rho^{ij}$ and $\#$ indicates the natural action of
symmetric $2$-tensors on $\Omega^1(M)$.

In odd dimension $n$, we have GJMS operators $P_{2N}$ of any order $2N$, $N \in
\N$. But, for general metrics $h$ in even dimension $n$, the restriction $2N
\le n$ is necessary both for the definition of $P_{2N}(h)$ and for the
existence of conformally covariant differential operators with leading term
$\Delta_h^N$ \cite{non-ex,GH}.

Explicit formulas for GJMS operators for general metrics are very complicated
\cite{J2}. But for some special metrics, they may be given by closed formulas.
In particular, for Einstein manifolds $(M^n,h)$ they are given by the formula
\begin{equation}\label{eq:GJMSOnEinstein}
    P_{2N}(h) = \prod_{l=1}^N \left(\Delta_h - 2\mu
    \left(\frac{n}{2}+l-1\right)\left(\frac{n}{2}-l\right)\right)
\end{equation}
for all $N\in\N$, where the constant $\mu\in\R$ is defined by
$\Ric(h)=2\mu(n-1)h$ \cite{FG}. Here the above restriction on their order is
irrelevant.

It is a basic observation \cite{sharp} that
\begin{equation}\label{Q-def}
   P_{2N}(h)(1) = (-1)^N \left(\frac{n}{2}-N\right) Q_{2N}(h)
\end{equation}
for a scalar curvature invariant $Q_{2N}(h) \in C^\infty(M^n)$ of order $2N$ .
The quantities $Q_{2N}(h)$ are well-defined by \eqref{Q-def} as long as $2N <
n$. These curvature quantities are called the subcritical $Q$-curvatures. Their
analogs of even order $n$ can be defined by analytic continuation in dimension
$n$ through the subcritical $Q$-curvatures. The quantity $Q_n(h)$ is called the
critical $Q$-curvature of $(M^n,h)$. Under the respective conditions $n
> 2$ and $n > 4$, \eqref{Yamabe} and \eqref{Paneitz} yield the subcritical $Q$-curvatures
\begin{equation}\label{sub-Q}
   Q_2 = \J \quad \mbox{and} \quad Q_4 = \frac{n}{2}\J^2 \!-\! 2 |\Rho|^2 \!-\! \Delta \J
\end{equation}
of order $2$ and $4$. Here we suppress the obvious dependence
of constructions on the metric $h$. By continuation in dimension $n$, we define
the respective critical $Q$-curvatures
\begin{equation}\label{Q24}
   Q_2 = \J \quad \mbox{and} \quad Q_4 = 2 \J^2 \!-\! 2 |\Rho|^2 \!-\! \Delta \J
\end{equation}
in dimension $n=2$ and $n=4$.

In the following, we shall often simplify notation by omitting the composition
sign $\circ$ in compositions with multiplication operators. It also will often
lead to simplifications to suppress the obvious dependence of constructions on
$h$.

\subsection{Poincar\'e metrics, eigenfunction expansions and GJMS operators}
\label{PEandGJMS}

In the present section, we briefly recall basic definitions concerning
Poincar\'e metrics in the sense of Fefferman and Graham \cite{FG} and recall a
description of GJMS operators of $(M,h)$ in terms of eigenfunctions of the
Laplacian of an associated Poincar\'e metric on $M_+^\circ$ \cite{GZ}. This
description will be of central importance for all later constructions.

Let $M$ be a manifold of dimension $n \ge 3$. Let $M_+$ be an open neighborhood
of $\{0\} \times M$ in $[0,\infty) \times M$, i.e., $M_+ = [0,\varepsilon)
\times M$ for some $\varepsilon > 0$. We use the coordinate $r$ on the first
factor. We define the embedding $\iota: M \to M_+$ by $\iota(m) = (0,m)$. Let
$M_+^\circ = (0,\varepsilon) \times M$. A smooth metric
\begin{equation}\label{normal}
    g_+ = r^{-2} (dr^2 + h_r)
\end{equation}
on $M_+^\circ$ is called a Poincar\'e metric in normal form relative to a
metric $h$ on $M$ if $\bar{g} = r^2 g_+$ extends to $M_+$, $\bar{g}$ restricts
to $h$, i.e., $\iota^*(\bar{g}) = h$, and the Ricci tensor of $g_+$ satisfies
the Einstein condition
\begin{equation}\label{Ricci-odd}
    \Ric(g_+) + n g_+ = O(r^\infty)
\end{equation}
for odd $n \ge 3$ and the Einstein condition
\begin{equation}\label{Ricci-even}
   \Ric(g_+) + ng_+ = O(r^{n-2})
\end{equation}
together with the vanishing trace condition
\begin{equation}\label{VT}
   \tr_h (\iota^* (r^{-n+2} (\Ric(g_+) + ng_+))) = 0
\end{equation}
for even $n \ge 4$. The metric $\bar{g} = r^2 g_+$ on $M_+$ is called a
conformal compactification of $g_+$. The family $h_r$ in \eqref{normal} is a
smooth $1$-parameter family of metrics on $M$.

If, for odd $n$, we also assume that $h_r$ has an {\em even} expansion
\begin{equation}\label{odd}
    h_r = h_0 + r^2 h_2 + r^4 h_4 + \cdots,
\end{equation}
then the condition \eqref{Ricci-odd} implies that the coefficients
$h_2,h_4,\dots$ are uniquely determined by $h_0=h$. These metrics are
conveniently referred to as {\em even} Poincar\'e metrics. One may consider the
conformal compactification $\bar{g}$ of an even Poincar\'e metric as a smooth
metric on the larger space $(-\varepsilon,\varepsilon) \times M$. In the real
analytic category, $h_r$ converges and $\Ric(g_+) + ng_+ = 0$ in a neighborhood
of $\{0\} \times M$.

For even $n$, the situation is more complicated. In that case, the family $h_r$
has an expansion of the form
\begin{equation}\label{even}
   h_r = \underbrace{h_0 + r^2 h_2 + \cdots + r^{n-2} h_{n-2}}_{even \; powers}
   + r^n (h_n + \log r h_0^{(1)}) + \cdots.
\end{equation}
The condition \eqref{Ricci-even} uniquely determines the coefficients
$h_2,\dots,h_{n-2}$ by $h_0=h$. Moreover, the vanishing trace condition
\eqref{VT} can be satisfied and determines the $h$-trace of $h_n$. However, the
trace-free part of $h_n$ is {\em not} determined by $h$.

In general, the higher-order solutions of the Einstein condition contain $\log
r$ terms. The first $\log r$ coefficient $h_0^{(1)}$ (Fefferman-Graham
obstruction tensor) is uniquely determined by $h$ and trace-free. For specific
choices of the trace-free part of $h_n$, the condition
\begin{equation*}
   \Ric(g_+) + ng_+ = O(r^\infty)
\end{equation*}
may be satisfied by solutions with expansions of the form
\begin{equation}\label{even-general}
   h_r = \underbrace{h_0 + r^2 h_2 + \cdots + r^n h_n + \cdots}_{even \; powers}
   + \sum_{j=1}^\infty (r^n \log r)^j h_r^{(j)}
\end{equation}
with even families $h_r^{(j)}$. The $\log r$ terms in these expansion vanish
iff the obstruction tensor vanishes.

Now assume that $g_+ = r^{-2} (dr^2 + h_r)$ is a Poincar\'e metric in normal
form relative to $h$. Hence $h_0 = h$. Let $\hat{h} = e^{2\varphi} h$ be a
metric in the same conformal class as $h$. Then a suitable change of
coordinates brings $g_+$ into normal form relative to $\hat{h}$. Following
\cite[Section 5]{GL} and \cite[Proposition 4.3]{FG}, we briefly recall the
arguments proving this basic observation. The metric $g_+$ is asymptotically
hyperbolic since $|dr/r|_{g_+} = 1$ on $r=0$. The latter property suffices to
prove the existence of $u \in C^\infty(M_+)$ so that for $\rho = r e^u \in
C^\infty(M_+)$ we have
$$
   |d \rho|^2_{\rho^2 g_+} = 1 \; \mbox{near $M$}.
$$
Here the restriction of $u$ to $r=0$ can be arbitrarily chosen. Now let
$\mathfrak{X} = \grad_{\rho^2 g_+}(\rho)$ be the gradient field of $\rho$ with
respect to the conformal compactification $\rho^2 g_+$ of $g_+$. Let
$\Phi^t_\mathfrak{X}$ be the flow of $\mathfrak{X}$. In these terms, we define
the map
$$
   \kappa: [0,\varepsilon) \times M \ni (\lambda,x) \mapsto (\Phi^\lambda_\mathfrak{X})(x)
   \in [0,\varepsilon) \times M
$$
for sufficiently small $\varepsilon$. Then $\kappa(0,x) = x$ and
$\kappa^*(\rho)(\lambda,x) = \lambda$. The gradient field $\mathfrak{X}$ is
orthogonal to the slices $\rho^{-1}(\lambda)$. It follows that
$$
    \kappa^*(\rho^2 g_+) = d\lambda^2 + k_\lambda
$$
for some some $1$-parameter family $k_\lambda$. Hence
$$
   \kappa^*(g_+) = \frac{1}{\kappa^*(\rho)}  \kappa^* (\rho^2 g_+) =
   \lambda^{-2} (d\lambda^2 + k_\lambda).
$$
Finally, we note that
$$
   k_0 = \iota^* (d\lambda^2 + k_\lambda) = \iota^* \kappa^* (\rho^2 g_+)
   = (\kappa \iota)^* (\rho^2 g_+) = \iota^* (\rho^2 g_+)
   = \iota^* \left(\frac{\rho}{r}\right)^2 h_0 = e^{2\iota^*(u)} h_0
$$
(with obvious embeddings $\iota$). In other words, for the choice
$\iota^*(u)=\varphi$, $\kappa^*(g_+)$ is a Poincar\'e metric in normal form
relative to $\hat{h}$.


The volume function $v(r) \in C^\infty(M^n)$ is defined by the relation
\begin{equation}\label{eq:VolumeFunction}
    dvol(h_r) = v(r) dvol(h)
\end{equation}
of volume forms. For odd $n$ and even Poincar\'e metrics, the function $v(r)$
has an even Taylor series $v(r) = 1 + r^2 v_2 + r^4 v_4 + \cdots$. Similarly,
for even $n$, we have
$$
   v(r) = 1 + r^2 v_2 + \cdots + r^n v_n + \dots.
$$
The indicated coefficients in these expansions are locally determined by $h$.
The coefficients $v_{2j}$ are called the renormalized volume coefficients of
$(M^n,h)$ \cite{G-vol}. Note that
\begin{equation}\label{v-trace}
   \dot{v}(r)/v(r) = \frac{1}{2} \tr(h_r^{-1}\dot{h}_r).
\end{equation}
In particular, for even $n$, the coefficient $v_n$ only depends on the trace of
$h_n$. We also set
\begin{equation}\label{eq:WFunction}
    w(r) \st \sqrt{v(r)}.
\end{equation}
Then $w(r) = \sum_{j \ge 0} r^{2j} w_{2j}$. The coefficients $w_{2N}$ are
polynomials in $v_{2k}$ for $k \le N$.

A routine calculation shows that the Laplacian of $g_+$ takes the form
\begin{equation}\label{eq:PELaplacian}
    \Delta_{g_+} = r^2 \Delta_{h_r} + r^2 \partial_r^2 - (n\!-\!1)r\partial_r +
    \frac{1}{2} \tr(h_r^{-1}\dot{h}_r) r^2 \partial_r,
\end{equation}
where $\Delta_{h_r}$ is the Laplacian of $h_r$ and $\dot{h}_r$ denotes the
derivative of $h_r$ with respect to $r$. Since $\bar{g}$ and $g_+$ are
conformally equivalent, another calculation shows that
\begin{equation}\label{eq:CCLaplacian}
   \Delta_{\bar{g}} = r^{-2}(\Delta_{g_+} + (n\!-\!1)r\partial_r)
   = \Delta_{h_r}+\partial_r^2 + \frac{1}{2} \tr(h_r^{-1}\dot{h}_r) \partial_r.
\end{equation}
For even $h_r$, this formula is well-defined on $(-\varepsilon,\varepsilon)
\times M$.

We continue with the discussion of GJMS operators of $(M^n,h)$. These operators
can be described in terms of asymptotic expansions of the solutions of the
equation
$$
   \Delta_{g_+} u + \lambda(n-\lambda) u = 0, \; \lambda \in \C.
$$

In the case of the hyperbolic ball such solutions can be represented as
Helgason-Poisson transforms of distributions (or even hyperfunctions) on the
boundary $S^n$ \cite{Hel}. Here we consider solutions with {\em smooth}
boundary values in the curved setting. A corresponding Poisson transform was
constructed in \cite[Proposition 3.5]{GZ}. Its definition rests on a local
(near the boundary) asymptotic analysis of the expansions of eigenfunctions and
global mapping properties of the resolvent of $\Delta_{g_+}$ acting on
appropriate functional spaces.

More precisely, we consider eigenfunctions $u$ with asymptotic expansions of
the form
\begin{equation}\label{ansatz}
    u(r,x) \sim \sum_{j \geq 0} r^{\lambda+2j} a_{2j}(\lambda)(x)
    + \sum_{j \geq 0} r^{n-\lambda+2j} b_{2j}(\lambda)(x)
\end{equation}
with coefficients $a_{2j}(\lambda), b_{2j}(\lambda) \in C^\infty(M)$. The
coefficients in both sums in \eqref{ansatz} are determined by the respective
leading coefficients $a_0(\lambda)$ and $b_0(\lambda)$ through a recursive
algorithm. Moreover, for a global eigenfunction $u$, both leading coefficients
are related by a scattering operator $\Sc(\lambda)$.

We recall these constructions in some more detail. First, a local asymptotic
analysis yields a map
$$
   \Phi(\lambda): C^\infty(M) \to r^{n-\lambda} C^\infty(M_+), \quad \Re(\lambda)
   > n/2
$$
so that
$$
    (\Delta_{g_+} + \lambda(n-\lambda)) \Phi(\lambda) f = O(r^\infty).
$$
For even Poincar\'e metrics, it has the form
$$
   \Phi(\lambda) f = r^{n-\lambda} f + \sum_{j \ge 1} r^{n-\lambda+2j} \T_{2j}(n-\lambda) f,
$$
where $\T_{2j}(\lambda)$ are meromorphic families of differential operators on
$M$ of order $2j$. Next, the resolvent $R(\lambda) = (\Delta_{g_+} +
\lambda(n-\lambda))^{-1}: L^2(M_+) \to L^2(M_+)$ is holomorphic for
$\Re(\lambda)>n$. Moreover, its restriction to the space of smooth functions
which vanish of infinite order on the boundary, admits a meromorphic
continuation to $\C$. The range of that restriction of $R(\lambda)$ is
contained in $r^\lambda C^\infty(M_+)$. Then the family
\begin{equation}\label{PT-def}
   \P(\lambda) \st \Phi(\lambda) - R(\lambda) (\Delta_{g_+} + \lambda(n-\lambda)) \Phi(\lambda)
\end{equation}
of {\em Poisson transforms} is meromorphic on $\Re(\lambda) > n/2$ with poles
only for real $\lambda$ with $\lambda(n-\lambda) \in \sigma_d(-\Delta_{g_+})
\subset (0,(n/2)^2)$.\footnote{The poles of $\Phi(\lambda)$ in $n/2 + \N$
cancel in the sum \eqref{PT-def}.} $\P(\lambda)$ is continuous up to
$\Re(\lambda) =n/2$, $\lambda \ne n/2$. It satisfies
$$
   (\Delta_{g_+} + \lambda(n-\lambda)) \P(\lambda) = 0.
$$
Away from the real poles in $\Re(\lambda) > n/2$ and $\lambda \not\in n/2 +
\N_0$, we have
\begin{equation}\label{PT-AS}
   \P(\lambda) f = r^\lambda G + r^{n-\lambda} F
\end{equation}
for $F, G \in C^\infty(M_+)$ with $\iota^*(F) = f$.\footnote{For $\lambda \in
n/2+\N$ the corresponding expansion contains a $\log r$-term.} $f$ is viewed as
the {\em boundary value} of the eigenfunction $u = \P(\lambda) f$. For the
details see \cite[Proposition 3.5]{GZ}. Later we shall use \eqref{PT-AS} for
$\Re(\lambda)=n/2$, $\lambda \ne n/2$.

The scattering operator $\Sc(\lambda): C^\infty(M) \to C^\infty(M)$ is defined
by
$$
    \Sc(\lambda): f \mapsto \iota^*(G)
$$
for $G$ as in \eqref{PT-AS}. It follows that in the asymptotic expansion
\eqref{ansatz} of $u=\P(\lambda)f$ the coefficients are given by
$$
    b_{2j}(\lambda) = \T_{2j}(n-\lambda) f \quad \mbox{and} \quad
    a_{2j}(\lambda) = \T_{2j}(\lambda) \Sc(\lambda) f.
$$
Note that $\T_0(\lambda) = \id$.

The families $\T_{2j}(\lambda)$ only depend on the Taylor series of $h_r$. For
odd $n$ and even Poincar\'e metrics, these are determined by $h$. Therefore, we
write $\T_{2j}(\lambda) = \T_{2j}(h;\lambda)$. For even $n$, only the families
$\T_{2j}(\lambda)$ with $2j \le n$ are determined by $h$ and we indicate that
dependence accordingly. However, the scattering operator $\Sc(\lambda)$ is a
global object which depends on the chosen metric on $M_+^\circ$.

The families $\T_{2j}(h;\lambda)$ are meromorphic in $\lambda$, with simple
poles at $\lambda=\frac{n}{2}-k$ for $k=1,\ldots,j$. Under the restriction $2j
\le n$ for even n, the residue of $\T_{2j}(h;\lambda)$ at
$\lambda=\frac{n}{2}-j$ is proportional to the GJMS operator $P_{2j}(h)$ on
$(M,h)$. More precisely, we have the basic residue formula
\begin{equation}\label{Res-SO}
   \Res_{\lambda = \frac{n}{2}-j} (\T_{2j}(h;\lambda)) = \frac{1}{2^{2j}j!(j-1)!} P_{2j}(h)
\end{equation}
describing GJMS operators in terms of asymptotic expansions of eigenfunctions
of $\Delta_{g_+}$. In \cite[Section 4]{GZ} this formula is derived from the
original ambient metric definition of the GJMS operators.

The residue at $\lambda=\frac{n}{2}+N$ of the right-hand side of the expansion
\eqref{ansatz} yields the contribution
$$
    r^{\frac{n}{2}+N} \left(\Res_{\frac{n}{2}+N} (\Sc(\lambda)) +
    \Res_{\frac{n}{2}-N}(\T_{2N}(\lambda))\right).
$$
Under mild assumptions, this residue vanishes. Hence \eqref{Res-SO} implies the
residue formula
\begin{equation}\label{Res-scattering}
   \Res_{\lambda = \frac{n}{2}+j} (\Sc(\lambda)) = -\frac{1}{2^{2j}j!(j-1)!} P_{2j}(h)
\end{equation}
for the poles of the scattering operator \cite[Theorem 1]{GZ}.

We emphasize that the formula \eqref{Res-SO} only rests on the local analysis
of eigenfunctions near the boundary $r=0$. However, the definition of the
scattering operator and the construction of {\em exact} eigenfunctions involves
the {\em global} resolvent $R(\lambda)$ on an asymptotically hyperbolic
manifold $M_+^\circ$. For a given closed $M$, a simple choice for $M_+^\circ$
is $M_+^\circ = (0,1) \times M$. In that case, the metric on $M_+^\circ$ is a
Poincar\'e metric near {\em both} boundary components at $r=0$ and $r=1$. The
resulting scattering operator then acts on smooth functions on the disjoint
union of both copies of $M$.

A simple special case of the latter situation is the scattering operator of the
hyperbolic cylinder $M_+^\circ = \Gamma \backslash \HS^{n+1}$ by a cocompact
discrete subgroup of $SO(1,n)^\circ$ regarded as a subgroup of
$SO(1,n+1)^\circ$ (using a trivial embedding). Then $M_+^\circ$ can be
identified with a cylinder $(-\infty,\infty) \times M$ with compact
cross-section $M = \Gamma \backslash \HS^n$. The boundary consists of two
copies of $M$. The scattering operator of the cylinder acts on $C^\infty(M)
\oplus C^\infty(M)$. It decomposes into the direct sum of endomorphisms on the
spaces $E(\mu) \oplus E(\mu)$ generated by the eigenspaces $E(\mu) = \{u \in
C^\infty(M) | -\Delta u = \mu(n\!-\!1-\!\mu)u \}$, where $\Delta$ is the
Laplacian of the hyperbolic metric on $M$. The restriction $\Sc(\lambda;\mu)$
of $\Sc(\lambda)$ to this space is given by \cite[Appendix B]{PP}
\begin{equation*}
   \Sc(\lambda;\mu) = 2^{n-2\lambda} \frac{1}{\pi} \frac{\Gamma(\frac{n}{2}\!-\!\lambda)}
   {\Gamma(\lambda\!-\!\frac{n}{2})} \Gamma(\lambda\!-\!\mu) \Gamma(\lambda\!-\!(n\!-\!1\!-\!\mu))
   \begin{pmatrix}
   \sin \pi(\frac{n}{2}-\mu) & \sin \pi(\frac{n}{2}-\lambda) \\
   \sin \pi(\frac{n}{2}-\lambda) & \sin \pi(\frac{n}{2}-\mu)
   \end{pmatrix}.
\end{equation*}
Although $\Sc(\lambda;\mu)$ contains off-diagonal terms, its residues at
$\frac{n}{2}+N$ are diagonal. More precisely, we find
\begin{equation*}
   \Res_{\lambda=\frac{n}{2}+N}(\Sc(\lambda;\mu))
   = -\frac{1}{2^{2N} N!(N\!-\!1)!} \prod_{j=\frac{n}{2}}^{\frac{n}{2}+N-1}
   (-\mu(n\!-\!1\!-\!\mu) + j(n\!-\!1\!-\!j)) \id.
\end{equation*}
This result implies the residue formula
\begin{align*}
   \Res_{\lambda=\frac{n}{2}+N}(\Sc(\lambda))
   & = -\frac{1}{2^{2N} N!(N\!-\!1)!} \prod_{j=\frac{n}{2}}^{\frac{n}{2}+N-1}
   (\Delta_M \!+\! j(n\!-\!1\!-\!j)) \\
   & = -\frac{1}{2^{2N} N!(N\!-\!1)!} P_{2N}(M,g_{\hyp})
\end{align*}
which confirms the residue formula \eqref{Res-scattering} of \cite{GZ}.

\subsection{Residue families}\label{res-fam}

We recall the concept of residue families introduced in \cite{J1}. We assume
that $g_+$ is an even Poincar\'e metric relative to $h$. For $N \in \N_0$ with
$2N \le n$ for even $n$, we define a polynomial $1$-parameter family of
differential operators $C^\infty(M_+) \to C^\infty(M)$ through
\begin{equation}\label{res-fam-even}
    D_{2N}^{res}(h;\nu) \st 2^{2N}N! \left(-\frac{n}{2}\!-\!\nu\!+\!N\right)_N
    \delta_{2N}(h;\nu\!+\!n\!-\!2N)
\end{equation}
and
\begin{equation}\label{res-fam-odd}
    D_{2N+1}^{res}(h;\nu) \st 2^{2N}N! \left(-\frac{n}{2}\!-\!\nu\!+\!N\!+\!1\right)_N
    \delta_{2N+1}(h;\nu\!+\!n\!-\!2N\!-\!1),
\end{equation}
where the family $\delta_N(h,\nu): C^\infty(M_+) \to C^\infty(M)$ is defined by
the residue formula
\begin{equation}\label{eq:DefDeltaN}
    \Res_{\lambda=-\nu-1-N} \left(\int_{M_+} r^{\lambda} u \varphi dvol(\bar{g}) \right)
    = \int_M f \delta_N(h;\nu) \varphi dvol(h).
\end{equation}
Here $u$ is an eigenfunction of $-\Delta_{g_+}$ on $M_+^\circ$ with eigenvalue
$\nu (n-\nu)$ and boundary value $f$ (Section \ref{PEandGJMS}), and we use test
functions $\varphi \in C_c^\infty(M_+)$ (\cite[(6.6.11)]{J1}). Note that
$D_0^{res}(\lambda) = \iota^*$. The above definitions yield the formula
\begin{equation}\label{deltaN}
    \delta_N(h;\nu)=\sum_{j=0}^N \frac{1}{(N-j)!}[\T_{j}^*(h;\nu) v_0 + \cdots
    + \T_0^*(h;\nu) v_j] \iota^* \partial_r^{N-j}
\end{equation}
(\cite[Definition 6.6.2]{J1}) in terms of {\em solution operators}
$\T_{2j}(h;\nu)$ and renormalized volume coefficients $v_{2j}$. Here the
operator $\T_{j}^*(h;\nu)$ denotes the formal adjoint of $\T_{j}(h;\nu)$ with
respect to the scalar product on $C^\infty(M)$ defined by $h$. Since $g_+$ is
assumed to be even, solution operators and renormalized volume coefficients
with odd indices vanish. Although the residue families $D_N^{res}(h;\nu)$ are
defined in terms of asymptotic expansions of eigenfunctions of $\Delta_{g_+}$,
the assumptions on $N$ guarantee that they only depend on the Taylor
coefficients of $h_r$ which are determined by $h$. This justifies the notation.
For even $n$, the family $D_n^{res}(h;\nu)$ sometimes is called the critical
residue family. The formula \eqref{deltaN} shows that, for even $n$, also the
odd-order residue family $D_{n+1}^{res}(h;\nu)$ is determined by $h$.

In later sections, we shall prefer to use the notation $D_N^{res}(h;\lambda)$
instead of $D_N^{res}(h;\nu)$.

\subsection{Shift operators and distributional kernels}\label{FlatShiftOperator}

We recall some results of \cite{FOS}. We regard $\R^n$ as a subspace of
$\R^{n+1}$ using the embedding $\R^n \hookrightarrow \R^{n+1}$ defined by $x
\mapsto (0,x)$. Elements of $\R^{n+1}$ are written in the form $(r,x)$. We
regard $\R^{n+1}$ as a Riemannian manifold with the flat Euclidian metric
$g_0$. The metric $g_0$ is the conformal compactification of the upper
half-space $(\R^{n+1}_+, g_{\hyp})$ with the hyperbolic metric $g_{\hyp} \st
r^{-2} g_0$. The hyperbolic metric $g_{\hyp}$ is a Poincar\'e metric $g_+$ in
normal form relative to the flat metric $h_0$ on the boundary $\R^n$ of
$\R^{n+1}_+$.

Let
\begin{equation}\label{eq: DistrKernels}
   K^+_{\lambda,\nu}(r,x) \st \abs{r}^{\lambda+\nu-n-1} (\abs{x}^2+r^2)^{-\nu}
   \quad \mbox{and} \quad
   K^-_{\lambda,\nu}(r,x) \st r K^+_{\lambda-1,\nu}(r,x)
\end{equation}
be the distributional Schwartz kernels studied in \cite{KS1,MO}. The maps
$$
   \varphi \mapsto \int_{\R^{n+1}} K^+_{\lambda,\nu}(r,x-y) \varphi(r,x) dr dx
$$
define operators $C_c^\infty(\R^{n+1}) \to C^\infty(\R^n)$ which are
equivariant with respect to principal series representations of the conformal
group of the subspace $\R^n \hookrightarrow \R^{n+1}$. Sometimes they are
referred to as {\em symmetry breaking operators} \cite{KS1}. Moreover, we set
\begin{equation}\label{eq:FlatShiftOperator}
    P(\lambda) \st r \Delta - (2\lambda-n-3) \partial_r: C^\infty(\R^{n+1}) \to C^\infty(\R^{n+1})
\end{equation}
(see \cite[(3.5)]{FOS}), where $\Delta$ is the Laplacian of the flat metric
$g_0$ on $\R^{n+1}$. The operator $P(\lambda)$ is equivariant with respect to
principal series representations for the conformal group of $\R^n$ (see
\eqref{flat-shift-equivariance}). The following result motivates us to refer to
this operator as a {\em shift operator} for the kernel
$K^\pm_{\lambda,\nu}(r,x)$ \cite[Theorem $3.5$]{FOS}.

\begin{prop}\label{FlatShiftVersion} The operator $P(\lambda)$ shifts the $\lambda$-parameter
of the kernels $K^\pm_{\lambda,\nu}(r,x)$, i.e.,
\begin{equation*}
   P(\lambda)K^\pm_{\lambda,\nu}(r,x)= (\lambda\!+\!\nu\!-\!n\!-\!1)(\nu\!-\!\lambda\!+\!1)
   K^\mp_{\lambda-1,\nu}(r,x).
\end{equation*}
\end{prop}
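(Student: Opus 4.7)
The statement is a pointwise identity of (locally integrable, jointly meromorphic) functions away from $r=0$, so the natural approach is a direct differentiation in coordinates. The single structural observation that drives the argument is parity: both summands $r\Delta$ and $\partial_r$ of $P(\lambda)$ exchange the parities in $r$, while $K^+_{\lambda,\nu}$ is even and $K^-_{\lambda,\nu}$ is odd in $r$. Hence $P(\lambda)K^+_{\lambda,\nu}$ must be odd, so a multiple of some $K^-_{\cdot,\nu}$ is the only possibility, and similarly with $+$ and $-$ swapped. Everything reduces to identifying the scalar coefficient.

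\textbf{Step 1 (setup).} Write $K^+_{\lambda,\nu} = \abs{r}^a s^{-\nu}$ with $a = \lambda+\nu-n-1$ and $s = \abs{x}^2 + r^2$. Split the flat Laplacian as $\Delta = \partial_r^2 + \Delta_x$ where $\Delta_x$ is the Euclidean Laplacian in $x \in \R^n$.

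\textbf{Step 2 (derivatives).} Differentiate directly, using $\partial_r \abs{r}^a = a r \abs{r}^{a-2}$, to obtain
\begin{equation*}
\partial_r K^+_{\lambda,\nu} = r\bigl(a\abs{r}^{a-2} s^{-\nu} - 2\nu \abs{r}^a s^{-\nu-1}\bigr),
\end{equation*}
and, after a second differentiation and combining with the straightforward computation of $\Delta_x s^{-\nu} = -2n\nu s^{-\nu-1} + 4\nu(\nu+1)\abs{x}^2 s^{-\nu-2}$,
\begin{equation*}
\Delta K^+_{\lambda,\nu} = a(a-1)\abs{r}^{a-2} s^{-\nu} + 2\nu(2\nu - 2a - n + 1)\abs{r}^a s^{-\nu-1},
\end{equation*}
where the key simplification is $\abs{r}^{a+2} s^{-\nu-2} + \abs{x}^2 \abs{r}^a s^{-\nu-2} = \abs{r}^a s^{-\nu-1}$ coming from $r^2+\abs{x}^2 = s$.

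\textbf{Step 3 (assemble $P(\lambda)$ and simplify).} Form $r\Delta K^+_{\lambda,\nu} - (2\lambda-n-3)\partial_r K^+_{\lambda,\nu}$; two groups of terms appear, with coefficients
\begin{equation*}
a\bigl(a+n+2-2\lambda\bigr) \quad\text{and}\quad 4\nu\bigl(\nu - a - n + \lambda - 1\bigr).
\end{equation*}
Substituting $a = \lambda+\nu-n-1$ gives $a+n+2-2\lambda = \nu-\lambda+1$ and $\nu-a-n+\lambda-1 = 0$, so the second group drops out and one is left with
\begin{equation*}
P(\lambda) K^+_{\lambda,\nu} = (\lambda+\nu-n-1)(\nu-\lambda+1)\, r \abs{r}^{a-2} s^{-\nu}.
\end{equation*}
Finally, $r\abs{r}^{a-2} s^{-\nu} = r \abs{r}^{\lambda+\nu-n-3}(\abs{x}^2+r^2)^{-\nu} = K^-_{\lambda-1,\nu}(r,x)$ by the definition of $K^-_{\cdot,\nu}$, proving the $+$ case. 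The $-$ case is entirely analogous: write $K^-_{\lambda,\nu} = r\abs{r}^{a-2}s^{-\nu}$ and repeat.

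\textbf{Main obstacle.} There is no conceptual obstacle here; the proof is an algebraic manipulation whose only subtle point is the cancellation at the very end, so the cleanest execution is to keep the auxiliary parameter $a$ abstract and only substitute $a = \lambda+\nu-n-1$ at the last step, so that the vanishing of one coefficient and the factorization of the other become transparent. Distributional issues at $r=0$ are absent for generic $\lambda,\nu$ (both sides are smooth off $\{r=0\}$ and locally integrable for $\Re(\lambda+\nu-n-1) > -1$), and the identity extends to all $\lambda,\nu$ by the joint meromorphic dependence of the two sides.
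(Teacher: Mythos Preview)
Your computation is correct: the direct differentiation goes through exactly as you describe, and the cancellation of the $s^{-\nu-1}$ coefficient upon substituting $a=\lambda+\nu-n-1$ is genuine. The $-$ case is indeed analogous.

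The paper does not give its own proof of this proposition; it is quoted from \cite[Theorem~3.5]{FOS}. However, the paper does prove the curved generalization (Theorem~\ref{CurvedBS}) by a different, more conceptual route that specializes to the flat case and is worth comparing with yours. Using \eqref{PD} and \eqref{NewForm} one has, on the upper half-space,
\[
   P(\lambda) = r^{\lambda-n-2}\bigl(\Delta_{g_{\hyp}}+(\lambda-1)(n-\lambda+1)\bigr)r^{n-\lambda+1}.
\]
Since the Poisson kernel $p_\nu(r,x)=(r/(|x|^2+r^2))^\nu$ satisfies $\Delta_{g_{\hyp}}p_\nu=-\nu(n-\nu)p_\nu$, applying $P(\lambda)$ to $r^{\lambda-n-1}p_\nu$ (which for $r>0$ is $K^+_{\lambda,\nu}$) collapses to the scalar $(\lambda-1)(n-\lambda+1)-\nu(n-\nu)=(\lambda+\nu-n-1)(\nu-\lambda+1)$ times $r^{\lambda-n-2}p_\nu$, i.e.\ $K^-_{\lambda-1,\nu}$. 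This explains in one stroke \emph{why} your second coefficient vanishes: it is the eigenfunction property of $p_\nu$ hiding inside the algebra. Your approach has the advantage of working directly on $\R^{n+1}$ (both signs of $r$, hence the genuine $\pm$ parity exchange), whereas the conjugation argument lives on $r>0$ and recovers the full statement on $\R^{n+1}$ only after invoking parity; conversely, the conjugation argument generalizes verbatim to the curved setting of Section~\ref{BSOperator}, which a coordinate computation cannot.
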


It was proven in \cite[Theorem 4.2]{FOS} that composition of shifted versions
of $P(\lambda)$ recover the conformal symmetry breaking differential operators
$D_N(\lambda)$ studied in \cite{J1,koss,KS1}. For a related representation
theoretical proof of this fact we refer to \cite{C}.\footnote{The operator
$P(\lambda)$ appears as (4.6) in \cite{C}.}

\subsection{The operator $\D_\lambda(g_+)$}\label{D}

Assume we are in the setting of Section \ref{setting}. In particular, $(M^n,h)$
is a Riemannian manifold and $g_+$ is a Poincar\'e metric on $M_+^\circ$ in
normal form relative to $h$. Following Graham and Zworski \cite[(4.4)]{GZ}, we
introduce a differential operator $\D_\lambda(g_+)$ on $C^\infty(M_+^\circ)$
which plays a basic role in the construction of the solution operators
$\T_{2j}(h;\lambda)$ (see Section \ref{PEandGJMS}). This operator is defined by
the equation
\begin{equation}
   r^{-(n-\lambda+1)}[\Delta_{g_+}+\lambda(n-\lambda)] r^{n-\lambda} = -\D_\lambda(g_+), \;
   \lambda \in \C
\end{equation}
and has the explicit form
\begin{equation}\label{eq:GZOperator}
   \D_\lambda(g_+) = -r \partial_r^2 + \left(2\lambda\!-\!n\!-\!1-\!\frac{r}{2}
   \tr(h_r^{-1} \dot{h}_r)\right) \partial_r
   - \frac{n\!-\!\lambda}{2} \tr(h_r^{-1}\dot{h}_r) - r \Delta_{h_r}.
\end{equation}
In the case of the hyperbolic upper-half space $(\R^{n+1}_+,g_{\hyp})$, it is
given by
\begin{equation*}
   \D_\lambda(g_{\hyp}) = - r\partial_r^2 + (2\lambda\!-\!n\!-\!1) \partial_r - r \Delta_{h_0}
   = - r \Delta_{g_0} + (2\lambda\!-\!n\!-\!1) \partial_r.
\end{equation*}
Comparing this formula with \eqref{eq:FlatShiftOperator}, yields the relation
\begin{equation}\label{PD}
   P(\lambda) = -\D_{\lambda-1}(g_{\hyp}).
\end{equation}

\subsection{The degenerate Laplacian}\label{Laplace-deg}

We recall the definition of the degenerate Laplacian introduced by Gover and
Waldron in \cite{GW}. Let $(X,c)$ be a $n+1$-dimensional conformal Riemannian
manifold equipped with a scale $\sigma \in C^\infty(X)$. Associated to these
data, we define the operator
\begin{equation}\label{eq:DegenerateLaplace}
   u \mapsto -\sigma \Delta_g u + (n\!+\!2\omega\!-\!1) \left[g(d\sigma,du)
   -\frac{\omega}{n+1}\Delta_g(\sigma) u\right]
   -\frac{2\omega}{n+1}(n\!+\!\omega)\sigma \J(g) u
\end{equation}
on $C^\infty(X)$ (see \cite[(2.9)]{GW}). Here $g$ is a metric in the conformal class
$c$ and $\omega\in\C$. The above operator will be denoted by $\ID
[g,\sigma;\omega]$. The notation $\IDD$ for the degenerate Laplacian reflects its
definition as a scalar product (in a tractor bundle) of a scale tractor $I$ and the
tractor operator $D$ mapping functions to tractors. We shall not go here into the
definitions of the relevant concepts of tractor calculus. Let $M = \sigma^{-1}(0)$
be the zero-locus of $\sigma$. We assume that $\sigma$ is a defining function for
the hypersurface $M$. Let $\iota: M \hookrightarrow X$ denote the embedding of $M$.

It follows from the definition that the operator $\iota^* \ID [g,\sigma;\omega]$
degenerates to the first-order operator
$$
   u \mapsto (n\!+\!2\omega\!-\!1) \iota^* \left[g(d\sigma,du) - \frac{\omega}{n\!+\!1}\Delta_g(\sigma) u \right].
$$
If $\sigma^{-2} g$ has constant scalar curvature $-n(n-1)$, it follows that
$|\grad_g(\sigma)|^2=1$ on $M$ and this operator reduces to the conformally
covariant Robin type boundary operator
$$
   u \mapsto (n\!+\!2\omega\!-\!1) \iota^* (\nabla_{\grad_g(\sigma)} - \omega H_g) u,
$$
where $H_g$ is the mean curvature of $M$ (\cite[Section 3.1]{Gover-AE},
\cite[Section 6.2]{J1}).

By its very definition in terms of tractor calculus, the operator $\IDD$ satisfies a
conformal covariance property. For the convenience of the reader, we provide an
independent proof of that basic property.

\begin{prop}\label{CTDL} The degenerate Laplacian satisfies
\begin{equation}\label{CT-ID}
   \ID[e^{2\varphi}g,e^{\varphi}\sigma;\omega] \circ e^{\omega\varphi}
   = e^{(\omega-1)\varphi} \circ \ID[g,\sigma;\omega], \; \omega \in \C
\end{equation}
for all metrics $g$, scales $\sigma \in C^\infty(X)$ and $\varphi \in
C^\infty(X)$. Moreover,
\begin{equation}\label{natural}
   \ID[\kappa^*(g),\kappa^*(\sigma);\omega] = \kappa^* \circ \ID[g;\sigma;\omega] \circ \kappa_*
\end{equation}
for any diffeomorphism $\kappa$ of $X$. Here $\kappa_*\st (\kappa^{-1})^*$.
\end{prop}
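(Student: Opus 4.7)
The plan is to verify both parts of Proposition~\ref{CTDL} by direct computation, using standard conformal transformation formulas together with the naturality of the underlying geometric objects under pullback.

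The naturality statement \eqref{natural} follows immediately from the corresponding naturality of each ingredient of \eqref{eq:DegenerateLaplace}: $\Delta_{\kappa^{*}g}(\kappa^{*}v) = \kappa^{*}(\Delta_{g}v)$, $(\kappa^{*}g)(d\kappa^{*}\sigma, d\kappa^{*}u) = \kappa^{*}(g(d\sigma,du))$, and $\J(\kappa^{*}g) = \kappa^{*}(\J(g))$. Substituting these identities into \eqref{eq:DegenerateLaplace} and pulling $\kappa^{*}$ through the scalar multiplications yields \eqref{natural}.

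For the conformal covariance \eqref{CT-ID}, I would set $\hat g = e^{2\varphi}g$, $\hat\sigma = e^{\varphi}\sigma$, $\hat u = e^{\omega\varphi}u$ and compute each of the four terms of $\ID[\hat g, \hat\sigma; \omega](\hat u)$ using the transformation laws on $X$ of dimension $n+1$, namely $\Delta_{\hat g}v = e^{-2\varphi}[\Delta_{g}v + (n-1)g(d\varphi,dv)]$, $\hat g(\alpha,\beta) = e^{-2\varphi}g(\alpha,\beta)$, and $\J(\hat g) = e^{-2\varphi}[\J(g) - \Delta_{g}\varphi - \tfrac{n-1}{2}|d\varphi|_{g}^{2}]$, combined with the Leibniz rule for $\Delta_{g}$ and $g(d\cdot,d\cdot)$ applied to the products $e^{\varphi}\sigma$ and $e^{\omega\varphi}u$. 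Each of the four contributions picks up $e^{(\omega-1)\varphi}$ as a common prefactor, and after factoring it out the $\varphi$-independent part of the sum manifestly reproduces $\ID[g,\sigma;\omega]u$.

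It remains to check that the $\varphi$-dependent remainder vanishes. Organizing the remainder by the scalar quantity each term carries produces four cancellation blocks, proportional respectively to $\sigma g(d\varphi, du)$, $u\,g(d\varphi, d\sigma)$, $\sigma u\, \Delta_{g}\varphi$, and $\sigma u\,|d\varphi|_{g}^{2}$. The first two blocks cancel between the expansions of $-\hat\sigma\Delta_{\hat g}\hat u$ and $(n+2\omega-1)\hat g(d\hat\sigma, d\hat u)$, using only the factor $(n+2\omega-1)$. The block with $\sigma u\,\Delta_{g}\varphi$ collects contributions from $-\hat\sigma\Delta_{\hat g}\hat u$, from $-\tfrac{\omega(n+2\omega-1)}{n+1}\hat u\,\Delta_{\hat g}\hat\sigma$, and from the Schouten transformation, and cancels via the identity $-(n+2\omega-1)+2(n+\omega)=n+1$. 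The most delicate block is the one proportional to $\sigma u |d\varphi|_{g}^{2}$: it receives contributions from all four terms and cancels via the identity $-n(n+2\omega-1)+(n-1)(n+\omega) = -(n+1)\omega$. The main obstacle is precisely this bookkeeping: the particular coefficients $\omega/(n+1)$ and $2\omega(n+\omega)/(n+1)$ in \eqref{eq:DegenerateLaplace} are tuned so that these two identities hold, and a disciplined organization by geometric block is what makes the cancellation visible.
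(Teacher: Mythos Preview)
Your approach is essentially identical to the paper's: both proofs handle \eqref{natural} by naturality of the ingredients and verify \eqref{CT-ID} by plugging in the standard conformal transformation laws for $\Delta_g$ and $\J(g)$, then checking that the coefficients of $\sigma g(d\varphi,du)$, $u\,g(d\varphi,d\sigma)$, $\sigma u\,\Delta_g\varphi$, and $\sigma u\,|d\varphi|_g^2$ all vanish. One small bookkeeping slip: the block $u\,g(d\varphi,d\sigma)$ does not receive a contribution from $-\hat\sigma\Delta_{\hat g}\hat u$ (that term never differentiates $\sigma$); the cancellation is instead between $(n+2\omega-1)\hat g(d\hat\sigma,d\hat u)$ and $-\tfrac{\omega(n+2\omega-1)}{n+1}\hat u\,\Delta_{\hat g}\hat\sigma$, which is exactly how the paper records it.
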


\begin{proof} The first claim follows by a straightforward computation involving the
standard identities
\begin{align}
   \Delta_{e^{2\varphi}g} & =e^{-2\varphi}[\Delta_g+(n-1)g(\grad_g(\varphi),\grad_g(\cdot))], \label{A} \\
   \J(e^{2\varphi} g) & = e^{-2\varphi} \left[\J(g) - \Delta_g\varphi-\frac{n\!-\!1}{2}\abs{d\varphi}_g^2\right],
   \notag \\
   \Delta_g(e^{\omega \varphi}) & = \omega e^{\omega\varphi}\Delta_g \varphi+\omega^2 e^{\omega \varphi}
   \abs{d\varphi}_g^2, \notag \\
   \grad_{e^{2\varphi}g}(f) & = e^{-2\varphi}\grad_g(f), \notag \\
   \Delta_g(f_1 f_2) & = \Delta_g(f_1) f_2 + f_1 \Delta_g(f_2) + 2g(\grad_g(f_1),\grad_g(f_2)), \notag \\
   \grad_g(f_1 f_2) & = \grad_g(f_1) f_2 + f_1 \grad_g(f_2) \notag
\end{align}
for any $f,f_1,f_2\in C^\infty(X)$. These relations imply
\begin{align*}
   & e^{-(\omega-1)\varphi} \circ \ID[e^{2\varphi}g,e^{\varphi}\sigma;\omega] \circ e^{\omega\varphi}\\
   & = \ID[g,\sigma;\omega]+ \left[-\omega - \frac{\omega}{n\!+\!1} (2\omega\!+\!n\!-\!1) +
   \frac{2\omega}{n\!+\!1}(n\!+\!\omega)\right] \sigma \Delta_g\varphi\\
   & + \big[-(2\omega\!+\!n\!-\!1)+(2\omega+n-1)\big]\sigma g(\grad_g(\varphi),\grad_g(\cdot))\\
   & + \left[-\omega(\omega\!+\!n\!-\!1)+\omega(2\omega\!+\!n\!-\!1)-\frac{n\omega (2\omega\!+\!n\!-\!1)}{n\!+\!1}
   + \frac{\omega (n\!-\!1)(n\!+\!\omega)}{n\!+\!1}\right]\sigma \abs{d\varphi}_g^2\\
   & + \left[\omega(2\omega\!+\!n\!-\!1)-(n\!+\!1)\frac{\omega(2\omega\!+\!n\!-\!1)}{n\!+\!1}\right]
   g(\grad_g(\varphi),\grad_g(\sigma))\\
   & =\ID[g,\sigma;\omega].
\end{align*}
The second claim is immediate by construction. The proof is complete.
\end{proof}

For $(X,g)$ being the flat Euclidean space $(\R^{n+1},g_0)$ and the scale $r$
with zero-locus $\R^n$ (as in Section \ref{FlatShiftOperator}), the degenerate
Laplacian $\IDD$ reduces to
\begin{equation*}
   \ID[g_0,r;\lambda\!-\!n\!-\!1] = -r\Delta_{g_0} + (2\lambda\!-\!n\!-\!3) \partial_r.
\end{equation*}
Hence
\begin{equation}\label{PID}
   P(\lambda) = - \ID[g_0;r,\lambda\!-\!n\!-\!1],
\end{equation}
i.e., the shift operator $P(\lambda)$ is a special case of $\IDD$.

\section{A curved version of the shift operator}\label{BSOperator}

Now we extend the definition of the shift operators $P(\lambda)$ (in Section
\ref{FlatShiftOperator}) to the setting of Section \ref{PEandGJMS}. Thus, we
assume that $(M^n,h)$ is a Riemannian manifold of dimension $n\ge 3$ and we let
$g_+ = r^{-2}(dr^2+h_r)$ be an even Poincar\'e metric in normal form relative
to $h$ on $M_+^\circ = (0,\varepsilon) \times M$ (for some $\varepsilon > 0$).
In particular, that means that, for odd $n$, $h_r$ has an expansion
$$
   h_r = h +r^2 h_2 + r^4 h_4 + \cdots,
$$
where all coefficients $h_{2j}$ are determined by $h$, and for even $n$ in the
expansion
$$
   h_r = h +r^2 h_2 + \cdots + r^n h_n + r^{n+2} h_{n+2} + \dots
$$
the coefficients $h_{\le n-2}$ and $\tr_h(h_n)$ are determined by $h$. We
recall the notation $\bar{g} = r^2 g_+$. The following definition corresponds
to \eqref{shift-curved}.

\begin{defn}\label{sop} The second-order differential
operator
\begin{equation}\label{eq:BSOperators}
   S(g_+;\lambda) \st r\Delta_{\bar{g}}-(2\lambda\!-\!n\!+\!1)\partial_r -
   \frac{1}{2}(\lambda\!-\!n\!+\!1) \tr(h_r^{-1}\dot{h}_r)
\end{equation}
is called a {\em shift operator}. Here the dot denotes the derivative with
respect to $r$.
\end{defn}

The notion {\em shift operator} is motivated by Theorem \ref{CurvedBS}.

We shall regard $S(g_+;\lambda)$ as an operator $C^\infty(M_+^\circ) \to
C^\infty(M_+^\circ)$. Since $\bar{g}$ is a smooth metric on $M_+ = [0,
\varepsilon) \times M$, the shift operators may also be regarded as operators
on $C^\infty(M_+)$, and mapping properties on $M_+^\circ$ naturally extend to
$M_+$.

We emphasize that the shift operator $S(g_+;\lambda)$ is not completely
determined by the Taylor coefficients of $h_r$. The composition $\iota^*
S(g_+;\lambda)$ degenerates to a first-order operator.

By \eqref{v-trace}, the shift operator can also be written in the form
\begin{equation}\label{shift-op-2}
   S(g_+;\lambda) \st r \Delta_{\bar{g}}-(2\lambda\!-\!n\!+\!1)\partial_r -
   (\lambda\!-\!n\!+\!1) \dot{v}(r)/v(r).
\end{equation}
The following result describes the relations among the three operators
$\D_\lambda$, $\ID[\cdot;\lambda]$ and $S(\cdot;\lambda)$.

\begin{prop}\label{EquivalentOperators} It holds
\begin{equation}\label{Sand D}
    S(g_+;\lambda) = -\D_{\lambda+1}(g_+).
\end{equation}
Moreover, we have
\begin{equation}\label{SandID}
    S(g_+;\lambda) = - \ID[\bar{g};r,\lambda-n+1]
\end{equation}
if $g_+$ is Einstein.
\end{prop}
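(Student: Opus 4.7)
My plan is to prove both identities by direct expansion, using the explicit formulas gathered in the Preliminaries.

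\textbf{Part 1: the identity $S(g_+;\lambda) = -\D_{\lambda+1}(g_+)$.} This is essentially an algebraic check. I would start from the expression \eqref{eq:CCLaplacian}, namely $\Delta_{\bar g} = \Delta_{h_r} + \partial_r^2 + \tfrac12 \tr(h_r^{-1}\dot h_r)\partial_r$, and substitute it into the definition \eqref{eq:BSOperators} of $S(g_+;\lambda)$ to obtain
\[
   S(g_+;\lambda) = r\Delta_{h_r} + r\partial_r^2 + \tfrac{r}{2}\tr(h_r^{-1}\dot h_r)\partial_r -(2\lambda-n+1)\partial_r - \tfrac{\lambda-n+1}{2}\tr(h_r^{-1}\dot h_r).
\]
On the other hand, shifting $\lambda \mapsto \lambda+1$ in \eqref{eq:GZOperator} and flipping signs produces exactly the same four summands (noting $2(\lambda+1)-n-1 = 2\lambda-n+1$ and $-(n-\lambda-1)/2 = -(\lambda-n+1)/2 \cdot(-1)\cdot(-1)$, with the appropriate sign flip from the overall minus). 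A term-by-term comparison settles the identity.

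\textbf{Part 2: the identity $S(g_+;\lambda) = -\ID[\bar g;r,\lambda-n+1]$ for Einstein $g_+$.} Here I would apply the definition \eqref{eq:DegenerateLaplace} with $g=\bar g$, $\sigma=r$, $\omega = \lambda-n+1$ (note that $M_+$ has dimension $n+1$, so the "$n$" in \eqref{eq:DegenerateLaplace} really is $n$). The ingredients are:
\begin{itemize}
\item $\bar g(dr,du) = \partial_r u$, since $\bar g = dr^2 + h_r$;
\item $\Delta_{\bar g}(r) = \tfrac12 \tr(h_r^{-1}\dot h_r)$, by direct substitution into \eqref{eq:CCLaplacian};
\item the arithmetic $n+2\omega-1 = 2\lambda-n+1$ and $n+\omega = \lambda+1$.
\end{itemize}
Collecting these, $-\ID[\bar g;r,\lambda-n+1]$ equals the right-hand side of \eqref{eq:BSOperators} modulo the identity
\[
   \tr(h_r^{-1}\dot h_r) + 2r\,\J(\bar g) = 0.
\]

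\textbf{The main step} is therefore to verify this last identity under the Einstein assumption. My plan is to apply the conformal change formula for $\J$ (from the list \eqref{A}) to $g_+ = e^{-2\log r}\bar g$, taking $\varphi = -\log r$ and using that $\bar g$ lives on an $(n+1)$-dimensional manifold, so the coefficient is $(n-1)/2$. Using $\Delta_{\bar g}\log r = -1/r^2 + \tfrac{1}{2r}\tr(h_r^{-1}\dot h_r)$ and $|d\log r|^2_{\bar g} = 1/r^2$, one finds
\[
   \J(g_+) = r^2 \J(\bar g) - 1 + \tfrac{r}{2}\tr(h_r^{-1}\dot h_r) - \tfrac{n-1}{2}.
\]
When $g_+$ is Einstein with $\Ric(g_+) = -n g_+$, one has $\J(g_+) = -(n+1)/2$, and the constants cancel neatly to give $r^2 \J(\bar g) = -\tfrac{r}{2}\tr(h_r^{-1}\dot h_r)$, which is the desired identity. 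The main obstacle is purely bookkeeping: keeping track of the two dimensions ($n$ for $M$ and $n+1$ for $M_+$) and the sign conventions so that the $(\lambda-n+1)$ factor in \eqref{eq:DegenerateLaplace} correctly cancels out between the two curvature-dependent terms.
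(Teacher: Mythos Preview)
Your proposal is correct and follows essentially the same route as the paper: both parts are direct expansions, and the crux of Part~2 is the identity $\J(\bar g)=-\tfrac{1}{2r}\tr(h_r^{-1}\dot h_r)$, which you establish via the conformal transformation law for $\J$ and the Einstein condition. The paper does the same, citing this identity as \eqref{Jbar} in the main argument and then supplying the very computation you outline (phrased in terms of $\tau$ rather than $\J$) in the remark immediately following the proof.
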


\begin{proof} We recall the expressions \eqref{eq:PELaplacian} and
\eqref{eq:GZOperator} for $\Delta_{g_+}$ and $\D_\lambda(g_+)$. Moreover, by
\eqref{eq:CCLaplacian}, we have
\begin{equation*}
    \Delta_{\bar{g}} = r^{-2}(\Delta_{g_+} + (n-1) r\partial_r).
\end{equation*}
Combining these formulas, we obtain
\begin{align*}
   S(g_+;\lambda) & = r^{-1}(\Delta_{g_+} + (n-1)r\partial_r) - (2\lambda\!-\!n\!+\!1)\partial_r -
   \frac 12(\lambda\!-\!n\!+\!1) \tr(h^{-1}_r\dot{h}_r) \\
   & = r\Delta_{h_r} + r\partial_r^2 + \frac12 \tr(h^{-1}_r\dot{h}_r) r\partial_r -
   (2\lambda\!-\!n\!+\!1)\partial_r - \frac 12(\lambda\!-\!n\!+\!1) \tr(h^{-1}_r\dot{h}_r)\\
   & = r\Delta_{h_r} + r\partial_r^2 - \left(2\lambda\!-\!n\!+\!1-\!\frac{r}{2} \tr(h^{-1}_r\dot{h}_r)\right)
   \partial_r - \frac 12(\lambda\!-\!n\!+\!1) \tr(h^{-1}_r\dot{h}_r) \\
   & = -\D_{\lambda+1}(g_+).
\end{align*}
This proves the first claim. The second claim follows by evaluating $\IDD$ for
$g=\bar{g}$ and $\sigma=r$. In particular, using
$$
   \Delta_{\bar{g}}(r) = \frac{1}{2} \tr(h^{-1}_r\dot{h}_r)
$$
and the relation
\begin{equation}\label{Jbar}
   \J(\bar{g}) = -\frac{1}{2r}\tr(h^{-1}_r \dot{h}_r)
\end{equation}
(\cite[(6.11.8)]{J1}), we find
\begin{align*}
   \ID[\bar{g};r,\omega] & =-r\Delta_{\bar{g}}
   + (n\!+\!2\omega\!-\!1) \left(\partial_r-\frac{\omega}{n+1}\Delta_{\bar{g}}(r)\right)
   -\frac{2\omega}{n+1}(n\!+\!\omega)r \J(\bar{g})\\
   & =-r\Delta_{\bar{g}}+(n\!+\!2\omega\!-\!1)\partial_r+\frac{\omega}{2}\tr(h^{-1}_r\dot{h}_r).
\end{align*}
Hence $S(g_+;\lambda)= -\ID[\bar{g};r,\lambda-n+1]$. The proof is complete.
\end{proof}

\begin{remark} The identity \eqref{Jbar} is not valid for general Poincar\'e metrics $g_+$ since these
are only asymptotically Einstein. For the convenience of the reader, we insert
a proof of \eqref{Jbar} for Einstein $g_+$ which also clarifies the
modification in the general case. We recall the well-known transformation
formula
\begin{equation}\label{reverse}
   \tau(\hat{g}) = e^{-2\varphi} \tau(g) - 2n \Delta_{\hat{g}}(\varphi) + n(n\!-\!1)
   |d\varphi|_{\hat{g}}^2, \quad \hat{g} = e^{2\varphi}g
\end{equation}
for the scalar curvature. We apply \eqref{reverse} to $\hat{g} = dr^2 \!+\!
h_r$, $g = r^{-2}(dr^2\!+\!h_r)$ and $\varphi = \log r$. For Einstein $g$,
i.e., $\Ric(g) + ng = 0$, we have $\tau(g) = -n(n\!+\!1)$. Hence we find
\begin{align*}
   \tau(dr^2 \!+\! h_r) & = - \frac{n(n\!+\!1)}{r^2} - 2n \Delta_{dr^2+h_r}(\log r)
   + n(n\!-\!1) |d\log r|^2_{dr^2+h_r} \nonumber \\
   & = - \frac{n(n\!+\!1)}{r^2} - 2n \left(\partial_r^2 (\log r)
   + \frac{1}{2} \tr (h_r^{-1} \dot{h}_r) \partial_r (\log r)\right)
   + \frac{n(n\!-\!1)}{r^2}  \nonumber \\
   & = - n \tr (h_r^{-1} \dot{h}_r) \frac{1}{r}.
\end{align*}
Hence
\begin{equation*}\label{tau-taylor}
   \J(dr^2\!+\!h_r) = -\frac{1}{2r} \tr (h_r^{-1} \dot{h}_r).
\end{equation*}
We also note that, in \cite{GW}, the authors deal with Einstein metrics outside
hypersurfaces in Riemannian manifolds. In particular, the calculation at the
end of \cite[Section 5]{GW} employs the formula \eqref{Jbar}.
\end{remark}

\begin{remark}\label{conjugation} By the first part of Proposition
\ref{EquivalentOperators}, we have
\begin{equation}\label{NewForm}
   S(g_+;\lambda) = r^{\lambda-n} \circ (\Delta_{g_+}+(\lambda\!+\!1)(n\!-\!\lambda\!-\!1))
   \circ r^{n-\lambda-1}
\end{equation}
as an identity of operators acting on $C^\infty(M_+^\circ)$. Sometimes, this
formula for $S(g_+;\lambda)$ will be more convenient to work with than the
original definition \eqref{eq:BSOperators}. However, the original definition
has the advantage that it clearly shows that $S(g_+;\lambda)$ acts on smooth
functions on $M_+$. In order to illustrate the convenience of the conjugation
formula \eqref{NewForm}, we note that it yields a slightly more conceptual
proof of \eqref{Jbar}. We use the fact that for Einstein $g_+$
$$
   P_2(g_+) = \Delta_{g_+} + m(m-1)
$$
with $m = \frac{n+1}{2}$. Hence
\begin{equation*}
   P_2(\bar{g}) = r^{-m-1} P_2(g_+) r^{m-1} = r^{-m-1} (\Delta_{g_+} + m(m-1)) r^{m-1}
   = r^{-1} S\left(g_+;\frac{n-1}{2}\right).
\end{equation*}
This identity is the special case $N=1$ of Theorem \ref{TC}. On the function
$1$ it yields \eqref{Jbar}.
\end{remark}

\begin{remark}\label{AE} For even $n$, the condition $\Ric(g_+)+ng_+ = O(r^{n-2})$
implies $\tau(g_+) = -n(n+1) + O(r^n)$. Hence an extension of the arguments in
Remark \ref{conjugation} gives
\begin{align*}
   P_2(g_+) = \Delta_{g_+} + m(m-1) + O(r^n) \quad \mbox{and} \quad
   P_2(\bar{g}) = r^{-1} S\left(g_+;\frac{n-1}{2}\right) + O(r^{n-2}).
\end{align*}
The trace condition \eqref{VT} improves the estimate of the scalar curvature to
$\tau(g_+) = -n(n+1) + o(r^n)$ and we obtain
$$
   r P_2(\bar{g}) = S \left(g_+;\frac{n-1}{2}\right) + o(r^{n-1}).
$$
This is the special case $N=1$ of Theorem \ref{TC} for Poincar\'e metrics.
\end{remark}

We continue with the discussion of the {\em shift property} of the shift
operators. Its description requires some more notation. We recall that
$M_+^\circ = (0,\varepsilon) \times M$ and $u \in C^\infty(M_+^\circ)$ be a
solution to the equation
\begin{equation}\label{eigen}
   \Delta_{g_+} u + \nu(n-\nu) u = 0 \quad \mbox{for $\nu \in n/2 + i\R$, $\nu \ne n/2$}
\end{equation}
with {\em boundary value} $f \in C^\infty(M)$ (see Section \ref{PEandGJMS}).
Such an {\em exact} eigenfunction can be constructed as described in Section
\ref{PEandGJMS} by regarding $M_+$ as part of a conformally compact manifold.
Then $u$ gives rise to the family (see \cite[Section 6.6]{J1})
\begin{equation}\label{eq:TheDistribution}
   \lambda \mapsto M_u(r;\lambda) \in C^\infty(M_+^\circ),
   \quad M_u(r;\lambda) \st r^{\lambda-n+1}u, \; \lambda \in \C.
\end{equation}
We shall consider $M_u(r;\lambda)$ as a family of functions on $M_+^\circ$ as
well as a family of distributions on $M_+$. In the latter case, we have
$$
   \langle M_u(r;\lambda), \varphi \rangle = \int_{M_+} r^{\lambda-n+1} u
   \varphi dvol(\bar{g}), \quad \Re(\lambda) > n/2-2,
$$
where the test functions $\varphi$ are in $C_c^\infty(M_+)$ and the condition
$\Re(\lambda) > \frac{n}{2}-2$ guarantees the convergence of the integral. Then
$M_u(r;\lambda)$ will be understood as a meromorphic family of distributions.
For simplicity, we shall denote that family of distributions also by
$M_u(r;\lambda)$. Finally, we introduce the multiplication operators
\begin{equation}\label{mult}
   M_r \st r\cdot.
\end{equation}

\begin{remark}\label{Muaskernel} We consider the distribution $M_u(r;\lambda)$
in the flat case. Let $u \in C^\infty(\HS^{n+1})$ be an eigenfunction of the
Laplacian on the hyperbolic upper-half space $\HS^{n+1}$. We assume that $u$
can be written as Helgason's Poisson transform \cite{Hel}
\begin{equation}\label{PT}
   u(r,x) = \int_{\R^n} \left(\frac{r}{|x-y|^2+r^2}\right)^\nu f(y) dy
\end{equation}
of $f \in C_0^\infty(\R^n)$, say. Then
\begin{align*}
   \langle M_u(r;\lambda), \varphi \rangle & = \int_{\HS^{n+1}}
   r^{\lambda-n+1} u(r,x) \varphi(r,x) dr dx \\
   & = \int_{\HS^{n+1}} \int_{\R^n} r^{\lambda-n+1}
   \left(\frac{r}{|x-y|^2+r^2}\right)^\nu f(y) \varphi(r,x) dy dr dx  \\
   & = \int_{\R^n} f(y) \left( \int_{\HS^{n+1}} K^+_{\lambda-2,\nu}(r,x-y) \varphi(r,x) dx
   dr \right) dy.
\end{align*}
The latter formula shows in which sense \eqref{eq:TheDistribution} can be
regarded as a generalization of the distributional kernels
$K_{\lambda,\nu}^\pm(r,x)$ (see \eqref{eq: DistrKernels}). We also note that
the asymptotic expansion of $u$ given by \eqref{PT} is of the form
$$
   r^\nu \sum_{j \ge 0} r^{2j} a_{2j}(x) +
   r^{n-\nu} c(\lambda) \sum_{j\ge 0} r^{2j} b_{2j}(x), \; r \to 0,
$$
where $b_0=f$ and $c(\lambda)$ is Harish-Chandra's $c$-function. In particular,
the boundary value $f$ appears in the coefficient of $r^{n-\lambda}$. Note that
this notion of boundary value slightly differs from that used in Section
\ref{PEandGJMS} by the coefficient $c(\lambda)$.
\end{remark}

Now we are ready to state and prove the {\em shift property} of the operators
$S(g_+;\lambda)$. The following result generalizes \cite[Theorem $3.5$]{FOS}.
Following the terminology of \cite{FOS}, it may be referred to as a
Bernstein-Sato identity.

\begin{theorem}\label{CurvedBS} Let $(M^n,h)$ be a Riemannian manifold. Let
$u$ be a solution of \eqref{eigen}. Then the multiplication operator $M_r$ and
the shift operator $S(g_+;\lambda)$ shift the $\lambda$-parameter when acting
on the function $M_u(r;\lambda)$, i.e.,
\begin{align*}
   M_r (M_u(r;\lambda)) & = M_u(r;\lambda+1),\\
   S(g_+;\lambda)(M_u(r;\lambda)) & =(\lambda\!+\!\nu\!-\!n\!+\!1)(\nu\!-\!\lambda\!-\!1)M_u(r;\lambda-1).
\end{align*}
\end{theorem}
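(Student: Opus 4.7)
The first identity is immediate: by definition $M_u(r;\lambda) = r^{\lambda-n+1}u$, so multiplication by $r$ simply raises the exponent by $1$, giving $M_u(r;\lambda+1)$. This part requires no input from the eigenvalue equation.

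For the shift property of $S(g_+;\lambda)$, the plan is to exploit the conjugation formula from Remark \ref{conjugation}:
\begin{equation*}
   S(g_+;\lambda) = r^{\lambda-n} \circ \bigl(\Delta_{g_+} + (\lambda+1)(n-\lambda-1)\bigr) \circ r^{n-\lambda-1}.
\end{equation*}
The point is that the outer factor $r^{n-\lambda-1}$ exactly cancels the $r$-power in $M_u(r;\lambda) = r^{\lambda-n+1}u$ up to a single factor of $r$. Indeed, $r^{n-\lambda-1}\cdot r^{\lambda-n+1} = 1$, so after applying the right-most factor I am left with $u$ alone, to which I can apply the eigenvalue equation $\Delta_{g_+}u = -\nu(n-\nu)u$.

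Carrying this out gives
\begin{equation*}
   S(g_+;\lambda) M_u(r;\lambda) = r^{\lambda-n}\bigl(-\nu(n-\nu) + (\lambda+1)(n-\lambda-1)\bigr) u,
\end{equation*}
and since $r^{\lambda-n}u = M_u(r;\lambda-1)$, it remains only to check the scalar identity
\begin{equation*}
   (\lambda+1)(n-\lambda-1) - \nu(n-\nu) = (\lambda+\nu-n+1)(\nu-\lambda-1).
\end{equation*}
This is a routine expansion: both sides equal $\nu^2 - n\nu + n\lambda - \lambda^2 - 2\lambda + n - 1$. Thus the claimed constant $(\lambda+\nu-n+1)(\nu-\lambda-1)$ emerges automatically.

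The only subtle point concerns the interpretation of $M_u(r;\lambda)$ as a distribution on $M_+$ when $\Re(\lambda)$ is small. Since both sides of the identity are meromorphic families of distributions in $\lambda$ and agree as smooth functions on $M_+^\circ$ for all $\lambda$, the distributional identity follows by analytic continuation from $\Re(\lambda) > n/2 -2$, where the integral defining $\langle M_u(r;\lambda), \varphi\rangle$ converges absolutely. I do not expect any real obstacle here; the argument is essentially a one-line conjugation calculation plus the eigenvalue equation.
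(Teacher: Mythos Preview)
Your proof is correct and follows the same approach as the paper: both use the conjugation formula \eqref{NewForm} from Remark \ref{conjugation} to reduce the computation to applying the eigenvalue equation for $u$, then verify the resulting scalar identity. Your additional remark on the distributional interpretation via analytic continuation is not needed for the theorem as stated (which concerns the identity of smooth functions on $M_+^\circ$), but it does no harm.
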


\begin{proof} The first claim is obvious from the definitions. Using
\eqref{NewForm} and the fact that $u$ satisfies \eqref{eigen}, we compute
\begin{align*}
   S(g_+;\lambda)(M_u(r;\lambda)) & = r^{\lambda-n} \Delta_{g_+}(r^{n-\lambda-1} r^{\lambda-n+1} u)
   + (\lambda\!+\!1)(n\!-\!\lambda\!-\!1) r^{\lambda-n} u\\
   & = -\nu(n\!-\!\nu) r^{\lambda-n}u + (\lambda\!-\!n\!+\!1)(-\lambda\!-\!1) r^{\lambda-n}u\\
   & = (\lambda\!+\!\nu\!-\!n\!+\!1)(\nu\!-\!\lambda\!-\!1) M_u(r;\lambda-1).
\end{align*}
The proof is complete.
\end{proof}

From Theorem \ref{CurvedBS} we easily deduce the poles of the meromorphic
continuation of the holomorphic family of distributions
$$
   C_c^\infty(M_+) \ni \varphi \mapsto \int_{M_+} M_u(r;\lambda) \varphi dvol(\bar{g}), \quad
   \Re(\lambda) > \frac{n}{2}-2.
$$

\begin{cor}\label{mero} The family $\lambda \mapsto M_u(r;\lambda)$ is meromorphic
with generically simple poles at\footnote{Here $\nu$ is generic if both ladders
of poles in \eqref{eq:Poles} do not intersect. For $\nu \in \frac{n}{2}+i\R$
being {\em generic} it suffices to assume that $\nu \ne \frac{n}{2}$.}
\begin{equation}\label{eq:Poles}
   \lambda = -\nu\!+\!n\!-\!2\!-\!N \quad \mbox{and} \quad \lambda=\nu\!-\!2\!-\!N
\end{equation}
for $N\in\N_0$.
\end{cor}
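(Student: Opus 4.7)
The plan is to combine the asymptotic expansion \eqref{ansatz} of $u$ near $r=0$ with the Bernstein--Sato-type identity of Theorem \ref{CurvedBS}, rewritten as a recurrence that I will use both to extend $M_u(r;\lambda)$ meromorphically and to pin down the location of its poles.

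First, I would verify convergence on the initial domain. Since $\Re(\nu)=n/2$, the expansion \eqref{ansatz} gives $u=O(r^{n/2})$ near the boundary. With $dvol(\bar g)=v(r)\,dr\,dvol(h)$, the integrand in $\langle M_u(r;\lambda),\varphi\rangle$ is integrable and depends holomorphically on $\lambda$ precisely when $\Re(\lambda)>n/2-2$, which is the initial half-plane stated before the corollary.

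Next, I would rewrite Theorem \ref{CurvedBS} in the inverted form
\begin{equation*}
   M_u(r;\mu) = \frac{1}{(\mu+\nu-n+2)(\nu-\mu-2)}\, S(g_+;\mu+1)\, M_u(r;\mu+1),
\end{equation*}
and iterate $N$ times to obtain
\begin{equation*}
   M_u(r;\mu) = \frac{S(g_+;\mu+1)\circ\cdots\circ S(g_+;\mu+N)\,M_u(r;\mu+N)}
                     {\prod_{j=1}^{N}(\mu+j+\nu-n+1)(\nu-\mu-j-1)}.
\end{equation*}
Given any target $\mu\in\C$, choosing $N$ with $\Re(\mu+N)>n/2-2$ places $M_u(r;\mu+N)$ in the initial half-plane of the previous step, so the right-hand side is a well-defined meromorphic object in $\mu$ (the numerator is holomorphic since $S(g_+;\cdot)$ depends polynomially on its parameter). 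Uniqueness of meromorphic continuation makes this extension independent of $N$.

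Finally, the poles of the extension can arise only from zeros of the denominator, which sit at $\mu=n-\nu-1-j$ and $\mu=\nu-1-j$ for $j\geq 1$; reindexing by $N=j-1\in\N_0$ recovers the two ladders in \eqref{eq:Poles}. Under the genericity condition on $\nu$ the two ladders are disjoint and each denominator factor is a simple linear zero, so the poles are generically simple. The hard part will be to verify that these poles are genuine and are not cancelled by hidden zeros of the numerator; this reduces to the non-vanishing of the leading asymptotic coefficients $a_0(\nu)$ and $b_0(\nu)$ in \eqref{ansatz} at the corresponding values, which holds under the genericity hypothesis on $\nu\in n/2+i\R$ with $\nu\neq n/2$.
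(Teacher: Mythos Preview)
Your proof is correct and follows essentially the same route as the paper: invert the shift identity of Theorem \ref{CurvedBS} to obtain the recurrence $M_u(r;\lambda)=\frac{1}{(\lambda+\nu-n+2)(\nu-\lambda-2)}S(g_+;\lambda+1)M_u(r;\lambda+1)$ and iterate to push the domain of holomorphy to the left one unit at a time, reading off the pole locations from the denominators. The paper's argument is a bit terser (it does not rederive the initial convergence and does not discuss whether the indicated poles are actually attained), and it notes afterward that one could alternatively prove the result by direct insertion of the asymptotic expansion \eqref{ansatz}; your final paragraph about genuineness of the poles goes slightly beyond what the corollary asserts and what the paper verifies, but it does no harm.
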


\begin{proof} Indeed, Theorem \ref{CurvedBS} implies
$$
   M_u(r;\lambda) = \frac{1}{(\lambda\!+\!\nu\!-\!n\!+\!2)(\nu\!-\!\lambda\!-\!2)}
   S(g_+;\lambda+1)(M_u(r;\lambda+1)).
$$
The distribution on the left-hand side of this identity is holomorphic in the
half-plane $\Re(\lambda) > \frac{n}{2}-2$. The right-hand side provides a
meromorphic continuation to $\Re(\lambda) > \frac{n}{2}-3$ with simple poles in
$\lambda=-\nu+n+2$ and $\lambda=\nu-2$. The assertion follows by a repeated
application of that argument.
\end{proof}

Alternatively, Corollary \ref{mero} can be proved by directly inserting the
asymptotic expansion of $u$ into the integral which defines the distribution
$M_u(r;\lambda)$ for $\Re(\lambda) > \frac{n}{2}-2$. The above argument using
Theorem \ref{CurvedBS} is more conceptual, however.

Next, we discuss a conformal transformation law for shift operators. Let
$\hat{h} = e^{2\varphi} h$ be conformally equivalent to $h$. We choose an even
Poincar\'e metric $g_+$ in normal form relative to $h$ and let $\hat{g}_+ =
\kappa^*(g_+)$ be a related even Poincar\'e metric in normal form relative to
$\hat{h}$; for the construction of the diffeomorphism $\kappa$ we refer to
Section \ref{PEandGJMS}. In these terms, we have the following result.

\begin{prop}\label{ConformalCovarianceP}
\begin{equation*}
   S(\hat{g}_+;\lambda) = \left(\frac{r}{\kappa^*(r)}\right)^{\lambda-n} \circ \kappa^* \circ
   S(g_+;\lambda) \circ \kappa_* \circ
   \left(\frac{r}{\kappa^*(r)}\right)^{n-\lambda-1}.
\end{equation*}
\end{prop}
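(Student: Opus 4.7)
The plan is to reduce the statement to an identity involving ordinary Laplacians by using the conjugation formula \eqref{NewForm} from Remark \ref{conjugation}, namely
\[
   S(g_+;\lambda) = r^{\lambda-n} \circ \bigl(\Delta_{g_+} + (\lambda+1)(n-\lambda-1)\bigr) \circ r^{n-\lambda-1},
\]
viewed as an identity of operators on $C^\infty(M_+^\circ)$. Writing both $S(g_+;\lambda)$ and $S(\hat g_+;\lambda)$ in this conjugated form, the desired transformation law gets stripped of all of its $r$-power dressing and is reduced to a single clean statement about the Laplacian of $g_+$ versus that of $\hat g_+ = \kappa^*(g_+)$.

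The key input at the Laplacian level is naturality under diffeomorphisms,
\[
   \Delta_{\kappa^*(g_+)} = \kappa^* \circ \Delta_{g_+} \circ \kappa_*,
\]
together with the fact that the scalar $c\st (\lambda+1)(n-\lambda-1)$ commutes with $\kappa^*$ and $\kappa_*$. Hence $\Delta_{\hat g_+} + c = \kappa^* \circ (\Delta_{g_+}+c) \circ \kappa_*$. The remaining bookkeeping is governed by the elementary conjugation rules $\kappa^* \circ r^a = \kappa^*(r)^a \circ \kappa^*$ and $r^a \circ \kappa_* = \kappa_* \circ \kappa^*(r)^a$, both of which follow at once from $\kappa_*(\kappa^*(r)) = r$.

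To execute the argument I would start from the right-hand side of the asserted identity, substitute the conjugation formula for $S(g_+;\lambda)$, and move $\kappa^*$ past the innermost $r^{\lambda-n}$ and $\kappa_*$ past the innermost $r^{n-\lambda-1}$. The resulting factors $\kappa^*(r)^{\lambda-n}$ and $\kappa^*(r)^{n-\lambda-1}$ then combine with the outer $(r/\kappa^*(r))^{\lambda-n}$ and $(r/\kappa^*(r))^{n-\lambda-1}$ to yield $r^{\lambda-n}$ on the left and $r^{n-\lambda-1}$ on the right, leaving $\kappa^* \circ (\Delta_{g_+}+c) \circ \kappa_*$ sandwiched in between. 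By naturality this central piece equals $\Delta_{\hat g_+} + c$, and a second application of \eqref{NewForm}, now for $\hat g_+$, recognises the whole expression as $S(\hat g_+;\lambda)$. Finally, since both sides are smooth differential operators on $M_+$ and $M_+^\circ$ is dense, the identity obtained on $C^\infty(M_+^\circ)$ extends to all of $C^\infty(M_+)$.

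There is no real obstacle in this approach; the only point requiring care is the bookkeeping of multiplication and pull-back operators in the right order, for which the two conjugation rules above are entirely sufficient. Notably, the proof uses nothing about the normal-form structure of $g_+$ or $\hat g_+$ beyond the bare relation $\hat g_+ = \kappa^*(g_+)$, nor any explicit information about $\varphi$ or the specific form of $\kappa$ beyond its being a diffeomorphism of $M_+$.
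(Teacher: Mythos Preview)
Your proposal is correct and follows essentially the same approach as the paper: both arguments use the conjugation formula \eqref{NewForm} and the naturality $\Delta_{\kappa^*(g_+)} = \kappa^* \circ \Delta_{g_+} \circ \kappa_*$, then perform the same bookkeeping with $r$-powers and pull-backs. Your version is marginally cleaner in that you keep the scalar $(\lambda+1)(n-\lambda-1)$ bundled with $\Delta_{g_+}$ throughout, whereas the paper splits it off as a separate $\frac{1}{r}$-term and verifies the transformation law for that piece independently; otherwise the two proofs are the same.
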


\begin{proof} By \eqref{NewForm}, we have
\begin{equation*}
   S(\hat{g}_+;\lambda) = r^{\lambda-n} \Delta_{\hat{g}_+} r^{n-\lambda-1}
   + (\lambda\!+\!1)(n\!-\!\lambda\!-\!1) \frac{1}{r}.
\end{equation*}
But $\kappa^*(g_+)=\hat{g}_+$ implies $\Delta_{\hat{g}_+}=\kappa^*
\Delta_{g_+}\kappa_*$. Hence
\begin{align*}
   S(\hat{g}_+;\lambda) & = r^{\lambda-n} \kappa^* r^{-\lambda+n} r^{\lambda-n}
   \Delta_{g_+} r^{n-\lambda-1} r^{\lambda-n+1} \kappa_*(r^{n-\lambda-1}\cdot)
   + (\lambda\!+\!1)(n\!-\!\lambda\!-\!1)\frac{1}{r}\\
   & =r^{\lambda-n} \kappa^* (r^{-\lambda+n}) \kappa^* r^{\lambda-n} \Delta_{g_+}
   r^{n-\lambda-1}
   \kappa_* \kappa^*(r^{\lambda-n+1}) r^{n-\lambda-1} +(\lambda\!+\!1)(n\!-\!\lambda\!-\!1)\frac{1}{r}\\
   & = \left(\frac{r}{\kappa^*(r)}\right)^{\lambda-n} \kappa^* ( r^{\lambda-n} \Delta_{g_+}
   r^{n-\lambda-1}) \kappa_* \left(\frac{r}{\kappa^*(r)}\right)^{n-\lambda-1}
   + (\lambda\!+\!1)(n\!-\!\lambda\!-\!1)\frac{1}{r}.
\end{align*}
Now the obvious identity
\begin{equation*}
   \frac{1}{r} = \left(\frac{r}{\kappa^*(r)}\right)^{\lambda-n} \circ \kappa^* \circ
   \frac{1}{r} \circ \kappa_* \circ \left(\frac{r}{\kappa^*(r)}\right)^{n-\lambda-1}
   \end{equation*}
implies
\begin{equation*}
   S(\hat{g}_+,r;\lambda) = \left(\frac{r}{\kappa^*(r)}\right)^{\lambda-n} \kappa^* S(g_+;\lambda) \kappa_*
   \left(\frac{r}{\kappa^*(r)}\right)^{n-\lambda-1}.
\end{equation*}
This completes the proof.
\end{proof}

Theorem \ref{TA} is a direct consequence of this result.

\begin{remark}\label{ConformalTrafoP} For Einstein $g_+$, Proposition
\ref{ConformalCovarianceP} also follows by combining the conformal covariance
of $\IDD$ (Proposition \ref{CTDL}) with the identification of $S(g_+;\lambda)$
as a degenerate Laplacian (Proposition \ref{EquivalentOperators}). Indeed, for
$\hat{h} = e^{2\varphi} h$, we write $g_+ = r^{-2} (dr^2 + h_r)$ and $\hat{g}_+
= r^{-2} (dr^2 + \hat{h}_r)$ with $\hat{g}_+ = \kappa^*(g_+)$ and $h_0 =h$,
$\hat{h}_0 = \hat{h}$. Then
\begin{equation}\label{trans}
   \kappa^*(dr^2 + h_r) = \left(\frac{\kappa^*(r)}{r}\right)^2(dr^2+\hat{h}_r).
\end{equation}
Now assume that $g_+$ is Einstein. Then
\begin{align*}
   S(\hat{g}_+;\lambda) & = -\ID[dr^2 \!+\! \hat{h}_r,r;\lambda\!-\!n\!+\!1] & (\mbox{by \eqref{SandID}}) \\
   & = -\ID\left[\left(\frac{\kappa^*(r)}{r}\right)^{-2} \kappa^*(dr^2 \!+\! h_r),
   \left(\frac{\kappa^*(r)}{r}\right)^{-1} \kappa^*(r);\lambda\!-\!n\!+\!1\right] & (\mbox{by \eqref{trans}}) \\
   & = -\left(\frac{r}{\kappa^*(r)}\right)^{\lambda-n} \ID\left[\kappa^*(dr^2 \!+\! h_r),
   \kappa^*(r);\lambda\!-\!n\!+\!1\right]
   \left(\frac{r}{\kappa^*(r)}\right)^{\lambda-n+1} & (\mbox{by \eqref{CT-ID}}) \\
   & = -\left(\frac{r}{\kappa^*(r)}\right)^{\lambda-n} \kappa^* \ID\left[dr^2 \!+\! h_r,r;
   \lambda\!-\!n\!+\!1\right] \kappa_*
   \left(\frac{r}{\kappa^*(r)}\right)^{\lambda-n+1} & (\mbox{by \eqref{natural}}) \\
   & = \left(\frac{r}{\kappa^*(r)}\right)^{\lambda-n} \kappa^* S(g_+;\lambda) \kappa_*
   \left(\frac{r}{\kappa^*(r)}\right)^{\lambda-n+1}. & (\mbox{by \eqref{SandID}})
\end{align*}
A general Poincar\'e metric $g_+$ is only approximate Einstein. For such a
metric, the identification of $S(g_+;\lambda)$ with a degenerate Laplacian
holds true only up to an error term. This leads to error terms in the above
calculation and in the resulting conformal transformation law.
\end{remark}

In Section \ref{OnJuhlsFormulae}, we shall prove a relation between
compositions of shift operators and residue families. For that purpose, we need
a relation among the operator $S(g_+;\lambda)$ and its formal adjoint
$S^*(g_+;\lambda)$ with respect to the scalar product defined by $\bar{g}$.

\begin{prop}\label{AdjointOfP} The formal adjoint of the operator $S(g_+;\lambda)$ acting
on $C^\infty_c(M_+^\circ)$ with respect to the scalar product defined by
$\bar{g}$ is given by
\begin{equation*}
    S^*(g_+;\lambda) = S(g_+;n\!-\!\lambda\!-\!2).
\end{equation*}
\end{prop}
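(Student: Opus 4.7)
The plan is to exploit the conjugation formula from Remark \ref{conjugation},
\[
   S(g_+;\lambda) = r^{\lambda-n} \circ (\Delta_{g_+} + c(\lambda)) \circ r^{n-\lambda-1},
   \quad c(\lambda) \st (\lambda+1)(n-\lambda-1),
\]
together with the crucial symmetry $c(n-\lambda-2) = (n-\lambda-1)(\lambda+1) = c(\lambda)$. Once the adjoint with respect to $\bar{g}$ is computed, recognizing the resulting operator as $S(g_+; n-\lambda-2)$ will amount to a second application of the conjugation formula and a use of this symmetry.

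The key step is to convert the volume forms: since $\bar{g} = r^2 g_+$ in dimension $n+1$, we have $dvol(\bar{g}) = r^{n+1} dvol(g_+)$. For $f_1, f_2 \in C_c^\infty(M_+^\circ)$, I would write
\[
   \int f_1 \, S(g_+;\lambda) f_2 \, dvol(\bar{g})
   = \int r^{n+1} f_1 \cdot r^{\lambda-n} (\Delta_{g_+} + c(\lambda))(r^{n-\lambda-1} f_2) \, dvol(g_+),
\]
absorb the outer $r$-powers into $f_1$, and then use the self-adjointness of $\Delta_{g_+}$ (and hence of $\Delta_{g_+} + c(\lambda)$, since $c(\lambda)$ is a scalar) with respect to $dvol(g_+)$ to move the operator onto the first argument. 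Converting back to $dvol(\bar{g})$ produces
\[
   S^*(g_+;\lambda) = r^{-\lambda-2} \circ (\Delta_{g_+} + c(\lambda)) \circ r^{\lambda+1}.
\]
Comparing with the conjugation formula evaluated at $n-\lambda-2$, whose outer exponents are $(n-\lambda-2)-n = -\lambda-2$ and $n-(n-\lambda-2)-1 = \lambda+1$, and using $c(n-\lambda-2) = c(\lambda)$, gives $S^*(g_+;\lambda) = S(g_+; n-\lambda-2)$.

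The only mildly delicate point is the bookkeeping of the opposing powers of $r$ arising from the change of measure ($r^{n+1}$) and from the conjugation ($r^{n-\lambda-1}$ and its inverse); these combine to produce precisely the exponents $\lambda+1$ and $-\lambda-2$ that characterize $S(g_+; n-\lambda-2)$ via the conjugation formula. As a sanity check, one may verify the identity directly on the form \eqref{eq:BSOperators} by using $dvol(\bar{g}) = v(r)\, dr\, dvol(h)$, which gives $\partial_r^* = -\partial_r - \dot{v}/v$; self-adjointness of $\Delta_{\bar{g}}$; and the commutator $[\Delta_{\bar{g}}, r] = 2\partial_r + \dot{v}/v$ coming from $\Delta_{\bar{g}}(rf) = r\Delta_{\bar{g}}(f) + 2\partial_r f + f\,\Delta_{\bar{g}}(r)$ with $\Delta_{\bar{g}}(r) = \dot{v}/v$. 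Collecting terms, the coefficient of $\partial_r$ becomes $2 + (2\lambda-n+1) = 2\lambda-n+3$ and the coefficient of $\dot{v}/v$ becomes $\lambda+1$, matching $S(g_+; n-\lambda-2)$.
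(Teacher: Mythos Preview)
Your proof is correct and follows essentially the same approach as the paper: both use the conjugation formula \eqref{NewForm}, convert between $dvol(\bar{g})$ and $dvol(g_+)$ via the factor $r^{n+1}$, invoke the self-adjointness of $\Delta_{g_+}$, and then recognize the resulting expression $r^{-\lambda-2}\circ(\Delta_{g_+}+(\lambda+1)(n-\lambda-1))\circ r^{\lambda+1}$ as $S(g_+;n-\lambda-2)$. Your additional sanity check via the explicit form \eqref{eq:BSOperators} is a nice bonus not present in the paper.
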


\begin{proof} Let $f_1,f_2 \in C_c^\infty(M_+^\circ)$. Then, using $dvol(\bar{g})=r^{n+1}dvol(g_+)$
and \eqref{NewForm}, we have
\begin{align*}
   \int_{M_+} (S(g_+;\lambda)f_1) f_2 d vol(\bar{g})
   &=\int_{M_+} r^{\lambda-n}[\Delta_{g_+}\!+\!(\lambda\!+\!1)(n\!-\!\lambda\!-\!1)](r^{n-\lambda-1}f_1) f_2
   r^{n+1} dvol(g_+)\\
   &=\int_{M_+} [\Delta_{g_+}\!+\!(\lambda\!+\!1)(n\!-\!\lambda\!-\!1)](r^{n-\lambda-1}f_1) r^{\lambda+1} f_2
   dvol(g_+)\\
   &=\int_{M_+} (r^{n-\lambda-1}f_1) [\Delta_{g_+}\!+\!(\lambda\!+\!1)(n\!-\!\lambda-1)](r^{\lambda+1} f_2)
   dvol(g_+)\\
   &=\int_{M_+} f_1 (r^{-\lambda-2}[\Delta_{g_+}\!+\!(\lambda\!+\!1)(n\!-\!\lambda\!-\!1)](r^{\lambda+1} f_2))
   dvol(\bar{g}).
\end{align*}
Thus
\begin{equation*}
   S^*(g_+;,\lambda) = r^{-\lambda-2} \circ (\Delta_{g_+}+(\lambda+1)(n-\lambda-1)) \circ r^{\lambda+1}.
\end{equation*}
Comparing this relation with \eqref{NewForm} completes the proof.
\end{proof}

In the proof of Theorem \ref{DeltaN}, we shall actually need the following
improved version of the latter result.

\begin{remark}\label{supplement} The relation
$$
   \int_{M_+} S(g_+;\lambda)(f_1) f_2 dvol(\bar{g}) = \int_{M_+} f_1 S(g_+;n\!-\!\lambda\!-\!2)(f_2) dvol(\bar{g})
$$
continues to be true for $f_2 \in C_c^\infty(M_+)$ and $f_1 \in
C^\infty(M_+^\circ)$ with an asymptotic expansion of the form
$$
   f_1(r,x) \sim \sum_{j \ge 0} r^{\nu+j} a_j(x), \; r \to 0, \; a_j \in C^\infty(M)
$$
with $\Re(\nu)>0$.
\end{remark}

\begin{proof} It suffices to prove the relations
$$
    \int_{M_+} \partial_r(f_1) f_2 dvol(\bar{g}) = - \int_{M_+} f_1 \partial_r(f_2) dvol(\bar{g})
    - \frac{1}{2} \int_{M_+} f_1 \tr(h_r^{-1}\dot{h}_r) f_2 dvol(\bar{g})
$$
and
$$
    \int_{M_+} (r \Delta_{\bar{g}})(f_1) f_2  dvol(\bar{g}) = \int_{M_+} f_1
    \left(r \Delta_{\bar{g}} + 2 \partial_r + \frac{1}{2} \tr (h_r^{-1} \dot{h}_r)\right)(f_2) dvol(\bar{g}).
$$
For the proof of the first relation we use \eqref{v-trace} and the observation
that the restriction of $f_1$ to the boundary vanishes. Green's formula and
\eqref{eq:CCLaplacian} imply the second relation. Here the boundary
contributions vanish since the restrictions of $f_1$ and $r
\partial_r(f_1)$ to the boundary both vanish.
\end{proof}

For later purpose, we need to understand the behavior of the composition of
$S(g_+;\lambda)$ with the multiplication by powers of $r$.

\begin{lem}\label{GeneralSL(2)} Let $f\in C^\infty(M_+^\circ)$ and $a\in\N$. Then
\begin{equation*}
   S(g_+;\lambda)(r^a f) = r^a S(g_+;\lambda\!-\!a)f - a(2\lambda\!-\!n\!+\!2\!-\!a) r^{a-1} f.
\end{equation*}
\end{lem}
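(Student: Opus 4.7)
The plan is to reduce the identity to a purely algebraic statement by exploiting the conjugation formula \eqref{NewForm}, namely
\[
   S(g_+;\mu) \;=\; r^{\mu-n} \circ (\Delta_{g_+} + (\mu+1)(n-\mu-1)) \circ r^{n-\mu-1},
\]
which holds for every $\mu\in\C$. The key observation is that the exponent on the right, $n-\mu-1$, becomes $n-(\lambda-a)-1 = n-\lambda-1+a$ when $\mu = \lambda-a$, so composition with multiplication by $r^a$ ``absorbs'' the $a$-shift in the conjugation factor.

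Concretely, I would apply \eqref{NewForm} with $\mu=\lambda$ to $r^a f$ and, independently, with $\mu=\lambda-a$ to $f$, then multiply the second by $r^a$ on the left:
\begin{align*}
   S(g_+;\lambda)(r^a f) &= r^{\lambda-n}\bigl(\Delta_{g_+} + (\lambda+1)(n-\lambda-1)\bigr)\bigl(r^{n-\lambda-1+a} f\bigr),\\
   r^a S(g_+;\lambda-a) f &= r^{\lambda-n}\bigl(\Delta_{g_+} + (\lambda-a+1)(n-\lambda+a-1)\bigr)\bigl(r^{n-\lambda-1+a} f\bigr).
\end{align*}
Since the same $\Delta_{g_+}$-piece acts on the same function $r^{n-\lambda-1+a}f$ in both lines, the difference reduces to the scalar constant
\[
   (\lambda+1)(n-\lambda-1) - (\lambda-a+1)(n-\lambda+a-1)
\]
times $r^{\lambda-n}\cdot r^{n-\lambda-1+a} f = r^{a-1}f$.

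Setting $p=\lambda+1$ and $q=n-\lambda-1$, the scalar equals $pq-(p-a)(q+a) = a(q-p+a) = a\bigl(n-2\lambda-2+a\bigr) = -a(2\lambda-n+2-a)$, and so
\[
   S(g_+;\lambda)(r^a f) - r^a S(g_+;\lambda-a)f \;=\; -a(2\lambda-n+2-a)\, r^{a-1}f,
\]
which is the claimed identity.

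There is no real obstacle here: the lemma is essentially a commutator computation, and the only thing to be careful about is bookkeeping of the $r$-exponents so that the same function $r^{n-\lambda-1+a}f$ appears inside both instances of the Laplacian. One could alternatively grind this out from the explicit formula \eqref{eq:BSOperators}, verifying that the contributions of $r\Delta_{\bar g}$, $(2\lambda-n+1)\partial_r$ and $\frac{1}{2}(\lambda-n+1)\tr(h_r^{-1}\dot h_r)$ assemble correctly; the coefficients of $r^a\partial_r f$ and of $\tr(h_r^{-1}\dot h_r)r^a f$ cancel between $S(g_+;\lambda)(r^a f)$ and $r^a S(g_+;\lambda-a)f$, leaving only the coefficient of $r^{a-1}f$. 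But the conjugation argument above makes both cancellations automatic and yields the constant $-a(2\lambda-n+2-a)$ without touching the $r$-dependent metric coefficients.
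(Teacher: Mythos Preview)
Your proof is correct and takes a genuinely different route from the paper. The paper argues directly from the explicit formula \eqref{eq:BSOperators}: it expands $r\Delta_{\bar g}(r^a f)$ via \eqref{eq:CCLaplacian}, expands $-(2\lambda-n+1)\partial_r(r^a f)$, and checks that the $r^a\partial_r f$ and $\tr(h_r^{-1}\dot h_r)r^a f$ terms reassemble into $r^a S(g_+;\lambda-a)f$, leaving the residual $a[(a-1)-(2\lambda-n+1)]r^{a-1}f$. In other words, the paper does precisely the ``grind it out'' alternative you mention at the end. Your conjugation argument via \eqref{NewForm} is cleaner: it makes the cancellation of the $\Delta_{g_+}$-piece automatic and reduces the whole lemma to the quadratic identity $pq-(p-a)(q+a)=a(q-p+a)$, with no need to open up $\Delta_{\bar g}$ or track the $\tr(h_r^{-1}\dot h_r)$ term. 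The paper's approach has the minor virtue of staying entirely on $M_+$ (no negative powers of $r$ appear in the computation), but since \eqref{NewForm} is already established as an operator identity on $C^\infty(M_+^\circ)$ and the statement is for $f\in C^\infty(M_+^\circ)$, your argument is fully rigorous.
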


\begin{proof} The proof is straightforward. The definition \eqref{eq:BSOperators}
and the formula \eqref{eq:CCLaplacian} for $\Delta_{\bar{g}}$ imply
\begin{align*}
   S(g_+;\lambda)(r^a f) & = r\Delta_{\bar{g}}(r^{a}f) - (2\lambda\!-\!n\!+\!1)\partial_r (r^a f)
   - \frac{1}{2}(\lambda\!-\!n\!+\!1) \tr(h_r^{-1}\dot{h}_r) r^a f\\
   & = r^a S(g_+;\lambda\!-\!a)f + [a(a\!-\!1)-(2\lambda\!-\!n\!+\!1)] r^{a-1}f\\
   & = r^a S(g_+;\lambda\!-\!a)f - a(2\lambda\!-n\!+\!2\!-\!a) r^{a-1}f.
\end{align*}
The proof is complete.
\end{proof}

Of course, this result is also true for $f \in C^\infty(M_+)$. In view of the
second part of Proposition \ref{EquivalentOperators}, it should be viewed as an
analog of the $sl(2)$-structure for the degenerate Laplacian $\IDD$ proved in
\cite[Lemma 3.1, Proposition 3.4]{GW}.

\section{A new formula for residue families}\label{OnJuhlsFormulae}

We recall that residue families $D_N^{res}(h;\lambda)$ are defined by
normalizations of the families $\delta_N(h;\lambda)$ (see
\eqref{eq:DefDeltaN}). In the present section, we express these families in
terms of shift operators $S(g_+;\lambda)$ (see \eqref{eq:BSOperators}) and
discuss some consequences.

For $N\in\N$, we define the family $S_N(g_+;\lambda)$ of shift operators on
$M_+^\circ$ by
\begin{equation}\label{eq:ThePpolynomial}
    S_N(g_+;\lambda) \st \underbrace{S(g_+;\lambda) \circ \cdots \circ S(g_+;\lambda+N-1)}_{N factors}.
\end{equation}
We also set $S_0(g_+;\lambda)=\id$.

Similarly, we define
\begin{equation}\label{eq:TheIDpolynomial}
   \ID_N[\bar{g},r;\lambda] \st
   \underbrace{\ID[\bar{g},r;\lambda] \circ \cdots \circ \ID[\bar{g},r;\lambda+N-1]}_{N factors}.
\end{equation}

Again, we shall regard $S_N(g_+;\lambda)$ as an operator $C^\infty(M_+^\circ) \to
C^\infty(M_+^\circ)$ and also as an operator on $C^\infty(M_+)$. By definition,
$S_N(g_+;\lambda)$ is a differential operator of order $2N$ with polynomial
coefficients in $\lambda$. As a polynomial in $\lambda$ it is of degree $N$.

We also recall that, for $N \in \N$ with $N \le n+1$ for even $n$, the families
$\delta_N(h;\lambda)$ are determined by $h$.

\begin{theorem}\label{DeltaN} Let $N\in\N$ with $N \le n+1$ for even $n$. Then
\begin{equation}\label{eq:DeltaN}
    \delta_N(h;\lambda) = \frac{1}{(-N)_N(2\lambda\!-\!n\!+\!1)_N} \iota^* S_N(g_+;\lambda)
\end{equation}
as an identity of meromorphic functions in $\lambda$ with values in operators
$C^\infty(M_+) \to C^\infty(M)$.
\end{theorem}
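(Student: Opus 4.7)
The plan is to prove \eqref{eq:DeltaN} by iterating the shift property of Theorem \ref{CurvedBS} $N$ times and then transferring the iterated shift operator to the test function via its formal adjoint, so that everything reduces to the trivial base case $N=0$. To set up, I would pick an eigenfunction $u$ of $-\Delta_{g_+}$ with eigenvalue $\lambda(n-\lambda)$ and boundary value $f$, as in Section \ref{PEandGJMS}, and note that $r^\mu u = M_u(r;\mu+n-1)$, so the defining identity \eqref{eq:DefDeltaN} becomes
\begin{equation*}
   \int_M f\,\delta_N(h;\lambda)\,\varphi\, dvol(h) = \Res_{\mu = n-\lambda-2-N} \int_{M_+} M_u(r;\mu)\,\varphi\, dvol(\bar{g})
\end{equation*}
for $\varphi \in C_c^\infty(M_+)$. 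The task is then to evaluate this residue in closed form.

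By Theorem \ref{CurvedBS}, $S(g_+;\mu) M_u(r;\mu) = C(\mu) M_u(r;\mu-1)$ with $C(\mu) = (\mu+\lambda-n+1)(\lambda-\mu-1)$. Composing $N$ factors in the appropriate order gives
\begin{equation*}
   S_N(g_+;\mu+1)\, M_u(r;\mu+N) = \prod_{k=1}^N C(\mu+k)\cdot M_u(r;\mu),
\end{equation*}
first on the level of functions for $\Re \mu$ large, then meromorphically after pairing with $\varphi$. Two elementary calculations then do most of the work. A direct Pochhammer bookkeeping shows that $\prod_{k=1}^N C(\mu+k)$ evaluated at $\mu = n-\lambda-2-N$ equals $(-N)_N(2\lambda-n+1)_N$, which is nonzero for generic $\lambda$, so this factor does not cancel the simple pole. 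Reversing the composition order and applying Proposition \ref{AdjointOfP} to each factor of $S_N$ yields $S_N(g_+;\mu_0)^* = S_N(g_+;n-\mu_0-N-1)$, which at $\mu_0 = \mu+1 = n-\lambda-1-N$ simplifies to $S_N(g_+;\lambda)$. Remark \ref{supplement} allows one to move $S_N$ over to $\varphi$ in the integral pairing, because $r^{\mu+N-n+1}u$ has leading asymptotic exponent with positive real part for $\Re \mu$ large.

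Putting these pieces together and taking the residue at $\mu = n-\lambda-2-N$ reduces the left-hand side to $\tfrac{1}{(-N)_N(2\lambda-n+1)_N}$ times the residue at $\mu' = n-\lambda-2$ of $\int_{M_+} M_u(r;\mu')\, S_N(g_+;\lambda)\varphi\, dvol(\bar{g})$, which is precisely the $N=0$ instance of \eqref{eq:DefDeltaN} applied to the test function $S_N(g_+;\lambda)\varphi$. The base case $\delta_0(h;\lambda) = \iota^*$ drops out of a direct inspection of the leading $r^\lambda$ term in the asymptotic expansion of $u$, using $v(0,\cdot) = 1$. Combining, one obtains
\begin{equation*}
   \int_M f\,\delta_N(h;\lambda)\varphi\, dvol(h) = \frac{1}{(-N)_N(2\lambda-n+1)_N}\int_M f\, \iota^*(S_N(g_+;\lambda)\varphi)\, dvol(h),
\end{equation*}
and \eqref{eq:DeltaN} follows on varying $f$ and $\varphi$.

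The hard part will be the rigorous passage from the shift identity on functions to its meromorphic extension to distributions, and the justification of the adjoint integration by parts when applied to the residue distribution inside the pairing. Remark \ref{supplement} is designed precisely for this situation, so once the asymptotic hypothesis on $r^{\mu+N-n+1}u$ is verified, the remaining ingredients are the two elementary computations above — the Pochhammer evaluation producing the normalization $(-N)_N(2\lambda-n+1)_N$, and the adjoint computation turning $S_N^*(g_+;n-\lambda-1-N)$ into $S_N(g_+;\lambda)$.
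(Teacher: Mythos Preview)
Your proposal is correct and follows essentially the same route as the paper: iterate Theorem \ref{CurvedBS} $N$ times, move the resulting $S_N$ across the pairing via Proposition \ref{AdjointOfP}/Remark \ref{supplement}, and reduce to the base case $\delta_0=\iota^*$; your Pochhammer and adjoint computations agree with the paper's, only the variable names differ (the paper separates the spectral parameter $\nu$ from the integration variable and then continues meromorphically, whereas you identify them from the start).
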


\begin{proof} Let $u$ be an eigenfunction with boundary value $f$ and
satisfying \eqref{eigen} with $\Re(\nu) = n/2$, $\nu \ne n/2$. We derive the
assertion from Theorem \ref{CurvedBS} and the identity
\begin{equation}\label{Delta0}
    \Res_{\lambda=-\nu-1} \left(\int_{M_+} M_u(r;\lambda\!+\!n\!-\!1)\varphi dvol(\bar{g})\right)
    = \int_M f \iota^*\varphi dvol(h)
\end{equation}
(see \eqref{eq:DefDeltaN} for $N=0$). In the following, it will be convenient
to use, for any $\lambda$-dependent operator $A(\lambda)$, the notation
\begin{align*}
    A((\lambda)_N) & \st A(\lambda)\circ A(\lambda+1)\circ\cdots\circ A(\lambda+N-1),\\
    A((\lambda)^N) & \st A(\lambda)\circ A(\lambda-1)\circ\cdots\circ A(\lambda-N+1).
\end{align*}
On the one hand, \eqref{eq:DefDeltaN} implies
\begin{equation}\label{eq:h1}
    \Res_{\lambda=-\nu-1-N} \left(\int_{M_+} M_u(r;\lambda\!+\!n\!-\!1)\varphi dvol(\bar{g})\right)
    = \int_M f \delta_N(h;\nu)\varphi dvol(h).
\end{equation}
On the other hand, using Theorem \ref{CurvedBS}, we obtain for $\Re(\lambda)
\notin -\frac{n}{2}-\N$
\begin{align*}
    M_u(r;\lambda\!+\!n\!-\!1)
    & = \frac{S(g_+;\lambda\!+\!n)(M_u(r;\lambda\!+\!n))}{(\lambda+\nu+1)(\nu-\lambda-n-1)} \\
    & = \frac{S(g_+;\lambda\!+\!n) S(g_+;\lambda\!+\!n\!+\!1)(M_u(r;\lambda\!+\!n\!+\!1))}
    {(\lambda+\nu+1)(\lambda+\nu+2)(\nu-\lambda-n-2)(\nu-\lambda-n-1)} \\
    & = \dots = \frac{S(g_+;(\lambda\!+\!n)_N)(M_u(r;\lambda\!+\!n\!+\!N\!-\!1))}{(\lambda+\nu+1)_N
    (\nu-\lambda-n-N)_N}.
\end{align*}
Now we take adjoints using Remark \ref{supplement}. This gives
\begin{multline*}
   \int_{M_+} M_u(r;\lambda\!+\!n\!-\!1)\varphi d vol(\bar{g})
   = \frac{1}{(\lambda+\nu+1)_N(\nu-\lambda-n-N)_N} \\
   \times \int_{M_+} M_u(r;\lambda\!+\!n\!+\!N\!-\!1)
   S^*(g_+;(\lambda\!+\!n\!+\!N\!-\!1)^N) \varphi dvol(\bar{g})
\end{multline*}
for $\Re(\lambda) > - \frac{n}{2}-1$. By the assumptions, the zeros of $\nu
\mapsto (\lambda+\nu+1)_N(\nu-\lambda-n-N)_N$ are simple for $\Re(\lambda)
> - \frac{n}{2}-1$. Hence \eqref{Delta0} implies
\begin{align*}
   & \Res_{\lambda=-\nu-1-N} \left(\int_{M_+} M_u(r;\lambda\!+\!n\!-\!1)\varphi
   dvol(\bar{g})\right) \\
   & = \frac{1}{(-N)_N(2\nu-n+1)_N} \int_M f \iota^*S^*(g_+;(-\nu\!+\!n\!-\!2)^N)\varphi dvol(h)\\
   & = \frac{1}{(-N)_N(2\nu-n+1)_N} \int_M f \iota^*S_N(g_+;\nu)\varphi dvol(h).
\end{align*}
Comparing this result with \eqref{eq:h1}, completes the proof for $\Re(\nu) =
n/2$, $\nu \ne n/2$. The assertion then follows by meromorphic continuation.
\end{proof}

Since the operator $S_N(g_+;\lambda)$ involves $2N$ derivatives in $r$, it is a
non-trivial observation that its composition with $\iota^*$ only depends on $h$
for appropriate choices of $N$. But this is an immediate consequence of Theorem
\ref{DeltaN} as long as $N \le n$ for even $n$. Therefore, it is justified to
introduce the notation
\begin{equation}\label{reduced-shift}
   \S_N(h;\lambda) \st \iota^* S_N(g_+;\lambda)
\end{equation}
for such $N$. However, for even $n$ and general $N \in \N$, the composition
$\iota^* S_N(g_+;\lambda)$ does not only depend on $h$ and the notation
$\S_N(h;\lambda)$ will not be used.

As a direct consequence of Theorem \ref{DeltaN}, we obtain the following
identification of residue families with operators $\S_N(h;\lambda)$.

\begin{cor}\label{RFvsSF} Assume that $N \in \N$ so that $2N \le n$ for even $n$. Then
\begin{equation}\label{ResFam-Shift-even}
   D_{2N}^{res}(h;\lambda) = \frac{1}{(-2N)_{N}(\lambda\!+\!\frac n2\!-\!2N\!+\!\frac{1}{2})_{N}}
   \S_{2N}(h;\lambda\!+\!n\!-\!2N)
\end{equation}
and
\begin{equation}\label{ResFam-Shift-odd}
   D_{2N+1}^{res}(h;\lambda) = \frac{1}{2(-2N\!-\!1)_{N+1}(\lambda\!+\!\frac n2\!-\!2N\!-\!\frac{1}{2})_{N+1}}
   \S_{2N+1}(h;\lambda\!+\!n\!-\!2N\!-\!1).
\end{equation}
\end{cor}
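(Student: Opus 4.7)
The plan is to derive both formulas directly from Theorem \ref{DeltaN} by unwinding the definitions \eqref{res-fam-even} and \eqref{res-fam-odd} and reorganizing the resulting Pochhammer symbols. Since Theorem \ref{DeltaN} identifies $\delta_N(h;\lambda)$ with a normalized version of $\iota^* S_N(g_+;\lambda) = \S_N(h;\lambda)$ whenever $N \le n+1$ for even $n$, the corollary is in essence a bookkeeping computation, and the only point to verify is that the combinatorial prefactors on both sides match.

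First, for the even-order case, I would substitute $\nu \mapsto \lambda + n - 2N$ in Theorem \ref{DeltaN}, obtaining
\begin{equation*}
   \delta_{2N}(h;\lambda\!+\!n\!-\!2N) = \frac{1}{(-2N)_{2N}\,(2\lambda\!+\!n\!-\!4N\!+\!1)_{2N}}\,\S_{2N}(h;\lambda\!+\!n\!-\!2N),
\end{equation*}
and plug this into \eqref{res-fam-even}. The main manipulation is the duplication identity for the Pochhammer symbol of length $2N$: splitting the product over even and odd indices gives
\begin{equation*}
   (2\lambda\!+\!n\!-\!4N\!+\!1)_{2N} = 4^{N}\,\bigl(\lambda\!+\!\tfrac{n}{2}\!-\!2N\!+\!\tfrac{1}{2}\bigr)_{N}\bigl(\lambda\!+\!\tfrac{n}{2}\!-\!2N\!+\!1\bigr)_{N}.
\end{equation*}
A short reindexing shows $(-\tfrac{n}{2}\!-\!\lambda\!+\!N)_N = (-1)^N (\lambda\!+\!\tfrac{n}{2}\!-\!2N\!+\!1)_N$, so the second half-integer factor cancels with the prefactor $(-\tfrac{n}{2}-\lambda+N)_N$ of \eqref{res-fam-even} (up to a sign). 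What remains are numerical factors that collapse to $1/(-2N)_N$ via the identity $(-2N)_N = (-1)^N (2N)!/N!$ and $(-2N)_{2N} = (2N)!$. This produces precisely \eqref{ResFam-Shift-even}.

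For the odd-order case, the strategy is identical but the Pochhammer symbol of length $2N+1$ splits asymmetrically: the $N+1$ terms at even positions contribute $2^{N+1}(\lambda+\tfrac{n}{2}-2N-\tfrac{1}{2})_{N+1}$, while the $N$ terms at odd positions contribute $2^{N}(\lambda+\tfrac{n}{2}-2N)_{N}$. The latter factor then matches (up to sign) the prefactor $(-\tfrac{n}{2}-\lambda+N+1)_N$ from \eqref{res-fam-odd}, via $(-\tfrac{n}{2}-\lambda+N+1)_N = (-1)^N (\lambda+\tfrac{n}{2}-2N)_N$, and the remaining numerical factors regroup using $(-2N-1)_{2N+1} = -(2N+1)!$ and $(-2N-1)_{N+1} = (-1)^{N+1}(2N+1)!/N!$ to yield \eqref{ResFam-Shift-odd}.

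The only step that requires minor care is checking that the range assumption on $N$ in the corollary is compatible with that of Theorem \ref{DeltaN}: for even $n$ the hypothesis $2N \le n$ (respectively $2N+1 \le n+1$, i.e., $N \le n/2$) guarantees $N \le n+1$ in the even case and analogously in the odd case, so the resulting composition $\iota^* S_{2N}(g_+;\lambda\!+\!n\!-\!2N)$ genuinely depends only on $h$ and the notation $\S_{2N}(h;\lambda\!+\!n\!-\!2N)$ is legitimate. There is no analytical obstacle here; the entire content of the corollary reduces to the duplication of Pochhammer symbols together with the identity already supplied by Theorem \ref{DeltaN}.
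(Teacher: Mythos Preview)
Your proposal is correct and follows exactly the route the paper intends: the paper states this corollary as ``a direct consequence of Theorem \ref{DeltaN}'' without spelling out the Pochhammer bookkeeping, and your argument supplies precisely those details by substituting into \eqref{res-fam-even}--\eqref{res-fam-odd} and using the duplication identity $(a)_{2N}=4^N(a/2)_N((a+1)/2)_N$. There is nothing to add.
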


Some further comments on this result are in order.

We recall that all residue families $D_N^{res}(h;\lambda)$ are polynomials in
$\lambda$. Hence Corollary \ref{RFvsSF} shows that the zeros of the
denominators on the right-hand sides of \eqref{ResFam-Shift-even} and
\eqref{ResFam-Shift-odd} actually are zeros of the respective numerators. These
formulas naturally reflect the degrees of the families on both sides. In fact,
the degrees of the polynomials $D_{2N}^{res}(h;\lambda)$ and
$\S_{2N}(h;\lambda)$ are $N$ and $2N$, respectively. Similarly, the degrees of
$D_{2N+1}^{res}(h;\lambda)$ and $\S_{2N+1}(h;\lambda)$ are $N$ and $2N+1$,
respectively.

Corollary \ref{RFvsSF} also shows that the composition of the order $2N$ family
$S_N(g_+;\lambda)$ with $\iota^*$ degenerates to a family of order $N$. Some of
the properties of the families $S_N(g_+;\lambda)$ away from $r=0$ will be
discussed in Sections \ref{applications}--\ref{panorama}.

A direct consequence of Corollary \ref{RFvsSF} and Lemma \ref{CommShiftM} are
factorization identities for residue families $D_N^{res}(h;\lambda)$ into
compositions with the factors
$$
    S(g_+;\lambda+n-1) \quad \mbox{and} \quad M_r.
$$

\begin{cor}\label{Factor-residue} Let $N\in\N$ so that $2N \le n$ for even $n$. Then
\begin{align*}
   D_{2N}^{res}(h;\lambda) & = D_{2N-1}^{res}(h;\lambda-1) S(g_+;\lambda+n-1),\\
   -(2N+1)(2\lambda+n-2N-1) D_{2N+1}^{res}(h;\lambda) & = D_{2N}^{res}(h;\lambda-1)
   S(g_+;\lambda+n-1)
\end{align*}
and
\begin{align*}
   D_{2N}^{res}(h;\lambda) & = D_{2N+1}^{res}(h;\lambda+1) M_r,\\
   -2N(2\lambda+n-2N+2) D_{2N-1}^{res}(h;\lambda) & = D_{2N}^{res}(h;\lambda+1) M_r.
\end{align*}
\end{cor}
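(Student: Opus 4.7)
The plan is to derive all four factorization identities by rewriting both sides in terms of the normalized iterated shift operators $\S_N(h;\cdot) = \iota^* S_N(g_+;\cdot)$ via Corollary \ref{RFvsSF}, and then reducing each claim either to the telescoping identity $S_{N-1}(g_+;\mu)\circ S(g_+;\mu+N-1) = S_N(g_+;\mu)$ (for the two identities involving $S(g_+;\lambda+n-1)$) or to a commutation relation between $M_r$ and $S_N(g_+;\cdot)$ (for the two identities involving $M_r$), followed by bookkeeping of Pochhammer symbols.

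For the first pair of identities, I would write
$$D_{2N-1}^{res}(h;\lambda\!-\!1) = \frac{1}{2(-2N+1)_N(\lambda+\tfrac n2-2N+\tfrac{1}{2})_N}\iota^* S_{2N-1}(g_+;\lambda\!+\!n\!-\!2N)$$
using \eqref{ResFam-Shift-odd}, and observe that post-composing with $S(g_+;\lambda+n-1)$ fills in the missing $2N$-th factor of the iterated shift operator at parameter $\lambda+n-2N$, i.e. yields $\iota^* S_{2N}(g_+;\lambda+n-2N)$. Comparison with the expression for $D_{2N}^{res}(h;\lambda)$ from \eqref{ResFam-Shift-even} then reduces everything to the Pochhammer identity $(-2N)_N = 2(-2N+1)_N$, which is immediate from $(-2N)_N = (-2N)\cdot(-2N+1)_{N-1}\cdot$ versus $(-2N+1)_N = (-N)\cdot(-2N+1)_{N-1}\cdot$. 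The second (odd-to-odd) identity proceeds identically, with the factor $-(2N+1)(2\lambda+n-2N-1)$ arising from writing $(\lambda+\tfrac n2-2N-\tfrac{1}{2})_{N+1} = (\lambda+\tfrac n2-2N-\tfrac{1}{2})_N(\lambda+\tfrac n2-N-\tfrac{1}{2})$ and $(-2N-1)_{N+1} = (-2N-1)(-2N)_N$.

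For the two $M_r$ identities, the key auxiliary step (this is what Lemma \ref{CommShiftM} presumably states) is the commutator formula
\begin{equation*}
   S_N(g_+;\lambda)\,M_r = M_r\,S_N(g_+;\lambda\!-\!1) - N(2\lambda\!+\!N\!-\!n)\,S_{N-1}(g_+;\lambda).
\end{equation*}
I would prove this by iterating Lemma \ref{GeneralSL(2)} with $a=1$, or more cleanly by invoking the conjugation formula \eqref{NewForm}, which gives
$$S_N(g_+;\lambda) = r^{\lambda-n}\,\prod_{k=1}^{N}\bigl(\Delta_{g_+}+(\lambda+k)(n-\lambda-k)\bigr)\,r^{n-\lambda-N};$$
sandwiching $M_r$ on the right changes the final power to $r^{n-\lambda-N+1}$, and the difference $S_N(g_+;\lambda)M_r - M_r S_N(g_+;\lambda-1)$ telescopes into the scalar $(\lambda+N)(n-\lambda-N)-\lambda(n-\lambda) = -N(2\lambda+N-n)$ times the middle product, which is exactly $S_{N-1}(g_+;\lambda)$. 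Composing with $\iota^*$ kills the $M_r S_N(g_+;\lambda-1)$ term since $\iota^*\circ M_r = 0$, yielding
\begin{equation*}
   \iota^* S_N(g_+;\lambda)\,M_r = -N(2\lambda\!+\!N\!-\!n)\,\iota^* S_{N-1}(g_+;\lambda).
\end{equation*}

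Inserting this reduction into the Corollary \ref{RFvsSF} expressions for $D_{2N+1}^{res}(h;\lambda+1)M_r$ and $D_{2N}^{res}(h;\lambda+1)M_r$, respectively, reduces the third and fourth identities to the same elementary Pochhammer manipulations used in the first pair, together with the rewriting $2\mu+2N+1-n = 2(\mu+\tfrac{2N+1-n}{2})$ (resp. $2\mu+2N-n = 2(\mu+N-\tfrac{n}{2})$) that lets the linear factor absorb into the shifted Pochhammer symbol. The main obstacle is simply keeping track of the numerous index shifts without sign or off-by-one errors; conceptually, everything follows cleanly from the two basic facts that iterated shift operators concatenate and that $M_r$ commutes with them up to a scalar multiple of a lower-order iterated shift operator, which vanishes in the expected way under $\iota^*$.
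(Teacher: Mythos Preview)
Your approach is correct and matches the paper's own proof exactly: the paper states that the first two identities follow immediately from Corollary~\ref{RFvsSF}, while the last two additionally require Lemma~\ref{CommShiftM} (whose statement you correctly guessed), and then omits the Pochhammer bookkeeping that you have supplied. Your alternative derivation of Lemma~\ref{CommShiftM} via the conjugation formula \eqref{NewForm} is a nice variant---slicker than the paper's inductive proof based on repeated application of Lemma~\ref{GeneralSL(2)}---since the commuting shifted Laplacians make the telescoping transparent in one step.
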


\begin{proof} The first two identities immediately follow from Corollary
\ref{RFvsSF}. The proofs of the last two identities also require Lemma
\ref{CommShiftM}. We omit the details.
\end{proof}

The following result is a consequence of the conformal transformation law in
Proposition \ref{ConformalCovarianceP}.

\begin{lem}\label{CTL-SN} Let $N \in \N$ and assume that $\hat{h}=e^{2\varphi}h$. Then
\begin{equation*}
   S_N(\hat{g}_+;\lambda) = \left(\frac{\kappa^*(r)}{r} \right)^{n-\lambda} \kappa^* S_N(g_+;\lambda)
   \kappa_* \left(\frac{\kappa^*(r)}{r} \right)^{\lambda+N-n}.
\end{equation*}
\end{lem}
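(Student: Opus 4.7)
The plan is to prove this by induction on $N$, with Proposition \ref{ConformalCovarianceP} providing both the base case and the engine of the induction step. First, I rewrite the conclusion of Proposition \ref{ConformalCovarianceP} using $(r/\kappa^*(r))^{\lambda-n} = (\kappa^*(r)/r)^{n-\lambda}$ and $(r/\kappa^*(r))^{n-\lambda-1} = (\kappa^*(r)/r)^{\lambda+1-n}$, which gives exactly the claimed formula in the case $N=1$.

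For the inductive step, assume the formula holds for $N$. Using the defining factorization $S_{N+1}(g_+;\lambda) = S(g_+;\lambda) \circ S_N(g_+;\lambda+1)$ (which also holds for $\hat{g}_+$), I would write
\begin{align*}
S_{N+1}(\hat{g}_+;\lambda) &= S(\hat{g}_+;\lambda) \circ S_N(\hat{g}_+;\lambda+1)\\
&= \left(\frac{\kappa^*(r)}{r}\right)^{n-\lambda}\!\!\kappa^* S(g_+;\lambda) \kappa_* \left(\frac{\kappa^*(r)}{r}\right)^{\lambda+1-n}\!\!\left(\frac{\kappa^*(r)}{r}\right)^{n-\lambda-1}\!\!\kappa^* S_N(g_+;\lambda+1)\kappa_* \left(\frac{\kappa^*(r)}{r}\right)^{\lambda+N+1-n}.
\end{align*}
Two miracles now occur in the middle: the exponents $(\lambda+1-n)+(n-\lambda-1)=0$, so the inner multiplication operators telescope, and $\kappa_*\circ\kappa^* = \mathrm{id}$, so the inner conjugation also collapses. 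What remains is
$$\left(\frac{\kappa^*(r)}{r}\right)^{n-\lambda}\!\kappa^*\bigl(S(g_+;\lambda)\circ S_N(g_+;\lambda+1)\bigr)\kappa_*\left(\frac{\kappa^*(r)}{r}\right)^{\lambda+(N+1)-n},$$
which is the claim at level $N+1$.

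There is no real obstacle here; the proof is essentially bookkeeping, and the only thing to check carefully is that the exponent arithmetic at the junctions between successive factors in the composition is consistent (the key identity being $(\lambda+j+1-n)+(n-\lambda-j-1)=0$ at every step). The argument extends verbatim to a direct, non-inductive telescoping proof, where one applies Proposition \ref{ConformalCovarianceP} simultaneously to each of the $N$ factors in $S_N(\hat{g}_+;\lambda)=\prod_{j=0}^{N-1} S(\hat{g}_+;\lambda+j)$ and observes that all the intermediate conjugating factors between adjacent shift operators cancel pairwise, leaving only the outermost multiplication operators with exponents $n-\lambda$ and $\lambda+N-n$.
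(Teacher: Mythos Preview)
Your proof is correct and follows essentially the same approach as the paper: apply Proposition \ref{ConformalCovarianceP} to each factor $S(\hat{g}_+;\lambda+j)$ and observe that the intermediate conjugating factors telescope. The paper presents this as a one-line direct telescoping argument rather than an induction, which you yourself note is an equivalent formulation at the end of your write-up.
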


\begin{proof} Proposition \ref{ConformalCovarianceP} yields
\begin{equation*}
   S(\hat{g}_+;\lambda) = \left(\frac{\kappa^*(r)}{r}\right)^{n-\lambda} \kappa^* S(g_+;\lambda)
   \kappa_* \left(\frac{\kappa^*(r)}{r} \right)^{\lambda-n+1}.
\end{equation*}
An application of that identity to the composition $S_N(\hat{g}_+;\lambda)$
gives
\begin{equation*}
   S_N(\hat{g}_+;\lambda) =\left(\frac{\kappa^*(r)}{r} \right)^{n-\lambda} \kappa^* S(g_+;\lambda)
   \cdots S(g_+;\lambda+N-1)
   \kappa_* \left(\frac{\kappa^*(r)}{r} \right)^{\lambda+N-n}.
\end{equation*}
The proof is complete.
\end{proof}

Now by combining Theorem \ref{DeltaN} with Lemma \ref{CTL-SN}, we obtain an
alternative proof of the conformal transformation law of residue families
\cite[Theorem $6.6.3$]{J1}.

\begin{cor}\label{ConFormalTrafoBSFamily} Let $N\in\N$ so that $N \le n+1$ for even $n$.
Assume that $\hat{h}=e^{2\varphi}h$. Then
\begin{equation*}
   D_N^{res}(\hat{h};\lambda) = e^{(\lambda-N)\varphi} D_N^{res}(h;\lambda) \kappa_*
   \left(\frac{\kappa^*(r)}{r} \right)^{\lambda}.
\end{equation*}
\end{cor}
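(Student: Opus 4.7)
The plan is to combine Corollary \ref{RFvsSF}, which identifies residue families with restrictions of iterated shift operators, with the conformal transformation law for these operators in Lemma \ref{CTL-SN}. Treating first the even-order case, write $N = 2K$. By Corollary \ref{RFvsSF}, there is a rational scalar $c_K(\lambda)$, independent of the metric, such that
\begin{equation*}
D_{2K}^{res}(h;\lambda) = c_K(\lambda) \, \iota^* S_{2K}(g_+;\mu), \qquad \mu := \lambda + n - 2K,
\end{equation*}
and similarly for $\hat{h}$ with $g_+$ replaced by $\hat{g}_+$. Applying Lemma \ref{CTL-SN} to the parameter $\mu$ gives
\begin{equation*}
    S_{2K}(\hat{g}_+;\mu) = \left(\frac{\kappa^*(r)}{r}\right)^{2K-\lambda} \circ \kappa^* \circ S_{2K}(g_+;\mu) \circ \kappa_* \circ \left(\frac{\kappa^*(r)}{r}\right)^{\lambda},
\end{equation*}
where I used $n - \mu = 2K - \lambda$ and $\mu + 2K - n = \lambda$.

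The second step is to apply $\iota^*$ to both sides. Since $\kappa$ fixes $M$ pointwise we have $\kappa \circ \iota = \iota$, hence $\iota^* \circ \kappa^* = \iota^*$, and any multiplication operator factors out of $\iota^*$. It remains to evaluate $\iota^*(\kappa^*(r)/r)$. From the construction of $\kappa$ recalled in Section \ref{PEandGJMS}, namely $\rho = r e^u$ with $\iota^*(u) = \varphi$ together with $\kappa^*(\rho) = r$ after relabeling the new normal coordinate, one obtains $\kappa^*(r) = r \, e^{-\kappa^*(u)}$, so that $\iota^*(\kappa^*(r)/r) = e^{-\varphi}$. Consequently $\iota^*(\kappa^*(r)/r)^{2K-\lambda} = e^{(\lambda - 2K)\varphi}$, and multiplying by $c_K(\lambda)$ yields exactly the claimed identity in the case $N = 2K$.

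The odd-order case $N = 2K+1$ runs through identically, now with $\mu = \lambda + n - (2K+1)$: the exponents produced by Lemma \ref{CTL-SN} become $n - \mu = (2K+1) - \lambda$ on the left and $\mu + (2K+1) - n = \lambda$ on the right, and the same restriction manoeuvre yields the uniform conformal weight $e^{(\lambda - N)\varphi}$ for all $N$. The main conceptual point in the argument is the identification $\iota^*(\kappa^*(r)/r) = e^{-\varphi}$, which pins down both the existence and the precise exponent of the conformal factor; everything else is bookkeeping on the parameters in Lemma \ref{CTL-SN} and Corollary \ref{RFvsSF}.
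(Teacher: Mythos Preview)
Your proof is correct and follows essentially the same route as the paper's: combine the identification of residue families with restricted iterated shift operators (the paper cites Theorem~\ref{DeltaN}, you use the equivalent Corollary~\ref{RFvsSF}), apply Lemma~\ref{CTL-SN}, use that $\kappa$ restricts to the identity on $M$, and invoke $\iota^*(\kappa^*(r)/r)=e^{-\varphi}$ (which the paper quotes as the limit formula from \cite[(6.6.16)]{J1} and which you rederive from the construction in Section~\ref{PEandGJMS}). Your bookkeeping of the exponents is correct, and the range of $N$ in Corollary~\ref{RFvsSF} exactly matches the hypothesis $N\le n+1$ for even $n$ once the even/odd split is unwound.
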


\begin{proof} The assertion is a consequence of Theorem \ref{DeltaN}, Lemma \ref{CTL-SN}, the limit
formula \cite[(6.6.16)]{J1}
\begin{equation*}
   \lim_{r\to 0} \left(\frac{\kappa^*(r)}{r}\right)=e^{-\varphi}
\end{equation*}
and the fact that $\kappa$ acts as the identity on $M$.
\end{proof}

Finally, we prove

\begin{lem}\label{CommShiftM} Let $N\in \N$ and $f\in C^\infty(M_+^\circ)$. Then
\begin{equation}\label{Comm}
    S_N(g_+;\lambda)(r f) = r S_N(g_+;\lambda-1)(f) - N(2\lambda-n+N) S_{N-1}(g_+;\lambda)(f).
\end{equation}
\end{lem}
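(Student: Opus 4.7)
The plan is to prove the commutation relation \eqref{Comm} by induction on $N$, using Lemma \ref{GeneralSL(2)} (with $a=1$) as both the base case and the key commutation tool in the inductive step.

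For the base case $N=1$, the identity reduces to Lemma \ref{GeneralSL(2)} with $a=1$:
\begin{equation*}
S(g_+;\lambda)(rf) = r S(g_+;\lambda-1)f - (2\lambda-n+1)f,
\end{equation*}
which matches the claim since $S_0(g_+;\lambda)=\id$ and $-N(2\lambda-n+N) = -(2\lambda-n+1)$ at $N=1$.

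For the inductive step, I would exploit the factorization $S_N(g_+;\lambda) = S(g_+;\lambda)\circ S_{N-1}(g_+;\lambda+1)$ coming directly from the definition \eqref{eq:ThePpolynomial}. Applying the induction hypothesis to $S_{N-1}(g_+;\lambda+1)(rf)$ yields
\begin{equation*}
S_{N-1}(g_+;\lambda+1)(rf) = r S_{N-1}(g_+;\lambda)f - (N-1)(2\lambda-n+N+1) S_{N-2}(g_+;\lambda+1)f.
\end{equation*}
Applying $S(g_+;\lambda)$ to both sides, I would use Lemma \ref{GeneralSL(2)} with $a=1$ to commute $S(g_+;\lambda)$ past the leading factor $r$ in the first term, producing $r S(g_+;\lambda-1)S_{N-1}(g_+;\lambda)f - (2\lambda-n+1)S_{N-1}(g_+;\lambda)f$. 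Recognizing $S(g_+;\lambda-1)\circ S_{N-1}(g_+;\lambda) = S_N(g_+;\lambda-1)$ and $S(g_+;\lambda)\circ S_{N-2}(g_+;\lambda+1) = S_{N-1}(g_+;\lambda)$ — both immediate from \eqref{eq:ThePpolynomial} — I collect the two multiples of $S_{N-1}(g_+;\lambda)f$.

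The remaining step is a one-line arithmetic check: setting $A = 2\lambda - n$, one verifies
\begin{equation*}
(A+1) + (N-1)(A+N+1) = N(A+N),
\end{equation*}
which is an elementary expansion. This produces the claimed coefficient $-N(2\lambda-n+N)$ and completes the induction. No real obstacle is expected; the only risk is bookkeeping errors with the shifted parameters, which is why I prefer the factorization from the left so that the induction hypothesis is applied to the inner family $S_{N-1}(g_+;\lambda+1)$ and Lemma \ref{GeneralSL(2)} is applied only to a single outer factor.
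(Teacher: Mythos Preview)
Your proof is correct and follows essentially the same approach as the paper: both arguments reduce the claim to repeated applications of Lemma \ref{GeneralSL(2)} with $a=1$, moving $r$ past one shift operator at a time and summing the resulting scalar corrections. The only cosmetic difference is that you package the iteration as a formal induction (peeling off the leftmost factor $S(g_+;\lambda)$), whereas the paper writes the iteration explicitly from the right and closes with the identity $\sum_{j=1}^N 2j = N(N+1)$.
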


\begin{proof} We iteratively apply Lemma \ref{GeneralSL(2)} to compute
\begin{align*}
   S_N(g_+;\lambda)(r f) & = S(g_+;\lambda) \circ \cdots \circ S(g_+;\lambda+N-1)(r f)\\
   & = S_{N-1}(g_+;\lambda) r S(g_+;\lambda+N-2)(f) - (2\lambda-n+2N-1) S_{N-1}(g_+;\lambda)(f)\\
   & = \cdots = r S_N(g_+;\lambda-1)(f) - \left[N(2\lambda\!-\!n\!+\!2N\!+\!1) - \sum_{j=1}^N 2j\right]
   S_{N-1}(g_+;\lambda)(f)
\end{align*}
Now the identity $\sum_{j=1}^N 2j = N(N+1)$ completes the proof.
\end{proof}

The identity \eqref{Comm} obviously holds true also for all $f \in
C^\infty(M_+)$.

\section{Applications}\label{applications}

In the present section, we further exploit the relation between families of
shift operators and residue families. The flow of information will be in both
directions, i.e., we use facts on families of shift operators to derive
properties of residue families and also use properties of residue families to
derive properties of families of shift operators.

It was shown in \cite{J1,J2} that residue families $D_{2N}^{res}(h;\lambda)$
satisfy two systems of factorization identities. In Section \ref{recover} we
provide new proofs of these identities. They rest on the identification of two
special values of the families of shift operator in terms of GJMS operators.
These are given in Theorem \ref{BigGJMS} and Theorem \ref{BSOperatorVsGJMS},
respectively. Theorem \ref{BigGJMS} will be further exploited in Section
\ref{expansions}. In Section \ref{sol}, we shall describe a compressed
formulation of the recursive algorithm for the solution operators
$\T_{2j}(h;\lambda)$ in terms of the families $S_N(g_+;\lambda)$. In the last
section, we derive a new formula for all $Q$-curvatures (critical and
subcritical ones) in even dimension in terms of shift operators.

\subsection{Shift operators and GJMS operators}\label{recover}

In \cite{J1,J2}, it was proves that, for $N \in \N$ with $2N \le n$ for even
$n$, the even-order residue families $D_{2N}^{res}(h;\nu)$ satisfy the
identities
\begin{equation}\label{RF-Fact}
   D_{2N}^{res}\left(h;-\frac{n}{2}\!+\!N\right) = P_{2N}(h)\iota^* \quad
   \mbox{and} \quad
   D_{2N}^{res}\left(h;-\frac{n\!+\!1}{2}\!+\!N\right) = \iota^*P_{2N}(\bar{g}).
\end{equation}
We briefly comment on the well-definedness of the second identity. For odd $n$,
the Taylor coefficients of $h_r$ are determined by $h$. Hence, for any $N \in
\N$, the left-hand side of the second identity is determined by $h$. The
right-hand side of this identity involves a GJMS operator in even dimension
$n+1$. We recall that, for general metrics, these are only defined for
subcritical orders $2N \le n+1$. But here they are defined for all $N \in \N$.
In fact, there is an explicit formula for the Taylor coefficients of a
Poincar\'e metric of $\bar{g}$ \cite{J2}. These are determined by $h_r$, i.e.,
by $h$. Thus the right-hand side is well-defined for all $N \in \N$. For even
$n$, the left-hand side of the second identity is defined for $2N \le n$. The
right-hand side uses derivatives of a Poincar\'e metric for $\bar{g}$ which are
determined by $h$.

For even $N$, the identities \eqref{RF-Fact} are the simplest respective
special cases of the systems
\begin{equation}\label{factor-a}
   D_N^{res} \left(h;-\frac{n}{2}\!+\!N\!-\!k\right)
   = P_{2k}(h) D_{N-2k}^{res}\left(h;-\frac{n}{2}\!+\!N\!-\!k\right), \;\; 2 \le 2k \le N
\end{equation}
and
\begin{equation}\label{factor-b}
   D_N^{res}\left(h;-\frac{n\!+\!1}{2}\!+\!k\right)
   = D_{N-2k}^{res}\left(h;-\frac{n\!+\!1}{2}\!-\!k\right) P_{2k}(\bar{g}), \;\; 2 \le 2k \le N
\end{equation}
of factorization identities \cite[Theorems 3.1--3.2]{J2}. They play an
important role in connection with the description of recursive structures among
GJMS operators and $Q$-curvatures. Here we shall give an {\em independent
proof} of the second system. The arguments will also prove their counterparts
for odd-order residue families. Moreover, we derive the first system from its
special case $k=N$, i.e., from the first identity in \eqref{RF-Fact}. The new
proofs completely differ from earlier arguments.

We start with the proof of system \eqref{factor-b}. The proof rests on
Corollary \ref{RFvsSF} and two basic facts. The first of these is also of
independent interest. It will be used in Section \ref{expansions}.

\begin{theorem}\label{BigGJMS} Let $N \in \N$ so that $2N \le n$ if $n$ is even.
Set $m = \frac{n+1}{2}$. Then
\begin{equation*}
    S_N(g_+;\m-1) = r^N P_{2N}(\bar{g})
\end{equation*}
up to an error term in $O(r^\infty)$ for odd $n$ and $o(r^{n-N})$ for even $n$.
Moreover, the equality is true without an error term if $g_+$ is Einstein.
\end{theorem}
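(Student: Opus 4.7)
The plan is to reduce $S_N(g_+;m-1)$ to a manifestly GJMS-like expression via a telescoping conjugation argument, using formula \eqref{NewForm} from Remark \ref{conjugation} as the basic tool. First, I would substitute $\lambda = m-1+k$ into \eqref{NewForm} and use $n = 2m-1$ to write
\begin{equation*}
   S(g_+;m-1+k) = r^{k-m} \circ (\Delta_{g_+}+(m+k)(m-1-k)) \circ r^{m-1-k}
\end{equation*}
for $k = 0, 1, \dots, N-1$. Forming the composition $S_N(g_+;m-1) = S(g_+;m-1) \circ \cdots \circ S(g_+;m+N-2)$, the inner conjugating powers $r^{m-1-k}$ and $r^{(k+1)-m}$ cancel consecutively (since $r^{m-1-k} \cdot r^{(k+1)-m} = r^0$), so the composition collapses to
\begin{equation*}
   S_N(g_+;m-1) = r^{-m} \circ \prod_{k=0}^{N-1}\bigl(\Delta_{g_+}+(m+k)(m-1-k)\bigr) \circ r^{m-N}.
\end{equation*}

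Next I would treat the Einstein case. If $\Ric(g_+)+ng_+=0$ on the $(n+1)$-dimensional manifold $M_+^\circ$, then in the parametrization of \eqref{eq:GJMSOnEinstein} one has $2\mu \cdot n = -n$, hence $\mu=-1/2$, so with $l=k+1$ the Einstein product formula gives
\begin{equation*}
   P_{2N}(g_+) = \prod_{l=1}^{N}\bigl(\Delta_{g_+}+(m+l-1)(m-l)\bigr) = \prod_{k=0}^{N-1}\bigl(\Delta_{g_+}+(m+k)(m-1-k)\bigr).
\end{equation*}
Therefore $S_N(g_+;m-1) = r^{-m} P_{2N}(g_+) r^{m-N}$. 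The GJMS conformal covariance \eqref{eq:ConformalPropertyGJMS} applied to $\bar{g} = e^{2\log r}g_+$ in dimension $n+1$ reads $P_{2N}(\bar{g}) = r^{-(m+N)} P_{2N}(g_+) r^{m-N}$, which rearranges to $r^{-m} P_{2N}(g_+) r^{m-N} = r^N P_{2N}(\bar{g})$. This settles the Einstein case without any error term.

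The main obstacle is controlling the error for a general (only asymptotically Einstein) Poincar\'e metric, where the product $\prod_{k=0}^{N-1}(\Delta_{g_+}+(m+k)(m-1-k))$ need not equal the true GJMS operator $P_{2N}(g_+)$. My strategy would be a formal-power-series comparison: choose a metric $\tilde g_+$ that is Einstein (with $\Ric(\tilde g_+)+n\tilde g_+=0$) and whose Taylor coefficients of $\tilde h_r$ along $r=0$ agree with those of $h_r$ to the order at which the asymptotic Einstein condition holds. Then apply the Einstein case to $\tilde g_+$, and estimate the discrepancy $S_N(g_+;m-1)-S_N(\tilde g_+;m-1)$ and $r^NP_{2N}(\bar g)-r^NP_{2N}(\bar{\tilde g})$ in terms of $h_r-\tilde h_r$. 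For odd $n$, even Poincar\'e metrics satisfy $\Ric(g_+)+ng_+=O(r^\infty)$, so the discrepancy is $O(r^\infty)$; for even $n$, the asymptotic Einstein condition \eqref{Ricci-even} together with the trace condition \eqref{VT} limits the discrepancy to $o(r^{n-N})$ after accounting for the $2N$ derivatives present in $S_N(g_+;m-1)$. Combined with the telescoping identity and the conformal rescaling of GJMS operators from the first two paragraphs, this yields the theorem with the stated error terms.
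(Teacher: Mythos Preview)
Your telescoping conjugation argument and the treatment of the Einstein case are essentially identical to the paper's proof: both use \eqref{NewForm} to collapse the composition to $r^{-m}\prod_{l=1}^N(\Delta_{g_+}+(m+l-1)(m-l))\,r^{m-N}$, identify the product as $P_{2N}(g_+)$ via \eqref{eq:GJMSOnEinstein}, and then invoke conformal covariance to pass to $r^N P_{2N}(\bar g)$.

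Where you diverge is in the non-Einstein case. The paper does not construct an auxiliary Einstein metric; instead it invokes directly the approximate product formulas
\[
   P_{2N}(g_+) = \prod_{l=1}^N \bigl(\Delta_{g_+}+(m+l-1)(m-l)\bigr) + O(r^\infty)\quad\text{(odd $n$)},
\]
respectively $+\,o(r^n)$ for even $n$ (see \eqref{GJMS-prod-g-odd}, \eqref{GJMS-prod-g-even}), and then runs the same conjugation by $r^{-m}(\cdot)r^{m-N}$, which turns an $o(r^n)$ error into $o(r^{n-N})$. Your comparison-with-an-Einstein-metric approach is morally a way of arguing toward the same approximate product formulas, but as stated it has a real technical obstacle: a genuine smooth Einstein metric $\tilde g_+$ in normal form relative to $h$ need not exist. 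In odd dimensions only a formal power series solution is guaranteed (so you would have to phrase the comparison at the level of Taylor jets), and in even dimensions the obstruction tensor typically forces $\log r$ terms into any solution of $\Ric+n g=O(r^\infty)$, so there is no smooth $\tilde g_+$ with matching Taylor coefficients to which you can apply the Einstein case cleanly. You would also need to control how $P_{2N}(\bar g)$ depends on the high-order Taylor coefficients of $h_r$, which is not immediate. The paper sidesteps all of this by packaging the needed fact into the single statement that on an asymptotically Einstein manifold the GJMS operator agrees with the shifted-Laplacian product up to the stated error.
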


\begin{proof} We first consider the case $g_+$ Einstein. The identity
\eqref{eq:GJMSOnEinstein} shows that the $2N$-th order GJMS operator of $g_+$
is given by the product
\begin{equation}\label{eq:GJMSForg+}
    P_{2N}(g_+) = \prod_{l=1}^N \left(\Delta_{g_+}+ (\m\!+\!l\!-\!1)(m\!-\!l)\right).
\end{equation}
The conformal covariance of GJMS operators implies that
$$
   P_{2N}(\bar{g}) = r^{-\m-N}P_{2N}(g_+)r^{\m-N}.
$$
By the definition \eqref{eq:ThePpolynomial}, we have
\begin{equation*}
    S_N(g_+;\m-1) = S(g_+;\m-1) \circ \cdots \circ S(g_+;\m+N-2).
\end{equation*}
Hence, using \eqref{NewForm} and \eqref{eq:GJMSForg+}, we obtain
\begin{align*}
   S_N(g_+;\m-1) & = r^{-\m} \left(\Delta_{g_+} + \m(\m\!-\!1)\right)
   \cdots \left(\Delta_{g_+} + (\m\!+\!N\!-\!1)(\m\!-\!N)\right) r^{\m-N}\\
   & = r^{-\m} P_{2N}(g_+) r^{\m-N}\\
   & = r^N P_{2N}(\bar{g}).
\end{align*}
This completes the proof for $g_+$ Einstein. For general Poincar\'e metrics and
odd $n$, the assertion follows by similar arguments using the generalization
\begin{equation}\label{GJMS-prod-g-odd}
   P_{2N}(g_+) = \prod_{l=1}^N \left(\Delta_{g_+}+ (\m\!+\!l\!-\!1)(m\!-\!l)\right) +
   O(r^\infty)
\end{equation}
of \eqref{eq:GJMSForg+}. The conformal covariance of $P_{2N}$ show that
$$
   S_N(g_+;\m-1) = r^N P_{2N}(\bar{g}) + O(r^\infty).
$$
Similarly, for even $n$, the formula
\begin{equation}\label{GJMS-prod-g-even}
   P_{2N}(g_+) = \prod_{l=1}^N \left(\Delta_{g_+}+ (\m\!+\!l\!-\!1)(m\!-\!l)\right) + o(r^n)
\end{equation}
(see Remark \ref{AE} for $N=1$) and the conformal covariance of $P_{2N}$ show
that
$$
   S_N(g_+;\m-1) = r^N P_{2N}(\bar{g}) + o(r^{n-N}).
$$
The proof is complete.
\end{proof}

Theorem \ref{BigGJMS} directly implies the following $N$ factorization
identities.

\begin{cor}\label{IdentitiesForBSOperator} Let $N \in \N$ so that $2N \le n$ for even $n$.
Let $0 \leq k \leq N-1$. Then
\begin{equation*}
    S_N(g_+;\m-k-1) = S_k(g_+;\m-k-1) r^{N-k} P_{2N-2k}(\bar{g})
\end{equation*}
if $g_+$ is Einstein. For general Poincar\'e metrics $g_+$, the identity holds
true with an error term in $O(r^\infty)$ for odd $n$ and
$$
     S_k(g_+;\m-k-1) o(r^{n-N+k})
$$
for even $n$.
\end{cor}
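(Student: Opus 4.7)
The plan is to derive the corollary from Theorem~\ref{BigGJMS} by splitting the composition $S_N(g_+;\m-k-1)$ into two consecutive blocks of shift factors and identifying the second block as the object handled by Theorem~\ref{BigGJMS}.

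Concretely, I would first unfold the definition \eqref{eq:ThePpolynomial}: the composition
\begin{equation*}
   S_N(g_+;\m-k-1) = S(g_+;\m-k-1) \circ S(g_+;\m-k) \circ \cdots \circ S(g_+;\m+N-k-2)
\end{equation*}
has $N$ factors whose spectral parameters form the arithmetic progression $\m-k-1, \m-k, \ldots, \m+N-k-2$. The first $k$ factors have parameters $\m-k-1, \ldots, \m-2$ and therefore coincide with $S_k(g_+;\m-k-1)$, while the remaining $N-k$ factors have parameters $\m-1, \m, \ldots, \m+N-k-2$ and thus coincide with $S_{N-k}(g_+;\m-1)$. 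This gives the composition identity
\begin{equation*}
   S_N(g_+;\m-k-1) = S_k(g_+;\m-k-1) \circ S_{N-k}(g_+;\m-1),
\end{equation*}
which is purely a matter of matching Pochhammer-type index ranges and requires no analysis.

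Next I would invoke Theorem~\ref{BigGJMS} with $N$ replaced by $N-k$; this is permitted since $0 \le k \le N-1$ ensures $N-k \ge 1$, and the hypothesis $2N \le n$ (for even $n$) implies $2(N-k)\le n$. In the Einstein case the theorem gives $S_{N-k}(g_+;\m-1) = r^{N-k} P_{2N-2k}(\bar g)$ without remainder, and inserting this into the composition identity yields the claimed factorization. For a general Poincar\'e metric, Theorem~\ref{BigGJMS} produces an error term of size $O(r^\infty)$ in the odd case and $o(r^{n-(N-k)}) = o(r^{n-N+k})$ in the even case; applying the differential operator $S_k(g_+;\m-k-1)$ on the left preserves the $O(r^\infty)$ bound for odd $n$ and gives precisely the stated error $S_k(g_+;\m-k-1)\,o(r^{n-N+k})$ for even $n$.

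The main (very modest) obstacle is purely bookkeeping: checking that the ranges $[\m-k-1,\m-2]$ and $[\m-1,\m+N-k-2]$ of the spectral parameters correctly partition the full range $[\m-k-1,\m+N-k-2]$ of $N$ consecutive integers, and that the error terms from the non-Einstein version of Theorem~\ref{BigGJMS} can indeed be carried through the left composition with $S_k(g_+;\m-k-1)$ without being degraded further than stated. Beyond that, the corollary is an immediate consequence of Theorem~\ref{BigGJMS}.
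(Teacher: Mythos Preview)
Your proposal is correct and is exactly the argument the paper has in mind: the corollary is stated immediately after Theorem~\ref{BigGJMS} with the remark that it ``directly implies'' these factorizations, and your splitting $S_N(g_+;\m-k-1)=S_k(g_+;\m-k-1)\circ S_{N-k}(g_+;\m-1)$ followed by an application of Theorem~\ref{BigGJMS} (with $N$ replaced by $N-k$) is precisely that direct implication, including the handling of the error terms.
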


The second basic fact is a generalization of Lemma \ref{GeneralSL(2)} which
states that
\begin{equation*}
    S(g_+;\lambda) r^j = r^j S(g_+;\lambda\!-\!j) - j(2\lambda\!-\!n\!-\!j\!+\!2) r^{j-1}
\end{equation*}
for $j \in \N$.

\begin{lem}\label{VG-SL(2)} Let $k,j\in\N$. Then
\begin{align}\label{eq:ExtendedSL}
    S_k(g_+;\lambda) r^j = \sum_{l=0}^k \binom{k}{l}(-j)_l (2\lambda\!-\!n\!-\!j\!+\!k\!+\!1)_l r^{j-l}
    S_{k-l}(g_+;\lambda\!-\!j\!+\!l).
\end{align}
\end{lem}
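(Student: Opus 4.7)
The plan is to prove the identity by induction on $k$. The base case $k=1$ is exactly Lemma \ref{GeneralSL(2)}: the right-hand side collapses to the two terms
\[
   r^j S(g_+;\lambda-j) - j(2\lambda-n-j+2)r^{j-1},
\]
which matches \eqref{eq:ExtendedSL} for $k=1$ after checking $\binom{1}{0}=\binom{1}{1}=1$, $(-j)_0 = 1$, and $(-j)_1 = -j$.

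For the inductive step, I will write $S_{k+1}(g_+;\lambda) = S_k(g_+;\lambda) \circ S(g_+;\lambda+k)$, apply Lemma \ref{GeneralSL(2)} once to move $r^j$ past the rightmost factor, and then apply the inductive hypothesis to each of the two resulting terms. This produces a first piece of the form
\[
   \sum_{l=0}^k \binom{k}{l}(-j)_l (2\lambda-n-j+k+1)_l\, r^{j-l}\, S_{k-l}(g_+;\lambda-j+l)\circ S(g_+;\lambda+k-j),
\]
and a second piece arising from the boundary term $-j(2\lambda+2k-n+2-j)r^{j-1}$, which after applying the inductive hypothesis with $j$ replaced by $j-1$ gives a sum indexed by $l' = l+1$. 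The key observation is that in the first piece the trailing factor $S(g_+;\lambda+k-j)$ is exactly what is needed to extend $S_{k-l}(g_+;\lambda-j+l)$ to $S_{k-l+1}(g_+;\lambda-j+l)$, since the arguments form the consecutive block $\lambda-j+l,\lambda-j+l+1,\ldots,\lambda+k-j$. After reindexing the second sum, both sums run over the same range and carry the correct shift operator $S_{k+1-l}(g_+;\lambda-j+l)$.

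What remains is to verify that the scalar coefficients combine correctly. Setting $\mu = 2\lambda-n-j$ and using $(-j)_{l-1}\cdot(-j) = (-j)_l$ together with $(\mu+k+1)_l = (\mu+k+1)(\mu+k+2)_{l-1}$ and $(\mu+k+2)_l = (\mu+k+2)_{l-1}(\mu+k+l+1)$, the required identity reduces to
\[
   \binom{k}{l}(\mu+k+1) + \binom{k}{l-1}(\mu+2k+2) = \binom{k+1}{l}(\mu+k+l+1).
\]
This follows from Pascal's rule together with the elementary relation $(k-l+1)\binom{k}{l-1} = l\binom{k}{l}$, which handles the $\mu$-independent part; the $\mu$-dependent part is just Pascal's rule directly.

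The only real obstacle is the bookkeeping in matching Pochhammer symbols between the two sums after reindexing; once the substitution $\mu = 2\lambda-n-j$ is made, the calculation becomes an essentially combinatorial identity. Since Lemma \ref{GeneralSL(2)} also holds on $C^\infty(M_+)$, the resulting identity \eqref{eq:ExtendedSL} is likewise valid both on $C^\infty(M_+^\circ)$ and on $C^\infty(M_+)$.
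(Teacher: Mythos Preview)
Your proof is correct and follows essentially the same strategy as the paper: induction on $k$, with the single-factor commutation of Lemma \ref{GeneralSL(2)} driving the inductive step and the remainder reduced to a Pochhammer/binomial identity. The only difference is cosmetic: the paper peels off the \emph{leftmost} factor, writing $S_k(g_+;\lambda)=S(g_+;\lambda)\circ S_{k-1}(g_+;\lambda+1)$ and applying the inductive hypothesis first, whereas you peel off the \emph{rightmost} factor $S(g_+;\lambda+k)$ and apply Lemma \ref{GeneralSL(2)} first. Your route has the pleasant feature that the trailing factor $S(g_+;\lambda+k-j)$ attaches directly to $S_{k-l}(g_+;\lambda-j+l)$ to form $S_{k+1-l}(g_+;\lambda-j+l)$ without further manipulation; the paper's route requires the analogous observation on the left and leads to a slightly different (but equivalent) combinatorial cleanup.

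One small slip in your write-up: the relation $(-j)_{l-1}\cdot(-j)=(-j)_l$ is false as stated. What you actually need (and what your setup gives, since the second sum carries $(-(j-1))_{l-1}$ from the inductive hypothesis with $j\mapsto j-1$ together with the prefactor $-j$) is $(-j)\cdot(-(j-1))_{l-1}=(-j)_l$. With that correction your reduction to $\binom{k}{l}(\mu+k+1)+\binom{k}{l-1}(\mu+2k+2)=\binom{k+1}{l}(\mu+k+l+1)$ is valid, and the latter follows exactly as you indicate from Pascal's rule and $(k-l+1)\binom{k}{l-1}=l\binom{k}{l}$.
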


\begin{proof} We use induction over $k$. For $k=1$, we have
\begin{align*}
    S_1(g_+;\lambda)r^j & = S(g_+;\lambda) r^j = r^j S(g_+;\lambda\!-\!j) - j(2\lambda\!-\!n\!-\!j\!+\!2) r^{j-1}\\
    & = \sum_{l=0}^1 \binom{1}{l} (-j)_l (2\lambda\!-\!n\!-\!j\!+\!2)_l r^{j-l} S_{1-l}(g_+;\lambda\!-\!j\!+\!l).
\end{align*}
Now assume that the assertion holds true for $k-1$. Then we compute
\begin{align*}
    & S_k(g_+;\lambda)r^j \\ & = S(g_+;\lambda) \circ S_{k-1}(g_+;\lambda\!+\!1) r^j\\
    & = S(g_+;\lambda) \sum_{l=0}^{k-1} \binom{k-1}{l} (-j)_l (2\lambda\!-\!n\!-\!j\!+\!k\!+\!2)_l r^{j-l}
    S_{k-l-1}(g_+;\lambda\!-\!j\!+\!l\!+\!1)\\
    & = \sum_{l=0}^{k-1} \binom{k-1}{l}(-j)_l (2\lambda\!-\!n\!-\!j\!+\!k+\!2)_l S(g_+;\lambda) r^{j-l}
    S_{k-l-1}(g_+;\lambda\!-\!j\!+\!l\!+\!1).
\end{align*}
By Lemma \ref{GeneralSL(2)}, the last display equals
\begin{align*}
    & \sum_{l=0}^{k-1} \binom{k-1}{l} (-j)_l (2\lambda\!-\!n\!-\!j\!+\!k\!+\!2)_l r^{j-l}
    S(g_+;\lambda\!-\!j\!+\!l) S_{k-l-1}(g_+;\lambda\!-\!j\!+\!l\!+\!1)\\
    & - \sum_{l=0}^{k-1} \binom{k-1}{l} (-j)_l (2\lambda\!-\!n\!-\!j\!+\!k+\!2)_l \\
    & \quad \times (j-l)(2\lambda\!-\!n\!-\!j\!+\!l\!+\!2) r^{j-l-1} S_{k-l-1}(g_+;\lambda\!-\!j+\!l+\!1).
\end{align*}
Shifting the summation index in the second sum and simplification gives
\begin{align*}
    & \sum_{l=0}^{k-1} \binom{k-1}{l} (-j)_l (2\lambda\!-\!n\!-\!j\!+\!k+\!2)_l r^{j-l}
    S_{k-l}(g_+;\lambda\!-\!j\!+\!l)\\
    & + \sum_{l=1}^{k} \binom{k-1}{l-1} (-j)_{l} (2\lambda\!-\!n\!-\!j\!+\!k\!+\!2)_{l-1}
    (2\lambda\!-\!n\!-\!j\!+\!l\!+\!1) r^{j-l} S_{k-l}(g_+;\lambda\!-\!j\!+\!l).
\end{align*}
Now observe that
\begin{equation*}
    (2\lambda\!-\!n\!-\!j\!+\!k\!+\!2)_l = (2\lambda\!-\!n\!-\!j\!+\!k\!+\!1)_l
    + l(2\lambda\!-\!n\!-\!j\!+\!k\!+\!2)_{l-1}
\end{equation*}
and
\begin{multline*}
    (2\lambda-n-j+k+2)_{l-1}(2\lambda-n-j+l+1) = (2\lambda-n-j+k+1)_l-(k-l)(2\lambda-n-j+k+2)_{l-1}
\end{multline*}
as well as
\begin{equation*}
    l \binom{k-1}{l}-(k-l)\binom{k-1}{l-1} =0 \quad \mbox{and} \quad \binom{k-1}{l}-\binom{k-1}{l-1} =\binom{k}{l}.
\end{equation*}
Putting things together, we conclude
\begin{equation*}
    S_k(g_+;\lambda)r^j = \sum_{l=0}^k \binom{k}{l}(-j)_l (2\lambda\!-\!n\!-\!j\!+\!k\!+\!1)_l r^{j-l}
    S_{k-l}(g_+;\lambda\!-\!j\!+\!l).
\end{equation*}
The proof is complete.
\end{proof}

Now we are able to prove the system \eqref{factor-b}.

\begin{theorem}\label{second-np} Let $N\in\N$ so that $N \le n+1$ for even $n$. Then
\begin{equation}\label{factor-2}
    D_N^{res}\left(h;-\frac{n\!+\!1}{2}\!+\!k\right)
    = D_{N-2k}^{res}\left(h;-\frac{n\!+\!1}{2}\!-\!k\right) P_{2k}(\bar{g})
\end{equation}
for all $0\leq 2k\leq N$.
\end{theorem}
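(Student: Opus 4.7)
The plan is to translate \eqref{factor-2} into an identity for iterated shift operators via Corollary \ref{RFvsSF}, and then to derive that identity by combining the factorization of Corollary \ref{IdentitiesForBSOperator} with the commutation formula of Lemma \ref{VG-SL(2)}. Setting $\m=(n+1)/2$, substitution of $\lambda=-\m+k$ on the left-hand side of \eqref{factor-2} and $\lambda=-\m-k$ on the right-hand side into \eqref{ResFam-Shift-even}--\eqref{ResFam-Shift-odd} shows that, for both parities of $N$, the $\S$-operator argument equals $\m-1+k-N$ on both sides. After recording the normalizing Pochhammer factors, \eqref{factor-2} reduces to a shift-operator identity of the form
\begin{equation*}
\S_N(h;\m-1+k-N) \;=\; C_{N,k}\,\S_{N-2k}(h;\m-1+k-N)\,P_{2k}(\bar{g})
\end{equation*}
for an explicit combinatorial constant $C_{N,k}$ forced by the ratio of normalizations.

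To establish this shift-operator identity, I apply Corollary \ref{IdentitiesForBSOperator} with its internal index equal to $N-k$, obtaining
\begin{equation*}
S_N(g_+;\m-1+k-N) \;=\; S_{N-k}(g_+;\m-1+k-N)\, r^k\, P_{2k}(\bar{g}),
\end{equation*}
modulo an error of order $O(r^\infty)$ (odd $n$) or of a type annihilated by $\iota^*$ (even $n$, in the admissible range). Next, I push the factor $r^k$ through $S_{N-k}(g_+;\m-1+k-N)$ using Lemma \ref{VG-SL(2)}. The crucial observation is that at the parameter $\lambda=\m-1+k-N$ the coefficient $2\lambda-n-k+(N-k)+1$ appearing in \eqref{eq:ExtendedSL} collapses to $-N$. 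After composition with $\iota^*$, all summands with index $l<k$ are killed because $\iota^* r^{k-l}=0$, and all summands with $l>k$ vanish because $(-k)_l=0$; only the term with $l=k$ survives, yielding
\begin{equation*}
\iota^* S_{N-k}(g_+;\m-1+k-N)\, r^k \;=\; \binom{N-k}{k}(-1)^k k!\,(-N)_k\,\S_{N-2k}(h;\m-1+k-N).
\end{equation*}

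The last step is to match this combinatorial constant with the ratio $C_{N,k}$ dictated by the normalizations in \eqref{ResFam-Shift-even}--\eqref{ResFam-Shift-odd}. Writing $N=2N'$ (with a parallel calculation for odd $N=2N'+1$), splitting each Pochhammer factor $(-2N')_{N'}$ and $(k-2N')_{N'}$ at position $k$, and using $(-2m)_m=(-1)^m(2m)!/m!$, the matching reduces to the elementary identity
\begin{equation*}
\binom{2N'-k}{k}(-1)^k k!\,(-2(N'-k))_{N'-k} \;=\; (-2N'+k)_{N'-k}(-N')_k,
\end{equation*}
which is verified by rewriting both sides as ratios of ordinary factorials. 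The main obstacle is thus not geometric but the bookkeeping around this Pochhammer matching; a secondary technical point is that Corollary \ref{IdentitiesForBSOperator} is stated only for $2N\le n$ when $n$ is even, so for the remaining cases in the range $n/2<N\le n+1$ one either refines the analysis of the error terms in the proof of Theorem \ref{BigGJMS}, or handles the critical and near-critical odd-order residue families by first reducing them to even-order ones via the first identity in Corollary \ref{Factor-residue}.
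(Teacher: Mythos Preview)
Your proposal is correct and follows essentially the same route as the paper: translate via Corollary~\ref{RFvsSF}, factor off $r^kP_{2k}(\bar g)$ using Corollary~\ref{IdentitiesForBSOperator}, commute $r^k$ past $S_{N-k}$ via Lemma~\ref{VG-SL(2)} so that only the $l=k$ term survives after applying $\iota^*$, and match the resulting Pochhammer constants. Your combinatorial identity is exactly the paper's identity $\binom{2N-k}{k}(-k)_k(-2N)_k(-2N+2k)_{N-k}(k-2N)_{N-k}=(-2N)_N(k-2N)_N$ after splitting $(-2N)_N$ and $(k-2N)_N$ at position $k$, and your remark about controlling the error terms for even $n$ near the top of the admissible range is precisely the refinement the paper carries out at the end of its proof.
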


\begin{proof} We only discuss the case of even $N$. The proof in the odd-order case
is analogous. By Corollary \ref{RFvsSF}, we have
\begin{equation*}
    D_{2N}^{res}\left(h;-\frac{n\!+\!1}{2}\!+\!k\right) = \frac{1}{(-2N)_N(k-2N)_N} \iota^*
    S_{2N}\left(g_+;\frac{n\!-\!1}{2}\!+\!k\!-\!2N\right).
\end{equation*}
Now we additionally assume that $g_+$ is Einstein. Then Corollary
\ref{IdentitiesForBSOperator} implies the identity
\begin{equation}\label{inter}
    S_{2N}\left(g_+;\frac{n\!-\!1}{2}\!+\!k\!-\!2N\right)
    = S_{2N-k}\left(g_+;\frac{n\!-\!1}{2}\!+\!k\!-\!2N\right) r^k P_{2k}(\bar{g})
\end{equation}
We use Lemma \ref{VG-SL(2)} to conclude that
\begin{multline*}
    S_{2N-k}\left(g_+;\frac{n\!-\!1}{2}\!+\!k\!-\!2N\right)r^k \\
    = \sum_{l=0}^{2N-k} \binom{2N-k}{l} (-k)_l(-2N)_l r^{k-l}
    S_{2N-k-l}\left(g_+;\frac{n\!-\!1}{2}\!+\!l\!-\!2N\right).
\end{multline*}
Note that no summand with summation index $l\geq k+1$ will contribute due to
$(-k)_l=0$. By restriction to $r=0$, the last display yields
\begin{multline}\label{inter2}
    \iota^*S_{2N-k}\left(g_+;\frac{n\!-\!1}{2}\!+\!k\!-\!2N\right) r^k \\
    = \binom{2N-k}{k}(-k)_k(-2N)_k \iota^* S_{2N-2k}\left(g_+;\frac{n\!-\!1}{2}\!+\!k\!-\!2N\right).
\end{multline}
Finally, by Corollary \ref{RFvsSF}, we have
\begin{align*}
    D_{2N-2k}^{res}\left(h;-\frac{n\!+\!1}{2}\!-\!k\right)
    = \frac{1}{(-2N\!+\!2k)_{N-k} (k\!-\!2N)_{N-k}} \iota^*
    S_{2N-2k}\left(g_+;\frac{n\!-\!1}{2}\!+\!k\!-\!2N\right).
\end{align*}
Combining these observation, proves
\begin{align*}
    D_{2N}^{res}\left(h;-\frac{n\!+\!1}{2}\!+\!k\right)
    & = \frac{\binom{2N-k}{k} (-k)_k (-2N)_k (-2N\!+\!2k)_{N-k} (k\!-\!2N)_{N-k}}{(-2N)_N(k-2N)_N} \\
    & \times D_{2N-2k}^{res}\left(h;-\frac{n\!+\!1}{2}\!-\!k\right) P_{2k}(\bar{g}).
\end{align*}
Now the combinatorial identity
\begin{align*}
    \frac{\binom{2N-k}{k}(-k)_k(-2N)_k(-2N+2k)_{N-k}(k-2N)_{N-k}}{(-2N)_N(k-2N)_N} = 1
\end{align*}
completes the proof for Einstein $g_+$. For general Poincar\'e metrics $g_+$,
we have to control the error terms coming from Corollary
\ref{IdentitiesForBSOperator}. For odd $n$, error terms obviously do to not
contribute to \eqref{inter2}. For even $n$, the relation \eqref{inter} contains
an error term in
$$
   S_{2N-k}\left(g_+;\frac{n\!-\!1}{2}\!+\!k\!-\!2N\right) o(r^{n-k}).
$$
By Lemma \ref{VG-SL(2)}, this contribution is contained in $o(r^{n-2N})$. Hence
its composition with $\iota^*$ vanishes if $2N \le n$.\footnote{The arguments
show that, for even $n$ and $2N < n$, the weaker estimate $O(r^n)$ in
\eqref{GJMS-prod-g-even} suffices. However, the critical case $2N=n$ requires
the stronger estimates $o(r^n)$.}
\end{proof}

\begin{remark} The proof of the factorizations \eqref{factor-2} for even $N$
given in \cite{J2} assumes that $g_+$ is Einstein. A closer inspection of this
proof shows that it can be refined to establish the assertion in full
generality. The refinement rests on the factorizations \eqref{GJMS-prod-g-odd}
and \eqref{GJMS-prod-g-even} with remainder terms. The point is that the
remainder terms do not contribute to the residue calculations in the refinement
of that proof. Note also that, along these lines, again only the critical case
$2N=n$ (for $n$ even) requires the remainder term $o(r^n)$ in
\eqref{GJMS-prod-g-even}.
\end{remark}

We continue with the discussion of the factorization identities
\eqref{factor-a}. Their proof rests on Corollary \ref{RFvsSF} and the
identification of the value
$$
   \iota^*S_{2N}\left(g_+;\frac{n}{2}\!-\!N\right) = \S_{2N}\left(h,\frac{n}{2}\!-\!N\right)
$$
as a tangential operator being a multiple of $P_{2N}(h) \iota^*$ . This fact
actually will be deduced from the first identity in \eqref{RF-Fact}.
Unfortunately, we do not have an independent proof of this identity. The
following result contains the relevant details and some further information.

\begin{theorem}\label{BSOperatorVsGJMS} Let $k\in\N$. The operator $\iota^*
S_k(g_+;\frac{n-k}{2})$ defines a tangential differential operator $\P_k:
C^\infty(M) \to C^\infty(M)$, i.e.,
\begin{equation*}
    \iota^* S_{k}\left(g_+;\frac{n-k}{2}\right) = \P_k \iota^*.
\end{equation*}
For $k \in \N$ with $k \le n$ for even $n$, the operator $\P_k$ only depends on
$h$ and is conformally covariant, i.e.,
\begin{equation*}
   \P_{k}(\hat{h})=e^{-(\frac{n+k}{2})\varphi} \circ \P_{k}(h) \circ e^{(\frac{n-k}{2})\varphi}
\end{equation*}
for $\hat{h}=e^{2\varphi}h$. For $k=2N-1$, the operator $\P_k(h)$ vanishes
identically. For $k=2N$, the operator $\P_k(h)$ is proportional to the GJMS
operator $P_{2N}(h)$ of $(M,h)$:
\begin{equation}\label{identify}
   \P_{2N}(h) = ((2N\!-\!1)!!)^2 P_{2N}(h).
\end{equation}
Finally, we have
\begin{equation}\label{Seven-special}
    \iota^* S_{2N}\left(g_+;\frac{n\!-\!1}{2}\!-\!N\right) = (2N)! \iota^* P_{2N}(\bar{g})
\end{equation}
and
\begin{equation}\label{Sodd-special}
    \iota^* S_{2N+1}\left(g_+;\frac{n\!-\!3}{2}\!-\!N\right) = (2N+2)! \iota^* \partial_r
    P_{2N}(\bar{g}).
\end{equation}
\end{theorem}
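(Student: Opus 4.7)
The strategy is to read off every assertion from the proportionality in Corollary~\ref{RFvsSF} between $\iota^*S_k(g_+;\lambda+n-k)$ and the residue family $D_k^{res}(h;\lambda)$, combined with the factorization identities \eqref{RF-Fact} from \cite{J1,J2} and their odd-order counterpart in Theorem~\ref{second-np}. After this reduction, each identification becomes a Pochhammer evaluation; no further geometric input with $g_+$ is required.

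\emph{Tangentiality.} With $N=k$ and $\lambda=(n-k)/2$ the scalar $k(2\lambda-n+k)$ vanishes, so Lemma~\ref{CommShiftM} yields
\[
   S_k\!\left(g_+;\tfrac{n-k}{2}\right)(rf)\;=\;r\,S_k\!\left(g_+;\tfrac{n-k}{2}-1\right)f
\]
for every $f\in C^\infty(M_+)$. Composing with $\iota^*$ annihilates the right-hand side, so $\iota^*S_k(g_+;(n-k)/2)$ vanishes on every smooth function vanishing at $r=0$. By Hadamard's lemma it therefore factors as $\P_k\circ\iota^*$ for a well-defined $\P_k\colon C^\infty(M)\to C^\infty(M)$.

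\emph{Conformal covariance.} I would plug $\lambda=(n-k)/2$ into Lemma~\ref{CTL-SN} and apply $\iota^*$. The identities $\iota^*\kappa^*=\iota^*$ and $\iota^*(\kappa_*g)=\iota^*g$ for scalars $g$ (both consequences of $\kappa|_M=\id$), together with the boundary limit $\lim_{r\to 0}\kappa^*(r)/r=e^{-\varphi}$ from \cite[(6.6.16)]{J1}, yield
\[
   \P_k(\hat h)\,f\;=\;e^{-\frac{n+k}{2}\varphi}\,\P_k(h)\!\left(e^{\frac{n-k}{2}\varphi}f\right).
\]

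\emph{Identifications and vanishing.} The remaining claims are three Pochhammer computations based on Corollary~\ref{RFvsSF}. For \eqref{identify}, I would substitute $\lambda=-n/2+N$ into \eqref{ResFam-Shift-even}; the factor $(-2N)_N(\tfrac{1}{2}-N)_N$ simplifies to $((2N-1)!!)^2$ via $(2N)!=2^N N!(2N-1)!!$, and the first identity of \eqref{RF-Fact} concludes. For \eqref{Seven-special}, substitute $\lambda=-(n+1)/2+N$ into \eqref{ResFam-Shift-even}; now $(-2N)_N(-N)_N=(2N)!$ and the second identity of \eqref{RF-Fact} concludes. For \eqref{Sodd-special}, substitute the same $\lambda$ into \eqref{ResFam-Shift-odd}; the normalizer collapses to $2(-2N-1)_{N+1}(-N-1)_{N+1}=(2N+2)!$, and Theorem~\ref{second-np} with $2k=2N$ combined with $D_1^{res}(h;\mu)=\iota^*\partial_r$ (immediate from \eqref{deltaN}) rewrites $D_{2N+1}^{res}(h;-(n+1)/2+N)$ as $\iota^*\partial_r P_{2N}(\bar g)$. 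Finally, $\P_{2N-1}(h)\equiv 0$ drops out of \eqref{ResFam-Shift-odd} at the spectral value $(n-2N+1)/2$: the Pochhammer $(1-N)_N$ in the normalizer carries the factor $0$, forcing $\S_{2N-1}(h;(n-2N+1)/2)=0$. The only real labour is the Pochhammer bookkeeping, and the only mild subtlety is tracking the several appearances of $\kappa$ and $\kappa_*$ in the conformal covariance step.
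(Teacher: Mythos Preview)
Your proposal is correct and follows essentially the same route as the paper. Both arguments obtain tangentiality from Lemma~\ref{CommShiftM} at the parameter $\lambda=(n-k)/2$, derive conformal covariance from Lemma~\ref{CTL-SN} together with the boundary limit $\kappa^*(r)/r\to e^{-\varphi}$, and reduce all identifications (the vanishing for odd $k$, \eqref{identify}, \eqref{Seven-special}, \eqref{Sodd-special}) to Corollary~\ref{RFvsSF} combined with the factorizations \eqref{RF-Fact} and Theorem~\ref{second-np}; the Pochhammer evaluations you record match the paper's. The only point you leave implicit is that the dependence of $\P_k$ on $h$ alone (rather than on all of $g_+$) also comes from Corollary~\ref{RFvsSF}, but this is covered by your opening strategy statement.
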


\begin{proof} We recall that a differential operator $D: C^\infty(M_+) \to
C^\infty(M_+)$ restricts to a tangential operator with respect to $M$ if and
only if $D(r f) = r D^\prime(f)$ for all $f\in C^\infty(M_+)$ and some
differential operator $D^\prime: C^\infty(M_+) \to C^\infty(M_+)$. By Lemma
\ref{CommShiftM}, we have
\begin{equation*}
    S_k(g_+;\lambda)(r f) = r S_k(g_+;\lambda-1)f - k(2\lambda-n+k) S_{k-1}(g_+;\lambda-1)f
\end{equation*}
for all $f \in C^\infty(M_+)$. Hence the Taylor series of $\iota^*
S_{k}(g_+;\frac{n-k}{2})(r f)$ in the variable $r$ has vanishing constant term.
It follows that $\iota^* S_k(g_+;\frac{n-k}{2})$ defines a tangential operator,
i.e., there is an operator
\begin{equation*}
   \P_k: C^\infty(M) \to C^\infty(M)
\end{equation*}
so that
$$
  \iota^* S_k \left(g_+;\frac{n-k}{2}\right) = \P_k \iota^*.
$$
Now assume that $k \in \N$ with $k \le n$ for even $n$. The further properties
of $\P_k$ follow from the relation between shift operators and residue families
(Corollary \ref{RFvsSF}). In particular, for these values of $k$, the operators
$\P_k$ are determined by $h$ and the conformal transformation law for $\P_k$
follows from Corollary \ref{ConFormalTrafoBSFamily}. The fact that $\P_k$
vanishes identically for odd $k=2N-1$ is obvious. Indeed, by Corollary
\ref{RFvsSF}, we have
\begin{align*}
    \P_{2N-1} & =\iota^* S_{2N-1}\left(g_+;\frac{n+1}{2}\!-\!N\right) \\
    & = 2(-2N\!+\!1)_N (-N\!+\!1)_N D^{res}_{2N-1}\left(h;N\!-\!\frac{n+1}{2}\right) = 0
\end{align*}
since $(-N\!+\!1)_N =0$ and $D^{res}_{2N-1}(h;\lambda)$ is regular in
$\lambda$. By Corollary \ref{RFvsSF} and the first factorization relation in
\eqref{RF-Fact}, we conclude that
\begin{align*}
    \P_{2N} \iota^* & = \iota^* S_{2N}\left(g_+;\frac{n}{2}\!-\!N\right) \\
    & =(-2N)_N \left(-N\!+\!\frac{1}{2}\right)_N D_{2N}^{res}\left(h;N\!-\!\frac{n}{2}\right) \\
    & =((2N\!-\!1)!!)^2 P_{2N}(h) \iota^*.
\end{align*}
The identity \eqref{Seven-special} follows from Corollary \ref{RFvsSF} and the
second factorization identity in \eqref{RF-Fact}. Similarly, the identity
\eqref{Sodd-special} follows from the factorization identity
$$
   D_{2N+1}^{res}\left(h;-\frac{n\!+\!1}{2}\!+\!N\right) =
   D_1^{res}\left(h;-\frac{n\!+\!1}{2}\!-\!N\right) P_{2N}(\bar{g})
$$
for odd-order residue families (Theorem \ref{second-np}). By Corollary
\ref{RFvsSF}, this relation is equivalent to
\begin{align*}
   & \frac{1}{2(-2N\!-\!1)_{N+1} (-N\!-\!1)_{N+1}} \iota^*
   S_{2N+1}\left(g_+;-N\!+\!\frac{n\!-\!3}{2}\right) \\ & = \frac{1}{2(N\!+\!1)} \iota^*
   S_1\left(g_+;-N\!+\!\frac{n\!-\!3}{2}\right) P_{2N}(\bar{g}).
\end{align*}
Simplification proves the claim. The proof is complete.
\end{proof}

Some comments on the latter results are in order.

Theorem \ref{BSOperatorVsGJMS} overlaps with \cite[Theorems 4.1 and 4.5]{GW}.
Indeed, assume that $g_+$ is Einstein. Then, by Proposition
\ref{EquivalentOperators}, $S_k(g_+;\lambda)$ can be regarded as a composition
of $k$ degenerate Laplacians $\IDD$. The fact that $\iota^*
S_k(g_+;\frac{n-k}{2})$ is tangential, is a consequence of a basic
$sl(2)$-structure for the degenerate Laplacian \cite[Section 3.1]{GW}. It holds
true for compositions of degenerate Laplacians in a much more general setting.
In the present situation, Lemma \ref{CommShiftM} and Lemma \ref{VG-SL(2)}
reflect that structure. In order to relate the tangential operator $\P_{2N}$ to
the GJMS operator $P_{2N}(h)$, we used the first relation in \eqref{RF-Fact}.
Note that this relation is a consequence of the basic residue relation
\eqref{Res-SO} (derived in \cite{GZ} from the ambient metric construction of
$P_{2N}(h)$). In \cite{GW}, the identification \eqref{identify} also rests on
ambient metric arguments.

As a consequence of Theorem \ref{BSOperatorVsGJMS}, we obtain a new proof of
the first system \eqref{factor-a} of factorization identities for residue
families.

\begin{cor}\label{res-factor} Let $N \in \N$ with $N \le n+1$ for even $n$.
Then we have the factorization identities
\begin{equation}\label{factor-res}
   D_N^{res} \left(h;-\frac{n}{2}\!+\!N\!-\!k\right)
   = P_{2k}(h) D_{N-2k}^{res}\left(h;-\frac{n}{2}\!+\!N\!-\!k\right)
\end{equation}
for $0 \le 2k \le N$.
\end{cor}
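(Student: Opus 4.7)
My plan is to reduce the claim to a single structural identification, namely Theorem \ref{BSOperatorVsGJMS}, together with the multiplicative structure of the iterated shift operator. By Corollary \ref{RFvsSF}, both $D_N^{res}(h;\lambda)$ and $D_{N-2k}^{res}(h;\lambda)$ are, up to explicit Pochhammer scalars, the composition of $\iota^*$ with an iterated shift operator at a prescribed spectral parameter. Specializing $\lambda=-\tfrac{n}{2}+N-k$ in \eqref{ResFam-Shift-even}--\eqref{ResFam-Shift-odd}, the shift operator on the left-hand side acquires spectral parameter $\tfrac{n}{2}-k$, which is precisely the value at which Theorem \ref{BSOperatorVsGJMS} realizes a block of $2k$ shift factors as $((2k-1)!!)^2 P_{2k}(h)\iota^*$.

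The core step is then the identity
\begin{equation*}
   S_N\!\left(g_+;\tfrac{n}{2}-k\right) = S_{2k}\!\left(g_+;\tfrac{n}{2}-k\right)\circ S_{N-2k}\!\left(g_+;\tfrac{n}{2}+k\right),
\end{equation*}
which is immediate from the definition \eqref{eq:ThePpolynomial}. Composing with $\iota^*$ on the left, Theorem \ref{BSOperatorVsGJMS} extracts $((2k-1)!!)^2 P_{2k}(h)$, leaving $\iota^* S_{N-2k}(g_+;\tfrac{n}{2}+k)$, which by a second application of Corollary \ref{RFvsSF} is proportional to $D_{N-2k}^{res}(h;-\tfrac{n}{2}+N-k)$. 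The shift in spectral parameter is arranged precisely so that the two residue families appear at the same value of $\lambda$.

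What remains is a scalar verification. The constants produced by the two invocations of Corollary \ref{RFvsSF} and the factor $((2k-1)!!)^2$ must collapse to $1$. For even $N=2M$ this amounts to
\begin{equation*}
   ((2k-1)!!)^2 (-2M+2k)_{M-k}(k+\tfrac12)_{M-k} = (-2M)_M (-k+\tfrac12)_M,
\end{equation*}
which is checked by rewriting $(-k+\tfrac12)_M = 2^{-M}(-1)^k(2k-1)!!(2M-2k-1)!!$ and $(k+\tfrac12)_{M-k} = 2^{-(M-k)}(2M-1)!!/(2k-1)!!$, then applying the standard relation $(2\ell-1)!! = (2\ell)!/(2^\ell \ell!)$; both sides reduce to $(-1)^{M-k}2^{-2M+k}(2M)!(2M-2k)!(2k-1)!!/(M!(M-k)!)$. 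The odd case $N=2M+1$ reduces to a parallel identity handled identically using the odd branch of Corollary \ref{RFvsSF}.

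Regarding applicability of Theorem \ref{BSOperatorVsGJMS}, one needs $2k\le n$ when $n$ is even. Since $2k$ is even and we have $2k\le N\le n+1$ with $n$ even (so $n+1$ odd), parity forces $2k\le n$. The main obstacle, in my view, is purely bookkeeping — the combinatorial collapse of the three Pochhammer scalars to $1$. Conceptually there is no difficulty: a single algebraic manipulation produces every factorization in \eqref{factor-res} simultaneously, with no induction on $k$ or $N$ required, in contrast to the earlier proofs in \cite{J1,J2}.
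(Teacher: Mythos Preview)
Your proof is correct and follows essentially the same route as the paper's: decompose $S_N(g_+;\tfrac{n}{2}-k)=S_{2k}(g_+;\tfrac{n}{2}-k)\circ S_{N-2k}(g_+;\tfrac{n}{2}+k)$, apply Theorem~\ref{BSOperatorVsGJMS} to the left block, translate both sides via Corollary~\ref{RFvsSF}, and verify the resulting Pochhammer identity. Your treatment is in fact slightly more careful than the paper's---you spell out the scalar collapse (which the paper dismisses as ``Simplification proves the claim'') and explicitly justify the applicability condition $2k\le n$ via parity, and your spectral parameter $\tfrac{n}{2}+k$ for the second block is correct (the paper's displayed intermediate step has a harmless typo reading $\tfrac{n}{2}+2k$).
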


\begin{proof} The obvious relation
$$
   S_N(g_+;\lambda) = S_{2k}(g_+;\lambda) S_{N-2k}(g_+;\lambda+2k)
$$
implies the identity
$$
   \iota^* S_N\left(g_+;\frac{n}{2}\!-\!k\right) = \iota^*
   S_{2k}\left(g_+;\frac{n}{2}-k\right) S_{N-2k}\left(g_+;\frac{n}{2}\!+\!2k\right).
$$
By Theorem \ref{BSOperatorVsGJMS}, it is equivalent to
$$
   \iota^* S_N\left(g_+;\frac{n}{2}\!-\!k\right) = ((2k\!-\!1)!!)^2 P_{2k}(h) \iota^*
   S_{N-2k}\left(g_+;\frac{n}{2}\!+\!2k\right).
$$
Now the identity \eqref{ResFam-Shift-even} shows that, for even $N$, this
relation can be restated as
\begin{multline*}
   (-2N)_N \left(-k\!+\!\frac{1}{2}\right)_N D_{2N}^{res}\left(h;-\frac{n}{2}\!+\!2N\!-\!k\right) \\
   = ((2k\!-\!1)!!)^2 (-2N\!+\!2k)_{N-k} \left(k\!+\!\frac{1}{2}\right)_{N-k}
   P_{2k}(h) D_{2N-2k}^{res}\left(h;-\frac{n}{2}\!+\!2N\!-\!k\right).
\end{multline*}
Simplification proves the claim for even-order residues families. We omit the
analogous proof for odd-order residue families which utilizes the identity
\eqref{ResFam-Shift-odd}.
\end{proof}

Since the family $S_N(g_+;\lambda)$ is a polynomial of degree $N$ in $\lambda$,
the $N$ identities in Corollary \ref{IdentitiesForBSOperator} do {\em not}
suffice to determine $S_N(g_+;\lambda)$ in terms of lower-order
$S_k(g_+;\lambda)$ and GJMS operators of $\bar{g}$. However, that is possible
by combining Corollary \ref{IdentitiesForBSOperator} with the following formula
for the leading coefficient of that polynomial. We also recall that $w(r) =
\sqrt{v(r)}$.

\begin{prop}\label{LeadingTermForBSOperator} Let $N\in\N$. Then
\begin{equation}\label{LTS}
    \frac{1}{N!}\frac{d^N}{d \lambda^N} S_N(g_+;\lambda) = (-2)^N  w^{-1} \partial_r^N(w \cdot).
\end{equation}
\end{prop}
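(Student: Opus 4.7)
The plan is to extract the coefficient of $\lambda^N$ in the polynomial $S_N(g_+;\lambda)$, using the fact that $\frac{1}{N!}\frac{d^N}{d\lambda^N}$ applied to a polynomial of degree $N$ returns precisely this coefficient.

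First I would analyze $S(g_+;\lambda)$ itself as a polynomial in $\lambda$. From the definition \eqref{eq:BSOperators}, it is degree one in $\lambda$ with leading $\lambda$-coefficient
\begin{equation*}
   -2\partial_r - \tfrac{1}{2}\tr(h_r^{-1}\dot h_r).
\end{equation*}
Using the identity \eqref{v-trace}, i.e., $\dot v/v = \tfrac12 \tr(h_r^{-1}\dot h_r)$, together with $w=\sqrt{v}$ so that $\dot v/v = 2\dot w/w$, this leading coefficient becomes
\begin{equation*}
   -2\bigl(\partial_r + \dot w/w\bigr) = -2\, w^{-1}\partial_r(w\,\cdot).
\end{equation*}
So the $N=1$ case is immediate.

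Next I would observe that in the composition
\begin{equation*}
   S_N(g_+;\lambda) = S(g_+;\lambda)\circ S(g_+;\lambda+1)\circ\cdots\circ S(g_+;\lambda+N-1),
\end{equation*}
each factor is degree one in $\lambda$, and crucially the leading $\lambda$-coefficient of $S(g_+;\lambda+k)$ is independent of $k$, namely the operator $B := -2\,w^{-1}\partial_r(w\,\cdot)$. Consequently the coefficient of $\lambda^N$ in the product is simply $B^N$, since the shifts $k=0,\dots,N-1$ only contribute lower-order-in-$\lambda$ terms.

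Finally, the conjugation structure of $B$ makes its powers collapse cleanly:
\begin{equation*}
   B^N = (-2)^N\bigl(w^{-1}\partial_r w\bigr)^N = (-2)^N\, w^{-1}\partial_r^N(w\,\cdot),
\end{equation*}
because $w^{-1}\partial_r w \cdot w^{-1}\partial_r w = w^{-1}\partial_r^2 w$, and so on. Combining with the fact that $\frac{1}{N!}\frac{d^N}{d\lambda^N}$ of a polynomial in $\lambda$ extracts the coefficient of $\lambda^N$ yields \eqref{LTS}.

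There is essentially no obstacle here; the only thing to be careful about is confirming that the $\lambda$-leading parts of the factors in $S_N(g_+;\lambda)$ do not acquire $k$-dependence under the shifts $\lambda\mapsto\lambda+k$, which is visible from the explicit form \eqref{eq:BSOperators} since the term $-(2\lambda-n+1)\partial_r -\tfrac12(\lambda-n+1)\tr(h_r^{-1}\dot h_r)$ has its $\lambda$-linear part independent of the $k$-shift.
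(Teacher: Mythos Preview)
Your proof is correct and follows essentially the same approach as the paper: both isolate the $\lambda$-linear part of $S(g_+;\lambda)$ as $-2\,w^{-1}\partial_r(w\cdot)$ via \eqref{v-trace} and $w=\sqrt{v}$, then observe that the leading $\lambda^N$-coefficient of the composition $S_N(g_+;\lambda)$ is the product of the (shift-independent) leading coefficients of the factors. The paper's version is slightly terser, leaving the telescoping $(w^{-1}\partial_r w)^N = w^{-1}\partial_r^N w$ implicit, whereas you spell it out.
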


\begin{proof} The relations \eqref{v-trace} and
$$
    w^{-1}\partial_r(w f) = \frac{1}{2} v^{-1} \partial_r(v) f + \partial_r f, \quad f\in C^\infty(M_+^\circ)
$$
show that we can rewrite the operator $S(g_+;\lambda)$ as
\begin{equation*}
    S(g_+;\lambda) = r\Delta_{\bar{g}} + (n-1)(\partial_r+ v^{-1} \partial_r (v)) - 2\lambda w^{-1}\partial_r(w\cdot).
\end{equation*}
Now the leading coefficient of the polynomial $\lambda \to S_N(g_+;\lambda)$
coincides with the product of the leading coefficients of the $N$ factors. But
these are all given by $-2 w^{-1}\partial_r(w\cdot)$. This proves the
assertion.
\end{proof}

For later use, we introduce the notation $\partial_r^w \st
w^{-1}\partial_r(w\cdot)$. In these terms, the right-hand side of \eqref{LTS}
equals $(-2)^N (\partial_r^w)^N$. In Section \ref{expansions}, we shall discuss
further consequences of Corollary \ref{IdentitiesForBSOperator} and Proposition
\ref{LeadingTermForBSOperator}.

Finally, we combine Proposition \ref{LeadingTermForBSOperator} with Corollary
\ref{RFvsSF} to read off the leading coefficients of residue families. The
definition of residue families implies formulas for these coefficients in terms
of solution operators $\T_{2j}(h;\lambda)$ and renormalized volume coefficients
$v_{2j}$. The following result shows how these can be simplified.

\begin{cor}\label{leading} Let $N \in \N$ with $2N \le n$ for even $n$.
The leading coefficient of the even-order residue family $\lambda \mapsto
D_{2N}^{res}(h;\lambda)$ equals
$$
   (-1)^N 2^{2N} \frac{N!}{(2N)!} \iota^* \partial_r^{2N} (w \cdot).
$$
Similarly, the leading coefficient of the odd-order residue family $\lambda
\mapsto D_{2N+1}^{res}(h;\lambda)$ equals
$$
   (-1)^N 2^{2N} \frac{N!}{(2N\!+\!1)!} \iota^* \partial_r^{2N+1} (w \cdot).
$$
\end{cor}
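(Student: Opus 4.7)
The plan is to read off the leading coefficient directly from the formula of Corollary~\ref{RFvsSF} together with the leading-coefficient formula of Proposition~\ref{LeadingTermForBSOperator}. The key observation is a degree count: the residue family $D_{2N}^{res}(h;\lambda)$ is a polynomial of degree $N$ in $\lambda$, the denominator $(-2N)_N (\lambda+\tfrac{n}{2}-2N+\tfrac{1}{2})_N$ is a polynomial of degree $N$ in $\lambda$, and the numerator $\S_{2N}(h;\lambda+n-2N) = \iota^* S_{2N}(g_+;\lambda+n-2N)$ is a polynomial of degree $2N$ in $\lambda$. Hence the coefficient of $\lambda^N$ in $D_{2N}^{res}(h;\lambda)$ equals the coefficient of $\lambda^{2N}$ in the numerator divided by the coefficient of $\lambda^N$ in the denominator.

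For the numerator, Proposition~\ref{LeadingTermForBSOperator} (applied with $N$ replaced by $2N$) identifies the coefficient of $\lambda^{2N}$ in $S_{2N}(g_+;\lambda)$ as $(-2)^{2N} w^{-1}\partial_r^{2N}(w\,\cdot\,) = 2^{2N}(\partial_r^w)^{2N}$. Since the substitution $\lambda \mapsto \lambda+n-2N$ does not change the leading coefficient, and since $w(0,\cdot)=1$ implies $\iota^* \circ w^{-1} = \iota^*$, the leading coefficient of $\S_{2N}(h;\lambda+n-2N)$ equals $2^{2N}\iota^*\partial_r^{2N}(w\,\cdot\,)$. For the denominator, the coefficient of $\lambda^N$ is $(-2N)_N = (-1)^N (2N)!/N!$.

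Dividing gives
\[
  [\lambda^N]\,D_{2N}^{res}(h;\lambda) = \frac{2^{2N}}{(-2N)_N}\,\iota^*\partial_r^{2N}(w\,\cdot\,) = (-1)^N 2^{2N}\frac{N!}{(2N)!}\,\iota^*\partial_r^{2N}(w\,\cdot\,),
\]
which is the claimed formula. The odd-order case is completely parallel: the numerator has degree $2N+1$ with leading term $(-2)^{2N+1}\iota^*\partial_r^{2N+1}(w\,\cdot\,) = -2^{2N+1}\iota^*\partial_r^{2N+1}(w\,\cdot\,)$, and the denominator contributes the factor $2(-2N-1)_{N+1} = 2(-1)^{N+1}(2N+1)!/N!$; the resulting ratio collapses to $(-1)^N 2^{2N} N!/(2N+1)!\cdot \iota^*\partial_r^{2N+1}(w\,\cdot\,)$.

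There is no real obstacle here: the assumption $2N \le n$ (for even $n$) guarantees that the compositions $\iota^* S_{2N}(g_+;\lambda)$ and $\iota^* S_{2N+1}(g_+;\lambda)$ are determined by $h$, so that $\S_{2N}(h;\lambda)$ and $\S_{2N+1}(h;\lambda)$ are well-defined and the formula of Corollary~\ref{RFvsSF} applies; the remaining step is the elementary Pochhammer identity $(-a)_N = (-1)^N a!/(a-N)!$ used to rewrite the denominator.
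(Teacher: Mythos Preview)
Your proof is correct and follows the same route as the paper's own argument: combine Corollary~\ref{RFvsSF} with Proposition~\ref{LeadingTermForBSOperator} and the observation $\iota^* w = 1$ to read off the top coefficient. You are simply more explicit about the degree count and the Pochhammer arithmetic than the paper, which compresses the same steps into a single line.
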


\begin{proof} The even-order residue family
$D_{2N}^{res}(h;\lambda)$ is a polynomial of degree $N$. By Proposition
\ref{LeadingTermForBSOperator}, Corollary \ref{RFvsSF} and $\iota^* w(r) = 1$,
the coefficient of $\lambda^{N}$ equals
$$
   (-2)^{2N} \frac{1}{(-2N)_N} \iota^* \partial_r^{2N}(w \cdot).
$$
This proves the assertion. The proof in the odd-order case is analogous.
\end{proof}

\subsection{Shift operators and solution operators}\label{sol}

In the present section, we assume that $g_+$ is Einstein and that $n$ is odd.
By the latter assumption, all Taylor coefficients of $h_r$ are determined by
$h$. The obvious modifications for even $n$ are left to the reader. We relate
the solution operator $\T_{2N}(h;\lambda)$ (see Section \ref{setting}) to the
coefficients in the formal power series
\begin{equation}\label{eq:ExpansionOfP}
   S(g_+;\lambda) = -(2\lambda\!-\!n\!+\!1) \partial_r + r \sum_{k\geq 0} r^k S^{(k)}(h;\lambda)
\end{equation}
following from the formal power series
\begin{equation*}
   \Delta_{\bar{g}} + (\lambda\!-\!n\!+\!1)\J(\bar{g}) = \sum_{k\geq 0} r^k S^{(k)}(h;\lambda).
\end{equation*}
Here we used the identity \eqref{Jbar}.\footnote{The assumptions guarantee that
\eqref{Jbar} is an identity of formal power series.} Only the operator
$S^{(0)}(h;\lambda)$ contains two derivatives in $r$. By \eqref{eq:CCLaplacian}
and \cite[Lemma $6.11$]{J1}, the first few coefficients in the expansion
\eqref{eq:ExpansionOfP} are given by the operators
\begin{align}\label{eq:PCoefficients}
   S^{(0)}(h;\lambda)f & = \Delta_h f + \partial_r^2 f + (\lambda\!-\!n\!+\!1)\J(h)f,\notag\\
   S^{(1)}(h;\lambda)f & = -\J(h) \partial_r f, \notag\\
   S^{(2)}(h;\lambda)f & = -\delta_h(\Rho(h)\# d f) -\frac{1}{2} h(d\J(h),df) +
   \frac{1}{2}(\lambda\!-\!n\!+\!1) |\Rho(h)|^2 f, \notag\\
   S^{(3)}(h;\lambda)f & = -\frac{1}{2} |\Rho(h)|^2\partial_r f.
\end{align}
We recall that, under the present assumptions, the solution operators
$\T_{2N}(h;\lambda)$ are well-defined for all $N \in \N$. In the following, we
shall regard functions in $C^\infty(M)$ as functions on $M_+$ that do not
depend on $r$, i.e., $\partial_r$ annihilates functions in $C^\infty(M)$.

\begin{prop}\label{GJMSVsBS} Let $N \in\N$. Then
\begin{equation}\label{algo}
   -2N(2\lambda\!-\!n\!+\!2N) \T_{2N}(h;\lambda)
   = \sum_{k=0}^{N-1} S^{(2N-2k-2)}(h;n\!-\!\lambda\!-\!2k\!-\!1) \T_{2k}(h;\lambda)
\end{equation}
as an identity of operators acting on $C^\infty(M)$. In particular, the $2N$-th
GJMS operator on $(M^n,h)$ is given by
\begin{equation}\label{eq:GJMSInTermsOfBS}
   P_{2N}(h) = -2^{2N-2}((N\!-\!1)!)^2 \sum_{k=0}^{N-1} S^{(2N-2k-2)}\left(h;\frac{n}{2}\!+\!N\!-\!2k\!-\!1\right)
   \T_{2k}\left(h;\frac{n}{2}\!-\!N\right).
\end{equation}
\end{prop}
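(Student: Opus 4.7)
The strategy is to read off the recursion \eqref{algo} from a single identity, namely $S(g_+;n-\lambda-1)\tilde v = O(r^\infty)$, where
$$\tilde v \st \sum_{j\ge 0} r^{2j}\,\T_{2j}(h;\lambda)f,\qquad f\in C^\infty(M).$$
To see that this holds, I would first invoke the conjugation formula \eqref{NewForm} with $\lambda$ replaced by $n-\lambda-1$, giving
$$S(g_+;n-\lambda-1) = r^{-\lambda-1}\circ(\Delta_{g_+}+\lambda(n-\lambda))\circ r^\lambda.$$
Since $r^\lambda\tilde v$ is, by the very definition of the $\T_{2j}(h;\lambda)$, a formal solution of $(\Delta_{g_+}+\lambda(n-\lambda))u = O(r^\infty)$ (Section \ref{PEandGJMS}), we obtain $S(g_+;n-\lambda-1)\tilde v = O(r^\infty)$. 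The entire recursion will come from extracting a single coefficient in this vanishing formal series.

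Next I would expand $S(g_+;\mu)$, with $\mu = n-\lambda-1$, using \eqref{eq:ExpansionOfP} and collect the coefficient of $r^{2N-1}$ in $S(g_+;\mu)\tilde v$. The summand $-(2\mu-n+1)\partial_r$ contributes $-2N(2\mu-n+1)a_{2N}$ with $a_{2j}\st\T_{2j}(h;\lambda)f$. The $r\sum_{k}r^k S^{(k)}(h;\mu)$ part contributes in three ways: (i) even-index $S^{(2l)}$ acting without $\partial_r$ on $r$-independent $a_{2j}$ produces $(D_{2l}-(\mu-n+1)T_{2l})a_{2j}$ at power $r^{2l+2j+1}$, where I write $\Delta_{h_r}=\sum_l r^{2l}D_{2l}$ and $-\J(\bar g)=\sum_l r^{2l}T_{2l}$; (ii) the $\partial_r^2$ term hidden inside $S^{(0)}$ lowers the power and yields $2N(2N-1)a_{2N}$; (iii) odd-index $S^{(2l+1)} = T_{2l}\partial_r$ applied to $r^{2j}a_{2j}$ contributes the crucial $2j\,T_{2l}a_{2j}$ at power $r^{2l+2j+1}$. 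Collecting these and using $\mu=n-\lambda-1$ to simplify the scalar coefficients, the vanishing of the $r^{2N-1}$-coefficient becomes
$$-2N(2\lambda-n+2N)\,\T_{2N}(h;\lambda)f = \sum_{j=0}^{N-1}\bigl[D_{2(N-j-1)}+(\lambda+2j)T_{2(N-j-1)}\bigr]\T_{2j}(h;\lambda)f.$$
To translate the bracket back into the language of the proposition, I would observe that $S^{(2l)}(h;\nu)a = (D_{2l}-(\nu-n+1)T_{2l})a$ on any $a\in C^\infty(M)$ (the $\partial_r^2$ in $S^{(0)}$ acts trivially there); choosing $\nu = n-\lambda-2j-1$ gives $-(\nu-n+1) = \lambda+2j$, so the bracket equals $S^{(2N-2j-2)}(h;n-\lambda-2j-1)\T_{2j}(h;\lambda)f$. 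This is precisely \eqref{algo}.

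For the second assertion, I would specialize \eqref{algo} to $\lambda = \frac{n}{2}-N$. The prefactor $2\lambda-n+2N$ acquires a simple zero while $\T_{2N}(h;\lambda)$ has a simple pole with residue $\frac{1}{2^{2N}N!(N-1)!}P_{2N}(h)$ by the residue formula \eqref{Res-SO}; taking the limit gives
$$\lim_{\lambda\to \frac{n}{2}-N}\bigl[-2N(2\lambda-n+2N)\T_{2N}(h;\lambda)\bigr] = -\frac{1}{2^{2N-2}((N-1)!)^2}P_{2N}(h).$$
The right-hand side of \eqref{algo} is regular at $\lambda=\frac{n}{2}-N$, because each $\T_{2k}(h;\lambda)$ with $k<N$ has poles only at $\lambda=\frac{n}{2}-l$ for $1\le l\le k < N$; evaluating there turns the parameter $n-\lambda-2k-1$ into $\frac{n}{2}+N-2k-1$, and solving for $P_{2N}(h)$ yields \eqref{eq:GJMSInTermsOfBS}. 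The main obstacle in the argument is the combinatorial absorption in the second paragraph: the contribution $2j\,T_{2(N-j-1)}$ generated by the odd-order pieces $S^{(2l+1)}=T_{2l}\partial_r$ of the shift operator is exactly what allows a fixed-parameter computation with $S(g_+;n-\lambda-1)$ to be repackaged as a recursion indexed by the $j$-dependent parameter $n-\lambda-2j-1$; identifying this mechanism is the heart of the proof.
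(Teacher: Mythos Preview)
Your proof is correct. Both you and the paper apply a shift operator to the formal eigenfunction series and read off the recursion from a vanishing coefficient, but the implementations differ. The paper applies $S(g_+;\lambda)$ with a generic parameter $\lambda$ to $M_u(r;\lambda)$, invokes the shift identity (Theorem~\ref{CurvedBS}), and then uses Lemma~\ref{GeneralSL(2)} to pull each factor $r^{\lambda+\nu-n+2j+1}$ past the operator; this automatically produces the $j$-dependent parameter $n-\nu-2j-1$, after which the expansion \eqref{eq:ExpansionOfP} is applied to the $r$-independent $a_{2j}$ so that only the even coefficients $S^{(2l)}$ survive. You instead fix the parameter to $\mu=n-\lambda-1$ from the outset via the conjugation formula \eqref{NewForm}, expand $S(g_+;\mu)$ against the full series $\tilde v$, and track the contributions of the odd pieces $S^{(2l+1)}=T_{2l}\partial_r$ acting on the $r$-dependent terms $r^{2j}a_{2j}$; the resulting $2j\,T_{2(N-j-1)}$ is then absorbed into the zero-order part of $S^{(2(N-j-1))}$ at the shifted parameter $n-\lambda-2j-1$. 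Your repackaging step is precisely the content of Lemma~\ref{GeneralSL(2)} unpacked at the level of Taylor coefficients, so the two arguments are equivalent in substance; the paper's route is slicker because the $sl(2)$-lemma hides the bookkeeping, while yours is more self-contained and makes explicit why the fixed-parameter computation reorganizes into a $j$-dependent one. Your derivation of \eqref{eq:GJMSInTermsOfBS} from \eqref{algo} via the residue formula \eqref{Res-SO} coincides with the paper's.
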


\begin{proof} In \eqref{eq:TheDistribution}, we replace the eigenfunction $u$ by its
asymptotic expansion
$$
   \sum_{j\ge 0} r^{\nu +2j} a_{2j}(h;\nu) + \sum_{j\ge 0} r^{n-\nu+2j}
   b_{2j}(h;\nu)
$$
(see \eqref{ansatz}). In order to simplify the following equations, we shall
suppress the second sum. Then Theorem \ref{CurvedBS} implies
\begin{equation*}
   S(g_+;\lambda) \left(\sum_{j\geq 0}r^{\lambda+\nu-n+2j+1} a_{2j}(h;\nu)
   \right) =(\lambda\!+\!\nu\!-\!n\!+\!1)(\nu\!-\!\lambda\!-\!1)
   \left(\sum_{j \geq 0} r^{\lambda+\nu-n+2j} a_{2j}(h;\lambda)\right).
\end{equation*}
By Lemma \ref{GeneralSL(2)}, this relation is equivalent to
\begin{equation*}
   \sum_{j\geq 0} r^{\lambda+\nu-n+2j+1} S(g_+;n\!-\!\nu\!-\!2j\!-\!1) a_{2j}(h;\nu)
   = -\sum_{j\geq 0} 2j(2\nu\!-\!n\!+\!2j) r^{\lambda+\nu-n+2j} a_{2j}(h;\nu).
\end{equation*}
We rewrite this identity in terms of the expansion \eqref{eq:ExpansionOfP} and
compare coefficients of powers of $r$. This gives
\begin{multline*}
   S^{(2j-2)}(h;n\!-\!\nu\!-\!1) a_0(h;\nu) + \cdots + S^{(0)}(h;n\!-\!\nu\!-\!2j\!+\!1) a_{2j-2}(h;\nu)\\
   =-2j(2\nu\!-\!n\!+\!2j) a_{2j}(h;\nu)
\end{multline*}
for $j\geq 1$. Using $a_{2k}(h;\nu) = \T_{2k}(h;\nu) a_0(h;\nu)$, we obtain
\begin{equation*}
   -2j(2\nu\!-\!n\!+\!2j) \T_{2j}(h;\nu)
   = \sum_{k=0}^{j-1} S^{(2j-2k-2)}(h;n\!-\!\nu\!-\!2k\!-\!1) \T_{2k}(h;\nu).
\end{equation*}
Note that $\T_{2j}(h;\nu)$ has a simple pole at $\nu=\frac n2-j$ with residue
given by $P_{2j}(h)$ (see \eqref{Res-SO}). Thus the last display implies
\begin{equation*}
   P_{2j}(h) = -2^{2j-2}((j\!-\!1)!)^2 \sum_{k=0}^{j-1} S^{(2j-2k-2)}
   \left(h;\frac{n}{2}\!+\!j\!-\!2k\!-\!1\right) \T_{2k}\left(h;\frac{n}{2}\!-\!j\right).
\end{equation*}
The proof is complete.
\end{proof}

Proposition \ref{GJMSVsBS} is a compressed version of the usual algorithm for
the calculation of the solution operators. We shall illustrate it by low-order
examples in Section \ref{panorama}.

\subsection{Holographic formulas for $Q$-curvatures}\label{Q}

In the present section, we assume that $n$ is even. We shall discuss new
holographic formulas for the $Q$-curvatures $Q_{2N}(h)$ for $2N \le n$.

We start with a simple proof of a result which is also of independent interest
(see \cite[Theorem 1.6.6]{BJ}). It has been useful in connection with a
discussion of the recursive structure of $Q$-curvatures \cite{J4}.

\begin{prop}\label{vanish} Assume that $n$ is even and let $N \in \N$ with $2N \le n$.
Then
$$
   D_{2N}^{res}(h;0)(1) = 0.
$$
\end{prop}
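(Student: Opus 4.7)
The plan is to combine Corollary \ref{RFvsSF} with a direct computation of $S(g_+;n-1)(1)$.

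First, I would use Corollary \ref{RFvsSF} to rewrite
\begin{equation*}
   D_{2N}^{res}(h;0) = \frac{1}{(-2N)_N \bigl(\tfrac{n+1}{2}-2N\bigr)_N}\,\iota^* S_{2N}(g_+;n-2N),
\end{equation*}
and check that the Pochhammer symbols in the denominator are nonzero under the standing assumptions ($n$ even, $2N\le n$): $(-2N)_N$ is a nonzero integer, and $\bigl(\tfrac{n+1}{2}-2N\bigr)_N$ is a product of nonzero half-integers since $n$ is even. Thus it suffices to prove $\iota^* S_{2N}(g_+;n-2N)(1)=0$, and in fact I will show the stronger statement $S_{2N}(g_+;n-2N)(1)=0$.

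Unwinding the definition \eqref{eq:ThePpolynomial},
\begin{equation*}
    S_{2N}(g_+;n-2N) = S(g_+;n-2N) \circ S(g_+;n-2N+1) \circ \cdots \circ S(g_+;n-1),
\end{equation*}
so the innermost (rightmost) factor acting on $1$ is $S(g_+;n-1)$. From the defining formula \eqref{eq:BSOperators},
\begin{equation*}
    S(g_+;\lambda)(1) = -\tfrac{1}{2}(\lambda-n+1)\tr(h_r^{-1}\dot{h}_r),
\end{equation*}
since $\Delta_{\bar{g}}(1)=0$ and $\partial_r(1)=0$. Specializing at $\lambda=n-1$ gives $S(g_+;n-1)(1)=0$, hence the entire composition annihilates $1$.

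Therefore $D_{2N}^{res}(h;0)(1)=0$. There is no real obstacle here beyond spotting that the special value $\lambda=n-1$ kills the only surviving term of $S(g_+;\lambda)(1)$, and that this $\lambda$ is exactly the last parameter in the shift composition arising for $D_{2N}^{res}(h;0)$ via Corollary \ref{RFvsSF}. Equivalently, one can phrase the argument using \eqref{shift-op-2}, writing $S(g_+;\lambda)(1) = -(\lambda-n+1)\dot{v}/v$, which makes the vanishing at $\lambda=n-1$ even more transparent and incidentally shows that the same argument yields the factorization $D_{2N}^{res}(h;-1)(1) = c\cdot \iota^*(\dot v/v)$ for an explicit constant.
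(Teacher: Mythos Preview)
Your proof is correct and essentially identical to the paper's: both invoke Corollary \ref{RFvsSF}, note that the rightmost factor in the composition $S_{2N}(g_+;n-2N)$ is $S(g_+;n-1)$, and compute $S(g_+;n-1)(1)=0$ directly from \eqref{eq:BSOperators}. Your closing parenthetical is not quite right, though: at $\lambda=-1$ the argument gives $D_{2N}^{res}(h;-1)(1) = c\cdot \iota^* S_{2N-1}(g_+;n-2N-1)(\dot v/v)$, not simply $c\cdot\iota^*(\dot v/v)$.
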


\begin{proof} Corollary \ref{RFvsSF} implies that
$$
   D_{2N}^{res}(h;0)(1) = \frac{1}{(-2N)_N (\frac{n+1}{2}-2N)_N} \S_{2N}(h;n-2N)(1).
$$
By definition, we have
$$
   S_{2N}(g_+;n-2N) = S(g_+;n-2N) \circ \dots \circ S(g_+;n-1).
$$
But \eqref{eq:BSOperators} shows that
\begin{equation}\label{S-vanish}
   S(g_+;n-1)(1) = 0.
\end{equation}
Hence $\S_{2N}(h;n-2N)(1)=0$. This completes the proof.
\end{proof}

The polynomial $\lambda \mapsto D_{2N}^{res}(h;\lambda)(1)$ is called the
$Q$-curvature polynomial.

The $Q$-curvature polynomial $D_{2N}^{res}(h;\lambda)(1)$ also vanishes in odd
dimensions $n$. The above arguments prove this fact if $(\frac{n+1}{2}-2N)_N
\ne 0$.\footnote{In the remaining cases, the polynomial $\S_{2N}(h;\lambda)(1)$
has a double zero at $\lambda = n-2N$.}

In the critical case $2N=n$, Proposition \ref{vanish} states that
$D_{n}^{res}(h;0)(1) = 0$. Of course, this result also follows from
$D_n^{res}(h;0) = P_n(h) \iota^*$ (see \eqref{RF-Fact}) and $P_n(h)(1)=0$.

We continue with the discussion of the critical $Q$-curvature $Q_n(h)$ and
recall the holographic formula \cite[Theorem 6.6.1]{J1}
\begin{equation}\label{Q-holo}
   Q_n(h) = -(-1)^{\frac{n}{2}} \dot{D}_n^{res}(h;0)(1).
\end{equation}
The following result is a consequence of this identity.

\begin{theorem}[\bf Holographic formula for critical $Q$-curvature]\label{Q-holo-new}
Let $n$ be even. Then
\begin{equation}\label{Q-D}
   Q_n(h) = (-1)^\frac{n}{2} D_{n-1}^{res}(h;-1) \partial_r (\log v)
\end{equation}
or, equivalently,
\begin{equation}\label{Q-S}
   Q_n(h) = c_n \S_{n-1}(h;0) \partial_r (\log v)
\end{equation}
with $c_n = (-1)^{\frac{n}{2}} 2^{n-2} (\Gamma(\frac{n}{2})/\Gamma(n))^2$.
\end{theorem}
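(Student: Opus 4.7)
The plan is to derive Theorem \ref{Q-holo-new} from the existing holographic formula \eqref{Q-holo} combined with the first-order factorization of residue families proved in Corollary \ref{Factor-residue}. Specializing $2N = n$ in that factorization gives
\begin{equation*}
   D_n^{res}(h;\lambda) = D_{n-1}^{res}(h;\lambda-1) \circ S(g_+;\lambda+n-1),
\end{equation*}
so the key input is the value of $S(g_+;\lambda+n-1)(1)$. Applying the definition \eqref{eq:BSOperators} directly to the constant function, the first two terms drop and the identity \eqref{v-trace} gives $S(g_+;\mu)(1) = -(\mu-n+1)\partial_r(\log v)$; with $\mu = \lambda+n-1$ this specializes to $S(g_+;\lambda+n-1)(1) = -\lambda\, \partial_r(\log v)$.

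Substituting into the factorization, I would obtain the one-line relation
\begin{equation*}
   D_n^{res}(h;\lambda)(1) = -\lambda\, D_{n-1}^{res}(h;\lambda-1) \partial_r(\log v).
\end{equation*}
The explicit factor $-\lambda$ makes the differentiation at $\lambda=0$ trivial: the derivative of $D_{n-1}^{res}(h;\lambda-1) \partial_r(\log v)$ is multiplied by $-\lambda$ and vanishes at $\lambda=0$, so only the derivative of the prefactor $-\lambda$ contributes, yielding $\dot D_n^{res}(h;0)(1) = -D_{n-1}^{res}(h;-1)\partial_r(\log v)$. Plugging this into \eqref{Q-holo} immediately produces the first form \eqref{Q-D}. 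As a consistency check, one recovers the known identity $D_n^{res}(h;0)(1) = 0$ from Proposition \ref{vanish}.

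For the equivalent form \eqref{Q-S}, I would use Corollary \ref{RFvsSF} to pass from $D_{n-1}^{res}(h;-1)$ to $\S_{n-1}(h;0)$. With $2N+1 = n-1$ and $\lambda = -1$, the shift parameter $\lambda+n-2N-1$ in \eqref{ResFam-Shift-odd} becomes $0$, so this identity expresses $D_{n-1}^{res}(h;-1)$ as $\S_{n-1}(h;0)$ divided by $2(1-n)_{n/2}(-(n-1)/2)_{n/2}$. The only real work is bookkeeping with Pochhammer symbols: using $(1-n)_{n/2} = (-1)^{n/2}(n-1)!/(n/2-1)!$ and $(-(n-1)/2)_{n/2} = (-1)^{n/2} n!/(2^n (n/2)!)$ (the latter via $(n-1)!! = n!/(2^{n/2}(n/2)!)$) and simplifying with $\Gamma(n/2+1) = (n/2)\Gamma(n/2)$ and $\Gamma(n+1) = n\Gamma(n)$, the proportionality constant collapses to $2^{n-2}(\Gamma(n/2)/\Gamma(n))^2$, and the two sign factors $(-1)^{n/2}$ combine to $+1$. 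Multiplying by the $(-1)^{n/2}$ from \eqref{Q-D} exactly matches the asserted $c_n$. There is no serious geometric obstacle beyond \eqref{Q-holo}; the whole argument is a three-line chain of the factorization identities proved earlier together with one routine Gamma-function simplification.
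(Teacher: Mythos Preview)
Your proof is correct and follows essentially the same route as the paper: both start from the factorization $D_n^{res}(h;\lambda) = D_{n-1}^{res}(h;\lambda-1)\, S(g_+;\lambda+n-1)$ from Corollary \ref{Factor-residue}, differentiate at $\lambda=0$, and then invoke \eqref{Q-holo} and Corollary \ref{RFvsSF}. The only cosmetic difference is that you compute $S(g_+;\lambda+n-1)(1) = -\lambda\,\partial_r(\log v)$ directly from the definition and then differentiate, whereas the paper first notes $S(g_+;n-1)(1)=0$, differentiates, and then evaluates $\dot S(g_+;n-1)(1)$ via Proposition \ref{LeadingTermForBSOperator}; the content is identical.
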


We recall that the composition $\S_{n-1}(h;0) = \iota^* S_{n-1}(g_+;0)$ only
depends on $h$.

\begin{proof} The critical special case $2N=n$ of the first factorization identity in
Corollary \ref{Factor-residue} reads
$$
   D_n^{res}(h;\lambda) = D_{n-1}^{res}(h;\lambda-1) S(g_+;\lambda+n-1).
$$
Now we differentiate this relation at $\lambda=0$ and use the vanishing result
\eqref{S-vanish}. We obtain
$$
   \dot{D}_n^{res}(h;0)(1) = D_{n-1}^{res}(h;-1) \dot{S}(g_+;n-1)(1)
$$
But Proposition \ref{LeadingTermForBSOperator} shows that $\dot{S}(g_+;n-1) =
-2 w^{-1} \partial_r (w \cdot)$. Hence $\dot{S}(g_+;n-1)(1) = -2 w^{-1}
\partial_r (w) = - 2 \partial_r (\log w) = -\partial_r (\log v)$. The relation \eqref{Q-D}
follows by combining these facts with the holographic formula \eqref{Q-holo}.
The second relation \eqref{Q-S} follows by combining the first relation with
Corollary \ref{RFvsSF}.
\end{proof}

\begin{remark} Formula \eqref{Q-S} should be compared with the special case
\begin{equation}\label{GW-Q}
   ((n\!-\!1)!!)^2 Q_n(h) = \iota^* \ID_{n-1}[-n\!+\!1] \circ \ID_L[1] \log (1)
\end{equation}
of \cite[Theorem 4.7]{GW}, where we set $\ID[\lambda] = \ID[\bar{g},r;\lambda]$ and
likewise for $\ID_L$. Here the factor $\ID_L$ is defined to act on the log density
$\log(\mu)$ ($\mu$ any positive smooth function) according to
$$
  \ID_L[g,\sigma;\omega] \log (\mu) \st [-\sigma \Delta_g + (n\!-\!1) g(d\sigma,d\cdot)]
  \log(\mu) - \frac{\omega}{n\!+\!1} \left[(n\!-\!1) \Delta_g (\sigma) + 2n \sigma
  \J(g)\right]
$$
(see \cite[Section 2]{GW}). For $g = \bar{g}$, $\sigma = r$ and $\mu = 1$, we find
\begin{align}\label{ID-special}
   \ID_L[\bar{g},r;\omega] \log (1) &
   = - \frac{\omega}{n\!+\!1} \left[(n\!-\!1) \Delta_{\bar{g}}(r) + 2nr \J(\bar{g})\right] \notag \\
   & = - \frac{\omega}{n\!+\!1} \left[\frac{n\!-\!1}{2} \tr (h_r^{-1} \dot{h}_r) - n \tr (h_r^{-1}
   \dot{h}_r) \right] \notag \\
   & = \frac{\omega}{2} \tr (h_r^{-1} \dot{h}_r)
   = \omega \partial_r(\log v).
\end{align}
Hence the formula \eqref{GW-Q} reduces to \eqref{Q-S}, up to a sign due to
conventions. Now the transformation law $\mu^n Q_n(\mu^2 h) = P_n(h) \log (\mu) +
Q_n(h)$ and \eqref{identify} imply
\begin{multline*}
   ((n\!-\!1)!!)^2 \mu^n Q_n(\mu^2 h) \\ = \iota^* \ID_{n-1}[-n\!+\!1] \circ \ID[0] \log(\mu)
   + \iota^* \ID_{n-1}[-n\!+\!1] \circ \ID_L[1] \log (1).
\end{multline*}
But the relation
$$
   \ID_L[1] a - \ID_L[1] b = \ID[0](a-b)
$$
yields
$$
   \ID[0] \log(\mu) + \ID_L[1] \log (1) = \ID_L[1] \log(\mu).
$$
Hence
$$
   ((n\!-\!1)!!)^2 \mu^n Q_n(\mu^2 h) = \iota^* \ID_{n-1}[-n\!+\!1] \ID_L[1] \log(\mu).
$$
This proves \cite[Theorem 4.7]{GW}.\footnote{Strictly speaking, $\mu$ on the
right-hand side is a function on $M_+$ and the formula for $Q_n$ does only depend on
the restriction of $\mu$ to the hypersurface $M$.}
\end{remark}

The formula \eqref{Q-S} resembles the well-known beautiful formula
$$
    Q_n(h) = (-1)^{\frac{n}{2}-1} {\bf \Delta}^{\frac{n}{2}}(\log t)|_{\rho=0, t=1}
$$
of Fefferman and Hirachi \cite{FH}. Here ${\bf \Delta}$ denotes the Laplacian
of the ambient metric in normal form relative to $h$, and $t$ is the
homogeneous coordinate on the ambient space $\R^+ \times M \times
(-\varepsilon,\varepsilon)$ with coordinates $(t,x,\rho)$.

Next, we establish a generalization of Theorem \ref{Q-holo-new} to all
subcritical $Q$-curvatures.

\begin{theorem}[\bf Holographic formula for subcritical $Q$-curvatures]\label{Q-holo-g}
Let $n$ be even and assume that $2N < n$. Then
\begin{equation}\label{QHG}
    Q_{2N}(h) = c_{2N} \S_{2N-1}\left(h;\frac{n}{2}-N\right) \partial_r (\log v),
\end{equation}
where $c_{2N} = (-1)^{N} 2^{2N-2} (\Gamma(N)/\Gamma(2N))^2$. Equivalently,
\begin{equation}\label{QHDG}
    Q_{2N}(h) = (-1)^N D_{2N-1}^{res}\left(h;-\frac{n}{2}+N-1\right) \partial_r (\log v).
\end{equation}
\end{theorem}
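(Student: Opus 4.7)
The plan is to mimic the proof of Theorem \ref{Q-holo-new} almost verbatim, using the subcritical defining relation $P_{2N}(h)(1) = (-1)^N(\tfrac{n}{2}-N)Q_{2N}(h)$ in place of the holographic formula \eqref{Q-holo}. The starting point is the factorization identity in \eqref{RF-Fact},
\[
   D_{2N}^{res}\!\left(h;-\tfrac{n}{2}+N\right) = P_{2N}(h)\,\iota^*,
\]
so evaluating on the constant $1$ yields
\[
   D_{2N}^{res}\!\left(h;-\tfrac{n}{2}+N\right)(1) \;=\; (-1)^N\!\left(\tfrac{n}{2}-N\right) Q_{2N}(h).
\]

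Next I would apply the first factorization identity of Corollary \ref{Factor-residue},
\[
   D_{2N}^{res}(h;\lambda) = D_{2N-1}^{res}(h;\lambda-1)\,S(g_+;\lambda+n-1),
\]
at $\lambda = -\tfrac{n}{2}+N$ (the subcritical assumption $2N<n$ ensures we are not at the critical value where the proof of Theorem \ref{Q-holo-new} invoked differentiation). Here the argument of $S$ becomes $\tfrac{n}{2}+N-1$. Using the definition \eqref{eq:BSOperators} of the shift operator together with $\Delta_{\bar g}(1)=0$ and $\partial_r(1)=0$, one directly computes
\[
   S\!\left(g_+;\tfrac{n}{2}+N-1\right)(1) = -\tfrac{1}{2}\!\left(N-\tfrac{n}{2}\right)\tr(h_r^{-1}\dot h_r) = \left(\tfrac{n}{2}-N\right)\partial_r(\log v),
\]
where in the last step I use the identity \eqref{v-trace}.

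Combining the last three displays gives
\[
   (-1)^N\!\left(\tfrac{n}{2}-N\right) Q_{2N}(h) = \left(\tfrac{n}{2}-N\right) D_{2N-1}^{res}\!\left(h;-\tfrac{n}{2}+N-1\right)\partial_r(\log v),
\]
and dividing by the nonzero factor $\tfrac{n}{2}-N$ (possible precisely because $2N<n$) yields the formulation \eqref{QHDG}. Converting \eqref{QHDG} into the form \eqref{QHG} is then a routine application of Corollary \ref{RFvsSF}, which expresses $D_{2N-1}^{res}(h;\lambda)$ as a multiple of $\S_{2N-1}(h;\lambda+n-2N+1)$; at $\lambda = -\tfrac{n}{2}+N-1$ the second argument becomes $\tfrac{n}{2}-N$ as desired.

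The only real work is the bookkeeping of the constant. One must verify
\[
   \frac{(-1)^N}{2(-2N+1)_N \left(-N+\tfrac{1}{2}\right)_N} = c_{2N} = (-1)^N 2^{2N-2}\!\left(\frac{\Gamma(N)}{\Gamma(2N)}\right)^{\!2},
\]
which reduces via $(-2N+1)_N = (-1)^N\tfrac{(2N-1)!}{(N-1)!}$ and $(-N+\tfrac{1}{2})_N = (-1)^N\tfrac{(2N)!}{2^{2N}N!}$ to the elementary identity $\tfrac{2N!}{(2N)!} = \tfrac{(N-1)!}{(2N-1)!}$. No genuine obstacle arises; the subcriticality $2N<n$ is used exactly once, to invert $\tfrac{n}{2}-N$.
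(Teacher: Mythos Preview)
Your proof is correct and follows essentially the same approach as the paper: both start from $D_{2N}^{res}(h;-\tfrac{n}{2}+N)(1)=P_{2N}(h)(1)=(-1)^N(\tfrac{n}{2}-N)Q_{2N}(h)$, peel off the rightmost factor $S(g_+;\tfrac{n}{2}+N-1)$ and evaluate it on $1$, then cancel the nonzero factor $\tfrac{n}{2}-N$. The only cosmetic difference is the order of the two formulations: the paper first obtains \eqref{QHG} by working with $\S_{2N}$ via Corollary~\ref{RFvsSF} and then deduces \eqref{QHDG}, whereas you first obtain \eqref{QHDG} via Corollary~\ref{Factor-residue} and then convert to \eqref{QHG}; since Corollary~\ref{Factor-residue} is itself an immediate consequence of Corollary~\ref{RFvsSF}, the two routes are equivalent.
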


\begin{proof} On the one hand, we have
$$
   D_{2N}^{res}\left(h;-\frac{n}{2}+N\right)(1) = P_{2N}(h)(1) = (-1)^N \left(\frac{n}{2}-N\right)
   Q_{2N}(h)
$$
using \eqref{Q-def} and \eqref{RF-Fact}. On the other hand, Corollary
\ref{RFvsSF} implies
$$
   D_{2N}^{res}\left(h;-\frac{n}{2}+N\right)(1) =
   \frac{1}{(-2N)_N(\frac{1}{2}-N)_N} \S_{2N}\left(h;\frac{n}{2}-N\right)(1).
$$
But
\begin{equation*}
   \S_{2N}\left(h;\frac{n}{2}-N\right) = \S_{2N-1}\left(h;\frac{n}{2}-N\right)
   S\left(g_+;\frac{n}{2}+N-1\right)
\end{equation*}
and
$$
   S\left(g_+;\frac{n}{2}+N-1\right)(1) = \left(\frac{n}{2}-N\right) \partial_r (\log v).
$$
By comparing both expressions for
$D_{2N}^{res}\left(h;-\frac{n}{2}+N\right)(1)$, we obtain
$$
   Q_{2N}(h) = (-1)^N \frac{1}{(-2N)_N(\frac{1}{2}-N)_N}
   \S_{2N-1}\left(h;\frac{n}{2}-N\right) \partial_r (\log v).
$$
Now simplification proves the first assertion. The second follows from this
result using the second relation in Corollary \ref{RFvsSF}.
\end{proof}

Theorem \ref{Q-holo-new} obviously follows from Theorem \ref{Q-holo-g} by
analytic continuation in the dimension $n$. However, the above proof avoids
this argument.

Finally, we prove that the formulas \eqref{Q-S} and \eqref{QHDG} are equivalent
to the well-known holographic formulas for $Q$-curvatures proved in \cite{GJ,
J-holo}. In fact, the definitions imply that \eqref{QHDG} is equivalent to
\begin{align*}
   (-1)^N Q_{2N} & = 2^{2N-2} (N-1)!^2 \\
   & \times \left( \sum_{j=0}^{2N-1} \T_j^*(\tfrac{n}{2}-N) v_0 + \cdots +
   \T_0^*(\tfrac{n}{2}-N) v_j\right) \frac{1}{(2N-1-j)!} \iota^* \partial_r^{2N-1-j}
   \left(\frac{\dot{v}}{v}\right).
\end{align*}
In the latter sum, the operator $\T_{2k}^*(\tfrac{n}{2}-N)$ acts on
$$
   \left(v_0 \frac{1}{(2N-1-2k)!} \iota^* \partial_r^{2N-1-2k} + \cdots + v_{2N-2k-2} \iota^* \partial_r\right)
   \left(\frac{\dot{v}}{v}\right).
$$
But this sum equals the $(2N-1-2k)^{th}$ Taylor coefficient of $v (\dot{v}/v) =
\dot{v}$, i.e., equals $(2N-2k) v_{2N-2k}$. Hence the above formula simplifies
to
$$
   (-1)^N Q_{2N} = 2^{2N-2} (N-1)!^2 \sum_{k=0}^{N-1} (2N-2k) \T_{2k}^*(\tfrac{n}{2}-N) (v_{2N-2k}).
$$
This formula is equivalent to \cite[Theorem 1.1]{J-holo}. The same arguments
also apply in the critical case. This completes the proof.

In summary, the above discussion provides reformulations and easy new proofs of
well-known holographic formulas for $Q$-curvatures (in even dimension). The key
arguments here are the second identity in Corollary \ref{RFvsSF} and/or the
first identity in Corollary \ref{Factor-residue}.

\section{A panorama of examples}\label{panorama}

In the present section, we illustrate the main results (Theorem \ref{DeltaN},
Theorem \ref{BigGJMS}, Theorem \ref{BSOperatorVsGJMS}, Proposition
\ref{GJMSVsBS} and Theorem \ref{Q-holo-new}) by low-order examples.
Furthermore, we discuss certain remarkable expansion of the families
$S_N(g_+;\lambda)$ with respect to the parameter $r$.

We shall often simplify notation by omitting the obvious metrics. In
particular, we shall write $\Delta$ for $\Delta_h$, $\J$ for $\J(h)$, $\Rho$
for $\Rho(h)$, $S(\lambda)$ for $S(g_+;\lambda)$ etc.

We also recall the expansion (\cite[Section 6.11]{J1})
\begin{equation}\label{eq:r-Expansions}
    \Delta_{\bar{g}} f = [\Delta f + \partial_r^2 f] -r \J \partial_r f + r^2 \left[-\delta (\Rho \#d f)
    -\frac{1}{2} (d\J,df)\right] + O(r^3).
\end{equation}
By $\dot{v}/v = 2r v_2 + O(r^3)$ with $v_2 = -\frac{1}{2} \J(h)$, we have
\begin{equation*}
    S(g_+;\lambda) = -(2\lambda\!-\!n\!+\!1) \partial_r + r[\Delta_h + \partial_r^2 +
   (\lambda\!-\!n\!+\!1) \J(h))] + O(r^2).
\end{equation*}

\subsection{Theorem \ref{DeltaN} for $N \le 3$}

The first-order family $\delta_1(\lambda)$ is given by (\cite[Section 6.2]{J1})
\begin{equation*}
   \delta_1(\lambda) = \iota^*\partial_r.
\end{equation*}
 On the other hand, we have
\begin{equation}\label{S1}
   \S_1(\lambda) = \iota^* S_1(\lambda) = -(2\lambda\!-\!n\!+\!1) \iota^*\partial_r
   = -(2\lambda\!-\!n\!+\!1) \delta_1(\lambda).
\end{equation}
This confirms Theorem \ref{DeltaN} in the case $N=1$.

The second-order family $\delta_2(\lambda)$ is given by \cite[Section 6.7]{J1}
\begin{equation*}
   \delta_2(\lambda) = \frac{1}{2} \iota^* \partial_r^2
   + \frac{1}{2(n\!-\!2\!-\!2\lambda)}(\Delta + (\lambda\!-\!n\!+\!2)\J)\iota^*.
\end{equation*}
On the other hand, by definition, we have
\begin{align*}
   \S_2(\lambda ) = \iota^* S_2(\lambda) & =\iota^* S(\lambda) S(\lambda+1)\\
   & = -(2\lambda\!-\!n\!+\!1)\iota^* \partial_r[r\Delta_{\bar{g}} + (-2\lambda\!+\!n\!-\!3)\partial_r
   + (\lambda\!-\!n\!+\!2)r \J].
\end{align*}
By \eqref{eq:r-Expansions}, this formula simplifies to
\begin{align}\label{S2}
   \S_2(\lambda) & = -(2\lambda\!-\!n\!+\!1)[\Delta \iota^* - (2\lambda\!-\!n\!+\!2) \iota^*\partial^2_r
   + (\lambda\!-\!n\!+\!2) \J \iota^*] \notag \\
   & = (-2)_2(2\lambda\!-\!n\!+\!1)_2 \left[\frac{1}{2} \iota^* \partial^2_r
   - \frac{1}{2(2\lambda\!-\!n\!+\!2)}(\Delta \iota^* + (\lambda\!-\!n\!+\!2) \J \iota^*) \right]\\
   & = (-2)_2 (2\lambda\!-\!n\!+\!1)_2 \delta_2(\lambda). \notag
\end{align}
This identity confirms Theorem \ref{DeltaN} for $N=2$.

Note that, for $n=3$, the latter formula gives
$$
   \S_2(\lambda) (1) = - (-2)_2 (2\lambda-2)_2 \frac{(\lambda-1)}{2(2\lambda-1)} \J = - 2(\lambda-1)^2 \J.
$$
This confirms the double zero mentioned after Proposition \ref{vanish}. In this
case, both factors in the definition of $\S_2(\lambda)$ contribute a zero.

The third-order family $\delta_3(\lambda)$ is given by (\cite[Section 6.8]{J1})
\begin{equation*}
   \delta_3(\lambda) =
   \frac{1}{6} \iota^*\partial_r^3 + \frac{1}{2(n\!-\!2\!-\!2\lambda)}
   (\Delta +(\lambda\!-\!n\!+\!2)\J) \iota^*\partial_r.
\end{equation*}
On the other hand, by definition, we have
\begin{align*}
   \S_3(\lambda) = \iota^* S_3(\lambda) & = \iota^* S(\lambda) S(\lambda+1) S(\lambda+2)\\
   & =-(2\lambda\!-\!n\!+\!1)\iota^*\partial_r [r\Delta_{\bar{g}}-(2\lambda\!-\!n\!+\!3)\partial_r
   +(\lambda\!-\!n\!+\!2) r\J] \\
   & \quad \circ [r\Delta_{\bar{g}} - (\lambda\!-\!n\!+\!5)\partial_r + (\lambda\!-\!n\!+\!3)r\J].
\end{align*}
By \eqref{eq:r-Expansions}, the last display simplifies to
\begin{align}\label{S3}
   & -(2\lambda\!-\!n\!+\!1) \bigg[(-(\lambda\!-\!n\!+\!5) + 2 - 2(2\lambda\!-\!n\!+\!3)) \Delta \iota^* \partial_r \notag \\
   & +(2\!-\!(2\lambda\!-\!n\!+\!5) - 2(2\lambda\!-\!n\!+\!3) + (2\lambda\!-\!n\!+\!3)(2\lambda\!-\!n\!+\!5)) \iota^*\partial_r^3 \notag \\
   & +(-2\!+\!2(\lambda\!-\!n\!+\!3) + 2(2\lambda\!-\!n\!+\!3) - 2(2\lambda\!-\!n\!+\!3)(\lambda\!-\!n\!+\!3) \notag \\
   & -(\lambda\!-\!n\!+\!2)(2\lambda\!-\!n\!+\!5)) \J \iota^*\partial_r \bigg] \notag \\
   & =(-3)_3(2\lambda\!-\!n\!+\!1)_3 \left[\frac{1}{6} \iota^*\partial_r^3 - \frac{1}{2(2\lambda\!-\!n\!+\!2)}
   \big[\Delta \iota^*\partial_r + (\lambda\!-\!n\!+\!2) \J \iota^*\partial_r\big]\right] \notag \\
   & =(-3)_3 (2\lambda\!-\!n\!+\!1)_3 \delta_3(\lambda).
\end{align}
This identity confirms Theorem \ref{DeltaN} for $N=3$.

\subsection{Theorem \ref{BigGJMS} for $N = 1$}

We have shown that the family $S_N(g_+;\lambda)$ contains the GJMS operator
$P_{2N}(\bar{g})$ if $g_+$ is an Einstein metric on $M_+^\circ$. Now we give a
direct proof for the first example. We recall that $m=\frac{n+1}{2}$. By
formula \eqref{Jbar}, we compute
\begin{equation*}
   S_1(g_+;\m-1) = S(g_+;\m-1) = r \left(\Delta_{\bar{g}}-\frac{n-1}{2}\J(\bar{g})\right)
\end{equation*}
if $g_+$ is Einstein. By \eqref{Yamabe}, the right-hand side coincides with $r
P_2(\bar{g})$. For general Poincar\'e metrics, the identity holds with error
terms (see Remark \ref{AE}).

\subsection{Theorem \ref{BSOperatorVsGJMS} for $N=1$}

It shows that the family $S_{2N}(g_+;\lambda)$ induces a tangential operator
which is proportional to the GJMS operators for $(M,h)$. Now we compute
\begin{align*}
   \iota^* S_2\left(\frac{n}{2}-1\right) & = \iota^* S \left(\frac{n}{2}-1\right) S\left(\frac{n}{2}\right)\\
   & = \iota^*\partial_r \left[-\partial_r + r \left[\Delta + \partial_r^2 - \left(\frac n2-1\right) \J\right]\right]\\
   & = \Delta\iota^*-\left(\frac{n-2}{2}\right) \J \iota^*,
\end{align*}
which coincide with $P_2 \iota^*$ (see \eqref{Yamabe}).

Note also that
\begin{equation*}
   \iota^*S_2 \left(g_+;\frac{n\!-\!1}{2}\!-\!1\right) = 2
   \iota^* \partial_r \left[r \Delta_{\bar{g}} - \left(\frac{n-1}{2}\right) r\J \right]
   = 2 \iota^* P_2(\bar{g})
\end{equation*}
and
\begin{equation*}
   \iota^* S_3\left(g_+;\frac{n\!-\!3}{2}\!-\!1\right) = 4! \iota^* \partial_r P_2(\bar{g})
\end{equation*}
by \eqref{S2} and \eqref{S3} (for Einstein $g_+$ see also
\eqref{eq:RExpansion2} and \eqref{S3-rep}). The latter two identities are
special cases of \eqref{Seven-special} and \eqref{Sodd-special}.

\subsection{Proposition \ref{GJMSVsBS} for $N \le 2$}

We recall that (\cite[Section $6.7$]{J1})
\begin{equation}\label{eq:SolutionOperator2}
    \T_2(\lambda) = \frac{1}{2(n\!-\!2\lambda\!-\!2)}[\Delta - \lambda \J].
\end{equation}
By \eqref{eq:ExpansionOfP}, we have
\begin{equation*}
    S^{(0)}(n\!-\!\lambda\!-\!1) = \Delta + \partial_r^2 - \lambda \J.
\end{equation*}
Hence we obtain
\begin{equation*}
    -2(2\lambda\!-\!n\!+\!2) \T_2(\lambda) = S^{(0)}(n\!-\!\lambda\!-\!1) \T_0(\lambda)
\end{equation*}
as an identity of operators on $C^\infty(M)$. This coincides with \eqref{algo}
for $N=1$.

We recall that (\cite[Theorem $6.9.4$]{J1})
\begin{equation*}
   \T_4(\lambda) = \frac{1}{4(n\!-\!4\!-\!2\lambda)}
   \bigg[\frac{1}{2(n\!-\!2\!-\!2\lambda)}[\Delta - (\lambda\!+\!2) \J][\Delta\!-\!\lambda \J]
   -\frac 12 \lambda \abs{\Rho}^2 - \delta (\Rho \#d) - \frac{1}{2}(d \J,d) \bigg].
\end{equation*}
Now, using \eqref{eq:PCoefficients} and \eqref{eq:SolutionOperator2}, we
compute
\begin{multline*}
   S^{(2)}(n\!-\!\lambda\!-\!1) \T_0(\lambda) + S^{(0)}(n\!-\!\lambda\!-\!3) \T_2(\lambda)\\
   = -\delta (\Rho \# d) - \frac 12 (d\J,d) - \frac{1}{2} \abs{\Rho}^2 + \frac{1}{2(n\!-\!2\lambda\!-\!2)}
   [\Delta - (\lambda\!+\!2) \J][\Delta - \lambda \J].
\end{multline*}
Hence
\begin{equation*}
   -4(2\lambda\!-\!n\!+\!4) \T_4(\lambda) = S^{(2)}(n\!-\!\lambda\!-\!1) \T_0(\lambda)
   + S^{(0)}(n\!-\!\lambda\!-\!3)\T_2(\lambda).
\end{equation*}
This coincides with \eqref{algo} for $N=2$.

\subsection{Theorem \ref{Q-holo-new} for $n \le 4$ and Theorem \ref{Q-holo-g} for $N \le 2$}\label{Q-ex}

We first confirm \eqref{Q-S} for $n=2$ and $n=4$. As a preparation, we note
that
$$
  \partial_r (\log v) = \dot{v}(r)/v(r) = 2 r v_2 + r^3 (4 v_4 - 2 v_2^2) + \cdots.
$$
For $n=2$, \eqref{Q-S} claims that
$$
   Q_2 = - \S_1(0) \partial_r (\log v).
$$
Now, using \eqref{S1}, this identity simplifies to
$$
   Q_2 = -2 v_2.
$$
For $n=4$, \eqref{Q-S} claims that
$$
   Q_4 = \frac{4}{36} \S_3(0) \partial_r (\log v).
$$
Using \eqref{S3}, this formula reads
$$
   Q_4 = 4 \left[\frac{1}{6} \iota^*\partial_r^3 + \frac{1}{4}
   (\Delta \iota^* \partial_r - 2\J \iota^* \partial_r )\right] \partial_r (\log v).
$$
The latter expression simplifies to
$$
   16 v_4 - 8v_2^2 + 2(\Delta - 2\J) v_2.
$$
Now the standard formulas
$$
   v_2 = -\frac{1}{2} \J \quad \mbox{and} \quad v_4 = \frac{1}{8} (\J^2 - |\Rho|^2)
$$
confirm that the above formulas are equivalent to the well-known expressions
$$
   Q_2 = \J \quad \mbox{and} \quad Q_4 = 2\J^2 - 2 |\Rho|^2 - \Delta \J.
$$
Similar calculations confirm \eqref{QHG} for $N=1$ and $N=2$. In fact, by
\eqref{S1}, the assertion
$$
   Q_2 = - \S_1\left(\frac{n}{2}-1\right) \partial_r (\log v)
$$
is equivalent $Q_2 = -2 v_2$. Similarly, by \eqref{S3}, the assertion
$$
   Q_4 = \frac{4}{36} \S_3\left(\frac{n}{2}-2\right) \partial_r(\log v)
$$
is equivalent to
$$
   Q_4 = 16 v_4 - 8 v_2^2 + 2 \Delta v_2 - n \J v_2 = \frac{n}{2} \J^2 - 2|\Rho|^2 - \Delta
   \J.
$$
These results reproduce the formulas in \eqref{sub-Q}.

\subsection{Some interesting expansion of $S_{N}(g_+;\lambda)$}\label{expansions}

We finish this section with a discussion of some interesting expansions of the
families $S_{N}(g_+;\lambda)$. Here we restrict to the low-order examples $N
\le 3$. In order to simplify the presentation, we shall also assume that $g_+$
is Einstein. The expansions in question describe the families
$S_N(g_+;\lambda)$ as polynomials in $r$ with coefficients that are polynomials
in the GJMS operators $P_{2k}(\bar{g})$ for $k\leq N$ and the operators
$\partial_r^w = w^{-1}\partial_r(w\cdot)$. The existence of such expansions
follows from Theorem \ref{BigGJMS}, Corollary \ref{IdentitiesForBSOperator} and
Proposition \ref{LeadingTermForBSOperator}. Of particular interest will be the
coefficient of $r^N$ in the expansion of $S_{N}(h;\lambda)$. This coefficient
is a polynomial in $\lambda$ the leading coefficient of which is related to the
second-order operators $\M_2(\bar{g})$, $\M_4(\bar{g})$ and $\M_6(\bar{g})$
(see \eqref{M4-bar}, \eqref{M6-bar}) associated to $(M_+,\bar{g})$. We recall
that these operators are the first coefficients in the $r$-expansion of the
holographic Laplacian introduced in \cite{J2}. These experiments provide
evidence for a general property of these expansions which will be formulated at
the end.

As in Theorem \ref{BigGJMS}, we shall use the notation $m=\frac{n+1}{2}$.

\begin{example} The second-order family $S_1(g_+;\lambda)$ is linear in the variable
$\lambda$ and satisfies two identities:
\begin{align*}
   S_1(g_+;m\!-\!1) & = r P_2(\bar{g}),\\
   \frac{d}{d \lambda} S_{1}(g_+;\lambda) & = -2 \partial_r^w.
\end{align*}
These identities imply the representation
\begin{equation}\label{eq:LambdaExpansion1}
    S_1(g_+;\lambda) = -2(\lambda\!-\!\m\!+\!1)\partial_r^w + r P_2(\bar{g}).
\end{equation}
In particular, the coefficient of $r$ is given by $\M_2(\bar{g}) =
P_2(\bar{g})$.
\end{example}

\begin{example} The fourth-order family $S_{2}(g_+;\lambda)$ is quadratic in the
variable $\lambda$ and satisfies three identities:
\begin{align*}
   S_2(g_+;\m\!-\!1) & = r^2 P_4(\bar{g}),\\
   S_2(g_+;\m\!-\!2) & = S_1(g_+;\m\!-\!2) r P_2(\bar{g}),\\
   \frac{1}{2!} \frac{d^2}{d \lambda^2}S_{2}(g_+;\lambda) & = 4(\partial_r^w)^2.
\end{align*}
Hence $S_2(g_+;\lambda)$ can be written in the form
\begin{multline}\label{eq:Ansatz2}
   S_{2}(g_+;\lambda) = 4 (\lambda\!-\!\m\!+\!1)_2 (\partial_r^w)^2 \\
   -(\lambda\!-\!\m\!+\!1) S_1(g_+;\m\!-\!2)r P_2(\bar{g})
   +(\lambda\!-\!\m\!+\!2)r^2 P_4(\bar{g}).
\end{multline}
Now applying Lemma \ref{CommShiftM} to the middle summand gives
\begin{equation*}
   S_1(g_+;\m\!-\!2)r P_2(\bar{g}) = r S_1(g_+;\m\!-\!3) P_2(\bar{g}) + 2P_2(\bar{g}).
\end{equation*}
Thus, by combination with \eqref{eq:LambdaExpansion1}, we can rewrite
\eqref{eq:Ansatz2} as
\begin{align}\label{eq:RExpansion2}
   S_{2}(g_+;\lambda) & = 4 (\lambda\!-\!\m\!+\!1)_2(\partial_r^w)^2 -
   2(\lambda\!-\!\m\!+\!1)P_2(\bar{g})\notag\\
   &-4(\lambda\!-\!\m\!+\!1) r \partial_r^w P_2(\bar{g})\notag\\
   &+r^2[(\lambda\!-\!\m\!+\!2)P_4(\bar{g}) - (\lambda\!-\!\m\!+\!1)P_2(\bar{g})^2].
\end{align}
In particular, the coefficient of $r^2$ is a linear polynomial in $\lambda$ the
leading coefficient of which is given by the operator
\begin{equation}\label{M4-bar}
   \M_4(\bar{g}) = P_4(\bar{g}) - P_2(\bar{g})^2.
\end{equation}
\end{example}

\begin{example} The sixth-order family $S_{3}(g_+;\lambda)$ is cubic in the variable
$\lambda$ and satisfies four identities:
\begin{align*}
   S_{3}(g_+;\m\!-\!1) & = r^3 P_6(\bar{g}),\\
   S_{3}(g_+;\m\!-\!2) & = S_1(g_+;m\!-\!2) r^2 P_4(\bar{g}),\\
   S_{3}(g_+;\m\!-\!3) & = S_2(g_+;\m\!-\!3) r P_2(\bar{g}),\\
   \frac{1}{3!}\frac{d^3}{d \lambda^3}S_{3}(g_+;\lambda) & = -8(\partial_r^w)^3.
\end{align*}
Hence $S_{3}(g_+;\lambda)$ can be represented in the form
\begin{align}\label{eq:Ansatz3}
   S_{3}(g_+;\lambda) & = -8(\lambda\!-\!\m\!+\!1)_3 (\partial_r^w)^3\notag\\
   & + \frac{1}{2} (\lambda\!-\!\m\!+\!1)_2 S_2(g_+;\m\!-\!3)r P_2(\bar{g})\notag\\
   & -(\lambda\!-\!\m\!+\!1)(\lambda\!-\!\m\!+\!3) S_1(g_+;\m\!-\!2) r^2 P_4(\bar{g})\notag\\
   & + \frac{1}{2} (\lambda\!-\!\m\!+\!2)_2 r^3 P_6(\bar{g}).
\end{align}
Applying Lemma \ref{CommShiftM} to the middle two summands yields the relations
\begin{align*}
   S_2(g_+;\m\!-\!3) (r P_2(\bar{g}) )& = r S_2(g_+;\m\!-\!4) P_2(\bar{g}) + 6 S_1(g_+;\m\!-\!3) P_2(\bar{g}),\\
   S_1(g_+;\m\!-\!2) (r^2 P_4(\bar{g})) & = r^2 S_1(g_+;\m\!-\!4)P_4(\bar{g}) + 6r P_4(\bar{g}).
\end{align*}
Hence, by combination with the respective representations
\eqref{eq:LambdaExpansion1} and \eqref{eq:RExpansion2} of $S_1(g_+;\lambda)$
and $S_2(g_+;\lambda)$, we can rewrite $S_3(g_+;\lambda)$ in the form
\begin{align}\label{S3-rep}
   & (2\lambda\!-\!n\!+\!1)(2\lambda\!-\!n\!+\!3) (3\partial_r^w P_2(\bar{g})
   -(2\lambda\!-\!n\!+\!5)(\partial_r^w)^3) \notag\\
   & -\frac{3}{2} (2\lambda\!-\!n\!+\!1)r \left[(2\lambda\!-\!n\!+\!5)P_4(\bar{g})
   -(2\lambda\!-\!n\!+\!3)P_2(\bar{g})^2
   -2(2\lambda\!-\!n+\!3)(\partial_r^w)^2P_2(\bar{g}) \right]\notag\\
   & -\frac{3}{2}(2\lambda\!-\!n\!+\!1) r^2 \left[(2\lambda\!-\!n\!+\!5)\partial_r^wP_4(\bar{g})-
   (2\lambda\!-\!n\!+\!3) \partial_r^w P_2(\bar{g})^2 \right]\notag\\
   & +\frac{1}{8} r^3 \bigg[ -2(2\lambda\!-\!n\!+\!1)(2\lambda\!-\!n\!+\!5)P_2(\bar{g})P_4(\bar{g})+
   (2\lambda\!-\!n\!+\!3)(2\lambda\!-\!n\!+\!5)P_6(\bar{g})\notag\\
   & \qquad \; \quad -2(2\lambda\!-\!n\!+\!1)(2\lambda\!-\!n\!+\!3)P_4(\bar{g})P_2(\bar{g})
   +3(2\lambda\!-\!n\!+\!1)(2\lambda\!-\!n\!+\!3)P_2(\bar{g})^3 \bigg].
\end{align}
In particular, the coefficient of $r^3$ is a quadratic polynomial in $\lambda$
the leading coefficient of which is a constant multiple of
\begin{equation}\label{M6-bar}
   \M_6(\bar{g}) = P_6(\bar{g})-2 P_2(\bar{g}) P_4(\bar{g})-2P_4(\bar{g})P_2(\bar{g})+ 3P_2(\bar{g})^3.
\end{equation}
\end{example}

The above examples suggest that in the analogous representation of
$S_N(g_+;\lambda)$ the coefficient of $r^N$ is a constant multiple of the
degree $N-1$ polynomial
\begin{equation}\label{leading-coeff}
    \sum_{|I|=N} m_I \frac{(\lambda-\frac{n-1}{2})_N}{\lambda-\frac{n-1}{2}+N-I_r} P_{2I}(\bar{g}).
\end{equation}
Here the sum runs over all partitions $I = (I_1,\dots,I_r)$ of size $|I|=N$
and, for any $I$, we set $P_{2I} = P_{2I_1} \cdots P_{2I_r}$; for more details
on the coefficients $m_I$ we refer to \cite{J2}. The expression
\eqref{leading-coeff} resembles the polynomials in \cite[Theorem 4.1]{J2} which
describes residue families in terms of GJMS operators. Note that the leading
coefficient of the polynomial \eqref{leading-coeff} is the building block
operator
$$
   \M_{2N}(\bar{g}) = \sum_{|I|=N} m_I P_{2I}(\bar{g})
$$
(see Section \ref{epi}). We will return to that problem in later work.

\section{Epilogue}\label{epi}

In the present section, we sketch some further developments and indicate some
interesting future developments.

We first prove a consequence of the expansions discussed in Section
\ref{expansions}. Then we generalize this result to arbitrary order. We show
that the result naturally follows from a basic property of an operator which in
\cite{J3} was termed the holographic Laplacian. We expect that these
generalizations play a similar role in the study of the families
$S_N(g_+;\lambda)$ of arbitrary order $N \in \N$.

In order to simplify the presentations as much as possible, we assume that $M$
is an analytic manifold of odd dimension $n$. For an analytic metric $h$ on
$M$, we let $g_+$ be a Poincar\'e metric in normal form relative to $h$. It
satisfies $\Ric(g_+) + n g_+ = 0$ and, for any $N \in \N$, there is a
well-defined GJMS operator $P_{2N}(h)$.

We also recall the notation $\partial_r^w = w^{-1}\partial_r(w\cdot)$ and
$$
   \M_2(h) = P_2(h), \quad \M_4(h) = P_4(h) - P_2(h)^2.
$$
As usual, let $\bar{g} = r^2 g_+$. The analogous definitions yield
$\M_2(\bar{g})$ and $\M_4(\bar{g})$.

\begin{theorem}\label{magic} $r \M_4(\bar{g}) = 2[\partial_r^w,\M_2(\bar{g})]$.
\end{theorem}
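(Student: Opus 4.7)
The plan is to compute $S_2(g_+;\lambda)$ in two ways and equate the resulting operator-valued polynomials in $\lambda$. On the one hand, the representation \eqref{eq:LambdaExpansion1} gives
$$S(g_+;\lambda) = -2(\lambda-m+1)\partial_r^w + rP_2(\bar{g}),$$
so the composition $S(g_+;\lambda)\circ S(g_+;\lambda+1)$ expands into four operator summands organized as a quadratic in $\lambda$. On the other hand, the representation \eqref{eq:RExpansion2} already exhibits $S_2(g_+;\lambda)$ as a quadratic polynomial in $\lambda$ whose $r^2$-coefficient involves $P_4(\bar{g})$ and $P_2(\bar{g})^2$. Since both polynomials represent the same operator, their coefficients must agree.

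The only algebraic fact needed to put the direct composition into normal form is the commutator $[\partial_r^w, r] = 1$, which follows at once from $\partial_r^w = w^{-1}\partial_r(w\cdot)$ and the Leibniz rule. Using this relation, I would rewrite $\partial_r^w \cdot rP_2(\bar{g})$ as $r\partial_r^w P_2(\bar{g}) + P_2(\bar{g})$. The remaining cross-term $rP_2(\bar{g})\cdot rP_2(\bar{g})$ in the composition produces an additional commutator $r[P_2(\bar{g}),r]P_2(\bar{g})$, but this term is independent of $\lambda$ and therefore does not enter the step below.

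Extracting the coefficient of $\lambda^1$ from each representation, the purely quadratic pieces in $(\partial_r^w)^2$ and the terms $-2P_2(\bar{g})$ agree trivially, and what survives reduces to
$$2r\,\partial_r^w P_2(\bar{g}) - 2r\,P_2(\bar{g})\,\partial_r^w = r^2\bigl(P_4(\bar{g}) - P_2(\bar{g})^2\bigr).$$
Dividing by $r$ and using $\M_2(\bar{g}) = P_2(\bar{g})$ together with $\M_4(\bar{g}) = P_4(\bar{g}) - P_2(\bar{g})^2$ yields the desired identity $r\M_4(\bar{g}) = 2[\partial_r^w, \M_2(\bar{g})]$.

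The main obstacle is bookkeeping: one must carefully track which factors of $r$ sit to the left or the right of $\partial_r^w$ and $P_2(\bar{g})$ as the composition is reorganized by powers of $\lambda$. Because only the $\lambda^1$-coefficient is used, the $\lambda^0$-only contributions (including the unresolved commutator $r[P_2(\bar{g}),r]P_2(\bar{g})$) may be discarded, and the identity emerges without needing any explicit curvature expansion of $P_2(\bar{g})$ or $P_4(\bar{g})$.
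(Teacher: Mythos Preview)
Your proposal is correct and follows essentially the same strategy as the paper: compute $S_2(g_+;\lambda)$ directly from the product $S_1(g_+;\lambda)S_1(g_+;\lambda+1)$ via \eqref{eq:LambdaExpansion1}, and compare to the representation \eqref{eq:RExpansion2}. The only tactical differences are that the paper commutes $r$ past $S_1$ using Lemma~\ref{GeneralSL(2)} (which absorbs your $[\partial_r^w,r]=1$ and the relation $[P_2(\bar g),r]=2\partial_r^w$ in one step) and then subtracts the two full expressions to obtain the factor $(\lambda-m+2)\bigl[r^2\M_4(\bar g)-2r[\partial_r^w,P_2(\bar g)]\bigr]$, whereas you isolate the $\lambda^1$-coefficient and discard the $\lambda^0$-only residual $r[P_2(\bar g),r]P_2(\bar g)$; both routes arrive at the same identity.
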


\begin{proof} Using \eqref{eq:LambdaExpansion1}, we compute
\begin{align*}
   S_2(g_+;\lambda) & = S_1(g_+;\lambda) S_1(g_+;\lambda+1) \\
   & = S_1(g_+;\lambda) \left[-2(\lambda\!-\!\m\!+\!2)\partial_r^w + r P_2(\bar{g})\right] \\
   & = -2(\lambda\!-\!\m\!+\!2) S_1(g_+;\lambda) \partial_r^w + S_1(g_+;\lambda) r P_2(\bar{g}).
\end{align*}
Now we apply Lemma \ref{GeneralSL(2)} to move the variable $r$ in the last term
to the left, i.e.,
\begin{equation*}
   S_2(g_+;\lambda) = -2(\lambda\!-\!\m\!+\!2) S_1(g_+;\lambda)\partial_r^w + r S_1(g_+;\lambda)
   P_2(\bar{g}) - 2(\lambda\!-\!\m\!+\!1) P_2(\bar{g})).
\end{equation*}
By another application of \eqref{eq:LambdaExpansion1}, we conclude
\begin{align*}
   S_2(g_+;\lambda) & = 2(\lambda\!-\!\m\!+\!2)[-2(\lambda\!-\!\m\!+\!1)(\partial_r^w)^2 + rP_2(\bar{g})\partial_r^w]\\
   & + r[-2(\lambda\!-\!\m) \partial_r^w P_2(\bar{g}) + P_2(\bar{g})^2 - 2(\lambda\!-\!\m\!+\!1)P_2(\bar{g})\\
   & = 4(\lambda\!-\!\m\!+\!1)_2 (\partial_r^w)^2 - 2(\lambda\!-\!\m\!+\!1) P_2(\bar{g})\\
   & -2r[(\lambda\!-\!\m\!+\!2) P_2(\bar{g}) \partial_r^w + (\lambda\!-\!\m) \partial_r^wP_2(\bar{g})] + r^2P_2(\bar{g})^2.
\end{align*}
The difference between \eqref{eq:RExpansion2} and the last display gives the
relation
\begin{equation*}
   0 = (\lambda\!-\!\m\!+\!2) \left[r^2 \M_4(\bar{g}) - 2r[\partial_r^w,P_2(\bar{g})]\right].
\end{equation*}
The proof is complete.
\end{proof}

Now we describe an alternative proof of the commutator relation in Theorem
\ref{magic}. The proof rests on a basic property of the {\em holographic
Laplacian} $\H(h)(r)$ introduced in \cite{J2}, \cite{J3}. We recall that this
operator is the Schr\"{o}dinger-type operator
\begin{equation}\label{sch}
   \H(h)(r) \st - \delta_h (h_r^{-1} d) + \U(h)(r)
\end{equation}
with the potential
\begin{equation}\label{potential}
   \U(h)(r) \st - w(r)^{-1} \left(\partial^2/\partial r^2 - (n-1) r^{-1}
   \partial/\partial r - \delta (h_r^{-1} d) \right) (w(r)).
\end{equation}
The operator $\H(h)(r)$ should be viewed as a $1$-parameter deformation of the
Yamabe operator $\H(h)(0) = P_2(h)$. It is a key fact (see \cite{FG-J},
\cite{J2}) that (the Taylor series of) this operator coincides with
$$
   \G(h)\left(\frac{r^2}{4}\right),
$$
where
\begin{equation*}
    \G(h)(\rho) \st \sum_{N \ge 1} \M_{2N}(h) \frac{\rho^{N-1}}{(N-1)!^2}
\end{equation*}
is a generating function of the so-called building block operators $\M_{2N}(h)$
of the GJMS operators of the metric $h$. In fact, any GJMS operator $P_{2N}(h)$
can be written as a linear combination
\begin{equation}\label{BB}
   P_{2N}(h) = \sum_{|I|=N} n_I \M_{2I}(h), \; n_I \in \Z
\end{equation}
of compositions $\M_{2I} = \M_{2I_1} \cdots \M_{2I_r}$ for $I=(I_1,\dots,I_r)$.
For the details we refer to \cite{J2}. Now we consider the generating function
$\G(\bar{g}(r))(\eta)$. It satisfies the basic relation
\begin{equation}\label{fund}
    \G(\bar{g}(r))(\eta) = w(r)^{-1} \G(h)\left(\frac{r^2}{4}+\eta\right) w(r) + (\partial_r^w)^2
\end{equation}
of second-order operators acting on functions in $(r,x)$. Note that the
operator in the first term on the right-hand side only differentiates along
$M$. The identity \eqref{fund} follows by combining the relation between
$\G(\bar{g})$ and the holographic Laplacian of $\bar{g}$ with an explicit
formula \cite[Theorem 7.2]{J2} for the Poincar\'e metric in normal form
relative to $\bar{g}$ in terms of the Poinacar\'e metric in normal form
relative to $h$ ; for the details we refer to \cite{J-bb}. By expansion into
powers of $\eta$, the relation \eqref{fund} implies the identities
\begin{equation}\label{Mbexp}
    \M_{2N}(\bar{g}(r)) =
    \sum_{k\geq N} w(r)^{-1} \M_{2k}(h) w(r) \frac{(N-1)!}{(k-1)!(k-N)!}\left(\frac{r^2}{4}\right)^{k-N}
\end{equation}
for $N \ge 2$. In turn, the latter relation yields the following commutator
relations.

\begin{theorem}\label{magic-2} Let $N\geq 2$. Then
\begin{equation}\label{eq:help1}
      r \M_{2N}(\bar{g}) = 2(N-1)[\partial_r^w,\M_{2N-2}(\bar{g})].
\end{equation}
\end{theorem}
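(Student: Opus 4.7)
The plan is to prove \eqref{eq:help1} by inserting the explicit expansion \eqref{Mbexp} into both sides and matching term by term. Write $A_k \st w^{-1}\M_{2k}(h) w$, so that for $N\geq 2$
$$\M_{2N}(\bar{g}) = \sum_{k\geq N} A_k \frac{(N-1)!}{(k-1)!(k-N)!}\left(\frac{r^2}{4}\right)^{k-N}.$$
The key observation is that $\partial_r^w$ commutes with each $A_k$: since $\M_{2k}(h)$ is a differential operator along $M$ whose coefficients are built from the boundary metric $h$ (independent of $r$), one has $[\partial_r,\M_{2k}(h)]=0$, and conjugation by $w$ preserves this, giving $[\partial_r^w,A_k]=w^{-1}[\partial_r,\M_{2k}(h)]w=0$. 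On pure functions of $r$ one has the elementary commutator $[\partial_r^w,f(r)]=f'(r)$.

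With these two ingredients the computation of the right-hand side is mechanical. Applying the Leibniz rule to $[\partial_r^w,\M_{2N-2}(\bar{g})]$ and letting $\partial_r^w$ pass through each $A_k$, one obtains
$$[\partial_r^w,\M_{2N-2}(\bar{g})] = \sum_{k\geq N-1} A_k \frac{(N-2)!}{(k-1)!(k-N+1)!} \cdot \frac{k-N+1}{2}\, r\left(\frac{r^2}{4}\right)^{k-N}.$$
The $k=N-1$ summand vanishes because of the factor $k-N+1$, and the surviving sum (over $k\geq N$) simplifies to $\tfrac{1}{2(N-1)}\, r\,\M_{2N}(\bar{g})$ by direct comparison with the expansion \eqref{Mbexp} of $r\,\M_{2N}(\bar{g})$. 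Multiplying by $2(N-1)$ yields the asserted identity.

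The one point that requires care is the boundary case $N=2$, where $\M_{2N-2}(\bar{g})=\M_2(\bar{g})=P_2(\bar{g})$ does not fit the range $N\geq 2$ of validity of \eqref{Mbexp}. Expanding \eqref{fund} at $\eta^0$ shows that $P_2(\bar{g})$ differs from the formal $N=1$ specialization of \eqref{Mbexp} only by the extra summand $(\partial_r^w)^2$. However this term commutes with $\partial_r^w$ and therefore contributes nothing to the bracket, so the combinatorial matching goes through uniformly and recovers Theorem \ref{magic} in the case $N=2$; the explicit quadratic formula \eqref{eq:RExpansion2} gives an independent check. I expect this boundary case, together with cleanly organizing the Leibniz expansion, to be the only mildly delicate step; the main body of the argument is then a one-line index manipulation built on the commutation $[\partial_r^w,A_k]=0$ supplied by the fact that the operators $\M_{2k}(h)$ are intrinsic to the boundary metric.
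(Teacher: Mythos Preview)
Your proposal is correct and follows essentially the same route as the paper: both proofs insert the expansion \eqref{Mbexp} (and its $N-1$ analogue) into the commutator, use that the tangential operators $w^{-1}\M_{2k}(h)w$ commute with $\partial_r^w$ so that the bracket reduces to differentiating the scalar coefficients $(r^2/4)^{k-N+1}$, and handle the boundary case $N=2$ by observing that the extra $(\partial_r^w)^2$ term in $\M_2(\bar{g})$ commutes with $\partial_r^w$. Your version is in fact slightly more explicit about the commutation $[\partial_r^w,A_k]=0$ and the Leibniz step than the paper's write-up.
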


\begin{proof} Let $N \ge 3$. We use \eqref{Mbexp} and its relative
\begin{equation}\label{MBexp-2}
   \M_{2N-2}(\bar{g}(r)) = \sum_{k \ge N-1} w^{-1} \M_{2k}(h) w \frac{(N-2)!}{(k-1)!(k-N+1)!}
   \left(\frac{r^2}{4}\right)^{k-N+1}
\end{equation}
to calculate
\begin{align*}
   [\partial_r^w,\M_{2N-2}(\bar{g})] & = \partial_r^w \M_{2N-2}(\bar{g}) -
   \M_{2N-2}(\bar{g}) \partial_r^w \\
   & = w^{-1} \partial_r w \M_{2N-2}(\bar{g}) - \M_{2N-2}(\bar{g}) w^{-1} \partial_r w \\
   & = w^{-1} \partial_r \left(\sum_{k \ge N-1} \M_{2k}(h) w \frac{(N-2)!}{(k-1)!(k-N+1)!}
   \left(\frac{r^2}{4}\right)^{k-1} \cdot \right) \\
   & \qquad \qquad  -
   \sum_{k \ge N-1} w^{-1} \M_{2k}(h) \frac{(N-2)!}{(k-1)!(k-N+1)!}
   \left(\frac{r^2}{4}\right)^{k-1} \partial_r (w \cdot) \\
   & = \frac{r}{2} \sum_{k \ge N-2} w^{-1} \M_{2k}(h) w \frac{(N-2)!}{(k-1)! (k-N)!}
   \left(\frac{r^2}{4}\right)^{k-2} \partial_r \\
   & =  \frac{r}{2(N-1)} \M_{2N}(\bar{g}).
\end{align*}
This proves the assertion for $N \ge 3$. For $N=2$, the identity
\eqref{MBexp-2} is to be replaced by
$$
    \M_{2}(\bar{g}) = \sum_{k \ge 1} w^{-1} \M_{2k}(h) w \frac{1}{(k-1)!(k-1)!}
   \left(\frac{r^2}{4}\right)^{k-1} + (\partial_r^w)^2.
$$
Since the additional term $(\partial_r^w)^2$ commutes with $\partial_r^w$, the
above arguments extend to that case.
\end{proof}

Theorem \ref{magic} is the special case $N=2$ of Theorem \ref{magic-2}.

\begin{example} Let $M$ be the sphere $S^n$ with the round metric $g_{S^n}$.
Then
$$
  \bar{g}(r) = dr^2 + (1-r^2/4)^2 g_{S^n}.
$$
But $\M_2(\bar{g}) = P_2(\bar{g})$ and
$$
   \M_{2N}(\bar{g}) = (N-1)! N! (1-r^2/4)^{-N-1} P_2(g_{S^n})
$$
for $N \ge 2$. These results can be derived from the identification of the
holographic Laplacian $\H(\bar{g})$ with the generating series $\G(\bar{g})$ of
the operators $\M_{2N}(\bar{g}$). For a direct proof see \cite[Section
11.10]{J2}. Moreover, we have
$$
   \partial^w_r = \partial_r - \frac{n}{4} r (1-r^2/4)^{-1}.
$$
Hence we calculate
\begin{align*}
   2(N-1) [\partial_r^w,\M_{2N-2}(\bar{g})] & = 2(N-1)! (N-1)! \partial_r
   ((1-r^2/4)^{-N}) P_2(g_{S^n}) \\
   & = N!(N-1)! (1-r^2/4)^{-N-1} r P_2(g_{S^n}) \\
   & = r \M_{2N}(\bar{g})
\end{align*}
for $N \ge 3$. This proves \eqref{eq:help1} for $N \ge 3$. A direct calculation
also confirms the case $N=2$.
\end{example}

In turn, Theorem \ref{magic-2} leads to a simple compressed formula for the
generating series $\G(\bar{g})$ and thus for the holographic Laplacian of
$\bar{g}$. In order to formulate the result, we introduce the following
notation. Let
\begin{equation*}
    R\circ \ad(\partial_r^w)(\cdot) \st \frac{1}{r} \circ [\partial_r^w,\cdot].
\end{equation*}

\begin{theorem}\label{holo-compressed} Assume that $(M^n,h)$ is real analytic of
odd dimension $n$. Then
\begin{equation}\label{eq:NewFormulaForHL}
   \H(\bar{g})(\eta) = \G(\bar{g})\left(\frac{\eta^2}{4}\right)
   = \exp \left( R\circ \ad(\partial_r^w) \frac{\eta^2}{2} \right)(\M_{2}(\bar{g})).
\end{equation}
\end{theorem}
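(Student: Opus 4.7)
The plan is to derive the formula directly from Theorem \ref{magic-2} by iteration and then assemble the resulting expressions into the exponential series. The first equality $\H(\bar{g})(\eta) = \G(\bar{g})(\eta^2/4)$ is the identification of the holographic Laplacian with the generating series of building block operators recalled in the paragraph following \eqref{sch}, so only the second equality needs to be proved.

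First I would apply Theorem \ref{magic-2} recursively. The identity $r \M_{2N}(\bar{g}) = 2(N-1)[\partial_r^w,\M_{2N-2}(\bar{g})]$ can be rewritten as
\begin{equation*}
    \M_{2N}(\bar{g}) = 2(N-1) \, (R\circ \ad(\partial_r^w))(\M_{2N-2}(\bar{g})),
\end{equation*}
using the definition $R\circ \ad(\partial_r^w)(\cdot) = \frac{1}{r}[\partial_r^w,\cdot]$. Iterating from the base case $\M_2(\bar{g})$ gives
\begin{equation*}
    \M_{2N}(\bar{g}) = 2^{N-1}(N-1)! \, (R\circ \ad(\partial_r^w))^{N-1}(\M_2(\bar{g}))
\end{equation*}
for all $N \ge 1$ (the case $N=1$ is trivial).

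Next I would substitute this formula into the defining series for $\G(\bar{g})$, which yields
\begin{equation*}
   \G(\bar{g})\!\left(\frac{\eta^2}{4}\right) = \sum_{N\ge 1} \frac{\M_{2N}(\bar{g})}{(N-1)!^2}\left(\frac{\eta^2}{4}\right)^{N-1}
   = \sum_{N\ge 1} \frac{2^{N-1}}{(N-1)! \, 4^{N-1}} \, \eta^{2(N-1)} \, (R\circ \ad(\partial_r^w))^{N-1}(\M_2(\bar{g})).
\end{equation*}
Since $2^{N-1}/4^{N-1} = 1/2^{N-1}$, the coefficient simplifies to $\frac{1}{(N-1)!}(\eta^2/2)^{N-1}$, and the series is precisely the exponential $\exp\!\left(\frac{\eta^2}{2} R\circ \ad(\partial_r^w)\right)(\M_2(\bar{g}))$, which proves \eqref{eq:NewFormulaForHL}.

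The only potential obstacle is a careful bookkeeping of the factorial and power-of-two factors, since the definition of $\G$ uses $(N-1)!^2$ in the denominator while the iterated commutator produces a single $(N-1)!$ with $2^{N-1}$, and the substitution $\rho = \eta^2/4$ supplies the compensating $4^{-(N-1)}$. Beyond this routine simplification, the argument is essentially immediate from Theorem \ref{magic-2}; no analytic convergence issue arises since the expansion is treated as a formal power series in $\eta$ (or equivalently as a Taylor series of the analytic operator family).
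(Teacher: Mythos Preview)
Your proposal is correct and follows essentially the same approach as the paper: iterate Theorem \ref{magic-2} to obtain $\M_{2N}(\bar{g}) = 2^{N-1}(N-1)!\,(R\circ\ad(\partial_r^w))^{N-1}(\M_2(\bar{g}))$, substitute into the generating series $\G(\bar{g})(\eta^2/4)$, and simplify the factorials and powers of two to recognize the exponential. The paper's proof is identical in structure and detail.
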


\begin{proof} By a repeated application of the identity \eqref{eq:help1} we obtain
\begin{equation*}
   \M_{2N}(\bar{g})=2^{N-1}(N-1)! (R\circ \ad(\partial_r^w))^{N-1}(\M_{2}(\bar{g}))
\end{equation*}
for $N \ge 2$. Using the natural convention $(R\circ \ad(\partial_r^w))^0 =
\id$, the latter identity also holds for $N=1$. Hence
\begin{align*}
   \H(\bar{g})(\eta) & = \sum_{N\geq 1}\M_{2N}(\bar{g})
   \frac{1}{(N-1)!^2}\left(\frac{\eta^2}{4}\right)^{N-1}\\
   & = \sum_{N\geq 1} (R\circ \ad(\partial_r^w))^{N-1}(\M_{2}(\bar{g}))
   \frac{1}{(N-1)!}\left(\frac{\eta^2}{2}\right)^{N-1}\\
   & = \sum_{N\geq 0} (R \circ \ad(\partial_r^w))^{N}(\M_{2}(\bar{g}))
   \frac{1}{N!} \left(\frac{\eta^2}{2}\right)^{N}\\
   & = \exp \left(R\circ \ad(\partial_r^w)\frac{\eta^2}{2}\right)(\M_{2}(\bar{g})).
\end{align*}
This completes the proof.
\end{proof}

Theorem \ref{holo-compressed} again clearly shows that $\H(\bar{g})(\eta)$ is a
deformation of $\M_2(\bar{g}) = P_2(\bar{g})$.

We finish with brief comments on generalizations of the present theory to
differential forms. The theory of differential symmetry breaking operators on
functions has a natural extension to differential forms \cite{FJS,KKP}. Curved
versions of that theory deal with residue families acting on differential forms.
Their theory will be developed elsewhere. Residue families on differential forms are
expected to have analogous descriptions in terms of compositions of shift operators
on differential forms. Results in \cite{FOS} confirm that picture in the flat case.
Curved analogs of the degenerate Laplacian acting on differential forms were studied
in \cite{GLW} in terms of tractor calculus.


\end{document}